\let\@secnumfont\bfseries
\def\section{\@startsection{section}{1}%
  \z@{4\linespacing\@plus\linespacing}{\linespacing}%
  {\bfseries\centering}}
\def\introsection{\@startsection{section}{1}%
  \z@{3\linespacing\@plus\linespacing}{\linespacing}%
  {\bfseries\centering}}
\def\subsection{\@startsection{subsection}{2}%
   \z@{1.25\linespacing\@plus.7\linespacing}{.5\linespacing}%
   {\normalfont\bfseries}}
\def\subsectionsinline{\def\subsection{\@startsection{subsection}{2}%
  \z@{1\linespacing\@plus.7\linespacing}{-.5em}%
  {\normalfont\bfseries}}}
\theoremstyle{definition}
\newtheorem{definition}[equation]{Definition}
\newtheorem{example}[equation]{Example}
\newtheorem{problem}[equation]{Problem}
\newtheorem*{definition*}{Definition}
\newtheorem*{example*}{Example}
\newtheorem*{problem*}{Problem}
\newtheorem*{exercise*}{Exercise}
\newtheorem*{question*}{Question}
\newtheorem*{construction*}{Construction}
\theoremstyle{remark}
\newtheorem{remark}[equation]{Remark}
\newtheorem*{note*}{Note}
\newtheorem*{notation*}{Notation}
\newtheorem*{remark*}{Remark}
\newtheorem*{data*}{Data}
\theoremstyle{plain}
\newtheorem{theorem}[equation]{Theorem}
\newtheorem{corollary}[equation]{Corollary}
\newtheorem{lemma}[equation]{Lemma}
\newtheorem{proposition}[equation]{Proposition}
\newtheorem*{theorem*}{Theorem}
\newtheorem*{corollary*}{Corollary}
\newtheorem*{lemma*}{Lemma}
\newtheorem*{proposition*}{Proposition}
\newtheorem*{conjecture*}{Conjecture}
\newtheorem*{claim*}{Claim}
\newtheorem*{proposal*}{Proposal}
\newtheorem*{conclusion*}{Conclusion}
\newtheorem*{hypothesis*}{Hypothesis}
\numberwithin{equation}{section}
\definecolor{refkey}{rgb}{0,.6,.4}
\renewcommand{\:}{\colon}
\newcommand{\CC}{{\mathbb C}}
\newcommand{\EE}{\mathbb E}
\DeclareMathOperator{\End}{End}
\DeclareMathOperator{\Hom}{Hom}
\DeclareMathOperator{\id}{id}
\DeclareMathOperator{\Map}{Map}
\newcommand{\PP}{{\mathbb P}}
\DeclareMathOperator{\pt}{pt}
\newcommand{\RP}{{\mathbb R\mathbb P}}
\newcommand{\RR}{{\mathbb R}}
\newcommand{\TT}{\mathbb T}
\newcommand{\ZZ}{{\mathbb Z}}
\newcommand{\chiup}{\raise.5ex\hbox{$\chi$}}
\newcommand{\cir}{S^1}
\newcommand{\inv}{^{-1}}
\newcommand{\mstrut}{^{\vphantom{1*\prime y\vee M}}}
\newcommand{\res}[1]{\negmedspace\bigm|\mstrut_{#1}}
\newcommand{\temsquare}{\raise3.5pt\hbox{\boxed{ }}}
\newcommand{\zmod}[1]{\ZZ/#1\ZZ}
\newcommand{\zt}{\zmod2}
\renewcommand{\cir}{\ensuremath{S^1}}
\newcommand{\vect}{\mathbf{Vect}}
\DeclareMathOperator*{\colim}{colim}
\DeclareMathOperator{\Ad}{Ad}
\DeclareMathOperator{\Koss}{Kos}
\DeclareMathOperator{\Lie}{Lie}
\DeclareMathOperator{\Open}{Open}
\DeclareMathOperator{\Sing}{Sing}
\DeclareMathOperator{\Sym}{Sym}
\DeclareMathOperator{\ho}{ho}
\DeclareMathOperator{\hypercover}{\mathbf{Hypercov}}
\DeclareMathOperator{\manifolds}{\mathbf{Man}}
\DeclareMathOperator{\presheaves}{\mathbf{Pre}}
\DeclareMathOperator{\sheaves}{\mathbf{Sh}}
\DeclareMathOperator{\spre}{\mathbf{sPre}}
\newcommand{\BNGt}{B_{\nabla }^{\textnormal{triv}}G}
\newcommand{\BNG}{B^{}_{\nabla }G}
\newcommand{\Bqt}{(X_G)_\nabla }
\newcommand{\ENG}{E^{}_{\nabla }G}
\newcommand{\Grp}{\mathbf{Groupoid}}
\newcommand{\Kos}{\Koss ^{\bullet }}
\newcommand{\Man}{\mathbf{Man}}
\newcommand{\OtV}{\Omega ^1\otimes V}
\newcommand{\Otg}{\Omega ^1\otimes \mathfrak{g}}
\newcommand{\SSet}{\Set_\Delta }
\newcommand{\Set}{\mathbf{Set}}
\newcommand{\Spc}{\mathbf{Space}}
\newcommand{\TW}{T\mstrut _{W}}
\newcommand{\Tens}{\otimes }
\newcommand{\Tu}{\Theta^{\textnormal{univ}}}
\newcommand{\Vect}{\mathbf{Vect}}
\newcommand{\abelian}{\mathbf{Ab}}
\newcommand{\ass}{\mathbf{a}}
\newcommand{\bX}{X_{\bullet }}
\newcommand{\bas}{_{\textnormal{bas}}}
\newcommand{\bx}{\bar\xi }
\newcommand{\cat}[1]{\mathcal{#1}}
\newcommand{\chain}[1]{\mathbf{Chain}_{#1}}
\newcommand{\df}[1]{\Omega ^{\bullet }(#1)}
\newcommand{\hx}{\hat\xi }
\newcommand{\omo}{\boldsymbol{\omega }^1}
\newcommand{\omq}{\boldsymbol{\omega }^q}
\newcommand{\op}{^{\textnormal op}}
\newcommand{\sFX}{\sF_X}
\newcommand{\sF}{\mathcal{F}}
\newcommand{\sG}{\mathcal{G}}
\newcommand{\sL}{\mathcal{L}}
\newcommand{\sP}{\mathcal{P}}
\newcommand{\sS}{\mathcal{S}}
\newcommand{\sU}{\mathcal{U}}
\newcommand{\sheaf}[1]{\mathcal{#1}}
\newcommand{\stF}{\widetilde{\sF}}
\newcommand{\tP}{\tilde{P} }
\newcommand{\tT}{\tilde{\Theta } }
\newcommand{\ta}{\tilde\alpha }
\newcommand{\tb}{\tilde\beta }
\newcommand{\tf}{\tilde\phi }
\newcommand{\tih}{\tilde{h}}
\newcommand{\tla}{\tilde{\lambda }}
\newcommand{\tl}{\tilde{\ell }}
\newcommand{\tp}{\tilde\pi }
\newcommand{\ts}{\tilde{s}}
\newcommand{\tv}{\tilde{v}}
\newcommand{\tw}{\tilde\omega }
\newcommand{\weq}{\mathcal W}
\renewcommand{\AA}{\mathbb{A}}
\newcommand{\slot}{\,-\,}
\begin{document}

\abovedisplayskip18pt plus4.5pt minus9pt
\belowdisplayskip \abovedisplayskip
\abovedisplayshortskip0pt plus4.5pt
\belowdisplayshortskip10.5pt plus4.5pt minus6pt
\baselineskip=15 truept
\marginparwidth=55pt

\renewcommand{\labelenumi}{\textnormal{(\roman{enumi})}}
\setcounter{tocdepth}{1}

%**end of header

% lasteq@153
% lastsec@ 11
% lastthm@ 78
% lastfig@000

 \title[Chern-Weil Forms and Abstract Homotopy Theory]{Chern-Weil Forms and Abstract Homotopy Theory} %% replace \today with short title in final version
 \author[D. S. Freed]{Daniel S.~Freed}
 \thanks{The work of D.S.F. is supported by the National Science Foundation
under grants DMS-0603964, DMS-1207817, and DMS-1160461}
 \address{Department of Mathematics \\ University of Texas \\ Austin, TX
78712} 
 \email{dafr@math.utexas.edu}
 \author[M. J. Hopkins]{Michael J.~Hopkins}
 \thanks{The work of M.J.H. is supported by the National Science Foundation
under grants DMS-0906194,
DMS-0757293, DMS-1158983}
 \address{Department of Mathematics \\ Harvard University \\ Cambridge, MA 02138}
 \email{mjh@math.harvard.edu}
 \dedicatory{In memory of Dan Quillen}
 \date{March 14, 2013}
 \begin{abstract} 
 We prove that Chern-Weil forms are the only natural differential forms
associated to a connection on a principal $G$-bundle. We use the homotopy
theory of simplicial sheaves on smooth manifolds to formulate the theorem and
set up the proof. Other arguments come from classical invariant theory. We
identify the Weil algebra as the de Rham complex of a specific simplicial
sheaf, and similarly give a new interpretation of the Weil model in
equivariant de Rham theory. There is an appendix proving a general theorem
about set-theoretic transformations of polynomial functors. 
 \end{abstract}
\maketitle

   \section{Introduction}\label{sec:1}
% lastsubsec@000

Invariant theory was studied in the $19^{\textnormal{th}}$~century in the
context of linear representations of algebraic groups.  Given a group~$G$ and
a linear representation~$V$, one seeks polynomials on~$V$ which are invariant
under the action of~$G$.  At around the same time Felix Klein formulated his
\emph{Erlanger Programm}~\cite{K} which, very roughly, defines geometric
concepts as those invariant under a given symmetry group.  For example,
classical Euclidean geometry studies invariants under the Euclidean group of
symmetries of the Euclidean plane~$\EE^2$.  The invariants are no longer
polynomials, but may be a numerical invariant of pairs of points (length), of
triples of points (the isometry class of a triangle), of a polygon (e.g., the
area enclosed), etc.  A broader interpretation of Klein's vision formulates
Riemannian geometry as the study of invariants of Riemannian manifolds under
isometries.  Categorical language enables a precise formulation: there is a
category whose objects are Riemannian manifolds and whose morphisms are
isometries; invariants are functors mapping out of this category, or out of
closely related ones.

The problem we investigate here asks for invariants of principal $G$-bundles
with connection over smooth manifolds, where $G$~is a fixed Lie group.
Specifically, the invariants we seek are differential forms.  Long ago Chern
and Weil showed that conjugation-invariant polynomials on the Lie
algebra~$\mathfrak{g}$ define invariant differential forms.  Our main result
(Theorem~\ref{thm:a2}) is that these are the \emph{only} natural differential
forms one can construct from a $G$-connection.  A similar invariant theory
question was crucial in the initial heat equation approach to the
Atiyah-Singer index theorem as carried out by Gilkey~\cite{G}; see
also~\cite{ABP}.
 
Our focus is not only this specific theorem,\footnote{Some version of this
theorem may already be known, but we could not find a reference.  One novelty
may be Lemma~\ref{thm:a4}; we do not need to assume that the forms we
consider are local functions of the connection---we prove it.} but also the
context we lay out to formulate and prove it.  The ``invariance'' here is
under symmetries of $G$-connections as well as smooth maps of manifolds, so
we need a framework which tracks both.  From another point of view we seek a
universal $G$-connection such that any connection on a principal $G$-bundle
$P\to M$ is pulled back from the universal one.  Universal connections have
been constructed~\cite{NR,Sch} on infinite dimensional
manifolds---see~\cite{Ku,R,DHZ} for further studies---but they have the
drawback that classifying maps are not unique.
In~\S\S\ref{sec:2}--\ref{sec:5} we take the reader on a journey that begins
in an elementary way with these traditional universal objects and leads to
certain ``generalized manifolds'': simplicial sheaves on the category of
smooth manifolds.  We construct a simplicial sheaf\footnote{pronounced ``$B$
nabla $G$''.  The idea of considering $\ENG\to\BNG$ as a universal principal
$G$-bundle with connection surely dates back at least to the early 1970s and
to ideas implicit in~\cite{Br,D}.  We could not find an explicit reference
from that era, however.  The construction does appear in the much more
recent~\cite{FSS}.}~$\BNG$ of $G$-connections and a discrete simplicial sheaf
~$\Omega ^{\bullet }$ of differential forms.  The precise version of our
question becomes a computation: Compute all maps $\BNG\to\Omega ^{\bullet }$.
The actual computation is in~\S\ref{sec:6} and~\S\ref{sec:7}, where we prove
(Theorem~\ref{thm:a2}) that the classical construction of Chern and Weil
captures all differential forms naturally associated to a $G$-connection.
Once the framework is set up, the computation involves only ideas from
differential geometry and invariant theory: no simplicial sheaves.  One piece
of the invariant theory---a proof that set-theoretic transformations of
polynomial functors are polynomial---may be of independent interest; it is
worked out in the appendix.
 
Our work is a new take on Chern-Weil theory and equivariant de Rham theory.
In the world of simplicial sheaves we define the total space~$\ENG$ of the
universal bundle with connection.  We prove (Theorem~\ref{thm:32}) that its
de Rham complex is the \emph{Weil algebra}, the star character in H. Cartan's
treatment~\cite{C1,C2} of Chern-Weil theory.  Traditionally, the Weil algebra
is used as a finite dimensional \emph{model} of the infinite
dimensional de Rham complex of a Hilbert manifold model of~$EG$, or a finite
dimensional approximation thereof.  Here we offer a geometric interpretation
of the Weil algebra as \emph{precisely} the de Rham complex of~$\ENG$.  We
also prove a generalization.  For a $G$-manifold~$X$ we define a version of
the Borel quotient using~$\ENG$ and prove (Theorem~\ref{thm:34}) that the de
Rham complex of this simplicial Borel quotient is precisely the Weil model in
equivariant de Rham theory.

A crucial ingredient in our discussion is abstract homotopy theory, which we
describe in~\S\ref{sec:10}.  Were we to only discuss differential forms, even
in their incarnation as the sheaf of sets~$\Omega ^{\bullet }$, there would
be no need for homotopy theory.  But principal bundles have automorphisms,
usually called gauge transformations, and a fixed $G$-connection may be
stabilized by a nontrivial subgroup of gauge transformations.  So these
objects may appear in several equivalent forms.  We can describe a $G$-bundle
directly as a certain type of fiber bundle~ $P\to X$, or alternatively we can
specify it by an open cover of the manifold~$X$ and transition functions.
There are similar alternative descriptions of a $G$-connection.  Abstract
homotopy theory provides a mechanism for systematically identifying these
alternatives.
 
In recent years abstract homotopy theory has had a profound impact on various
parts of algebraic geometry, as well as on low-dimensional topology.  Here we
use abstract homotopy theory in differential geometry.  There are closely
related contexts in which abstract homotopy theory also plays a crucial role.
For example, generalized differential cohomology groups~\cite{HS} are most
naturally defined in this world; see~\cite{Bu} for a recent exposition.  We
remark that they generalize Cheeger-Simons cohomology groups~\cite{ChS}, which
in turn refine Chern-Weil theory.  Abstract homotopy theory also lies at the
foundation of derived differential geometry~\cite{Sp,Joy}.  In a different
direction, simplicial sheaves provide a good framework in which to define a
general notion of a ``field'' in the sense of classical and quantum field
theory~\cite[Appendix]{FT}.

We offer this paper as a tribute to Dan Quillen.  He introduced abstract
homotopy theory in~\cite{Q1,Q2}, and he also wrote about Chern-Weil theory
in~\cite{Q3,MQ}.  Dan was an exceptionally clear and elegant mathematical
thinker.  He leaves behind a legacy of profound and powerful mathematics
which will continue to inspire for a very long time.

\tableofcontents

   \section{What is a universal connection?}\label{sec:2}
% lastsubsec@000

In this section we motivate the question ``What is a universal connection?'',
for which we begin with some topological analogues.  Let $\Sigma $~be a compact
surface with no boundary.  Its simplest topological invariant is the Euler
number~$\chi (\Sigma )\in \ZZ$, defined for a triangulation of~$\Sigma $ by
Euler's famous formula $\chi (\Sigma )=V-E+F$, where $V$~is the number of
vertices, $E$~the number of edges, and $F$~the number of faces.  For our
purposes we replace this combinatorial definition with one based on a smooth
structure.  We focus on the tangent bundle $\pi \:T\Sigma \to \Sigma $, the
linearization of~$\Sigma $ which assigns to each point $p\in \Sigma $ the
two-dimensional tangent space~$\pi \inv (p)=T_p\Sigma $.  The tangent
bundle~$T\Sigma $ is the union of these two-dimensional real vector spaces,
collected into a smooth manifold.  The Euler number measures the ``twisting''
of the tangent spaces as $p$~varies over~$\Sigma $.  One qualitative
indication that there is twisting for~$\Sigma =S^2$ is the \emph{hairy ball
theorem}, which states that there is no smooth nonzero vector field on the
2-sphere: every hairy sphere has a bald spot!  If there were no twisting,
then we could identify each tangent space with the standard 2-dimensional
real vector space~$\RR^2$, and then promote a nonzero vector at a single
point to a nonzero global vector field.

To obtain a quantitative measurement of the twisting of $T\Sigma \to\Sigma $
we ask: What is the maximally twisted smooth family of two-dimensional real
vector spaces?  One source of twisted smooth families are
\emph{Grassmannians}: any real vector space~ $W$ determines the Grassmannian
~$Gr_2(W)$ of 2-planes in~$W$, named after the $19^{\textnormal{th}}$~century
German mathematician and high school teacher Hermann Grassmann.  The disjoint
union of the two-planes is the total space of a vector bundle $\pi \:E(W)\to
Gr_2(W)$.  It turns out that every two-dimensional real vector bundle is
pulled back from this one as long as we take~$W$ to be \emph{infinite}
dimensional.  Therefore, our quest for a universal real vector bundle of rank
two takes us outside the world of \emph{finite} dimensional manifolds.
Furthermore, even if we allow infinite dimensional manifolds the classifying
manifold is \emph{not} unique.  For example, we can take $W=\RR^{\infty}$
with a direct limit topology and similarly topologize $Gr_2(\RR^{\infty})$.
Or we can take~$W$ to be an infinite dimensional real Hilbert space and
correspondingly construct a Hilbert manifold~$Gr_2(W)$.  There is no
canonical choice for the classifying space.  Rather, there is a theorem in
topology that any two choices are \emph{homotopy equivalent}.  We return to
this issue in~\S\ref{sec:4}.  For now we observe that the cohomology of a
universal parametrizing space~$Gr_2(W)$ is independent of the choice, and for
any choice we locate a universal Euler class ~$\chi \in H^2\bigl(Gr_2(W)
\bigr)$.\footnote{More precisely, the universal Euler class lies in
cohomology twisted by a canonical local system constructed from the
orientations of the two-planes in~$W$.}  Then the universality expresses the
tangent bundle to~$\Sigma $ as a \emph{pullback}
  \begin{equation}\label{eq:1}
  \begin{split}
     \xymatrix{T\Sigma \ar[r]^<(.3){\tilde f}\ar[d] & E(W)\ar[d] \\ \Sigma
     \ar[r]^<(.3)f & Gr_2(W)}
  \end{split}
  \end{equation}
and the Euler number of~$\Sigma $ is the value of~$f^*\chi $ on the
fundamental class of~$\Sigma $---the ``integral'' of~$f^*\chi $ over~$\Sigma
$.  (The pullback property means that $\tilde f$~is an isomorphism
from~$T_p\Sigma $ to the vector space labeled by~$f(p)$, for each~$p\in
\Sigma $.) Not only is $Gr_2(W)$ not unique, but the classifying map~$f$ is
also not unique, though any two are homotopic.

The Euler class of a two-plane bundle is the first example of a
\emph{characteristic class}~\cite{MS}.  More generally, for
any\footnote{Throughout we assume $G$~has finitely many components.} Lie
group~ $G$ we consider a \emph{principal $G$-bundle} $\pi \:P\to M$ over a
smooth manifold~$M$.  By definition $G$~acts freely on the manifold~$P$ with
quotient map~$\pi $, and the action admits local slices.  The principal
bundle associated to $T\Sigma \to\Sigma $ has $G=GL_2\RR$ and the total space
is the set of isomorphisms $\RR^2\to T_p\Sigma $ for all~$p\in \Sigma $.
There are again infinite dimensional universal bundles $EG\to BG$, unique up
to homotopy, and elements of~$H^{\bullet }(BG)$ are universal topological
invariants of principal $G$-bundles.  Characteristic classes are the solution
to a 2-step problem: Find a universal $G$-bundle and compute its cohomology.
 
The problem we consider in this paper is to construct ``differential
geometric characteristic classes''.  To motivate it let's return to our
smooth surface~$\Sigma $ and now suppose it is endowed with a Riemannian
metric~$g$.  The differential geometers of the $18^{\textnormal{th}}$~ and
$19^{\textnormal{th}}$~centuries studied the concrete case of a
surface~$\Sigma \subset \EE^3$ embedded in Euclidean 3-space with the induced
metric.  Befitting the local nature of the metric we now ask not for global
measurements of \emph{topological} twisting, but rather for local
measurements of \emph{geometric} twisting.  Gauss' \emph{Theorema Egregium}
provides a single function $K_g\:\Sigma \to\RR$, the \emph{Gauss curvature},
which is an invariant of the metric and measures the deviation from flatness.
We remark that there is also a canonical measure~$dA_g$ constructed from the
metric---from lengths and angles we compute areas---and the Gauss-Bonnet
theorem asserts that the integral $\frac{1}{2\pi }\int_{\Sigma }K_g\,dA_g$
equals the Euler number~$\chi (\Sigma )$.
 
The curvature is a combination of second derivatives of the metric.  The
Italian school in the late $19^{\textnormal{th}}$~century constructed a new
geometric object constructed from first derivatives, the \emph{Levi-Civita
connection}.  In the $20^{\textnormal{th}}$~century connections were
recognized to have independent interest, and at mid-century Charles
Ehresmann~\cite{E} formulated the notion of a connection~$\Theta $ on a
principal $G$-bundle $P\to M$.  Let $\mathfrak{g}$ denote the Lie algebra
of~$G$.  A \emph{connection} on~$P$ is a 1-form $\Theta \in \Omega
^1(P;\mathfrak{g})$ which satisfies two conditions: (i)~the restriction
of~$\Theta $ to each fiber is the Maurer-Cartan form~$\theta \in \Omega
^1(G;\mathfrak{g})$; and (ii)~if $R_g\:P\to P$ denotes the action of~$g\in
G$, then $R_g^*\Theta =\Ad_{g\inv }\Theta $.  Our problem is to construct
local invariants of connections.  A natural home for these invariants is the
generalization of functions: \emph{differential forms}.  Recall that for any
manifold~$M$ differential forms~$\df M$ have a differential~$d$ which defines
the \emph{de Rham complex}
  \begin{equation}\label{eq:2} 
     \Omega ^0(M) \xrightarrow{\;\; d\;\;} \Omega ^1(M)\xrightarrow{\;\;
     d\;\;}\Omega ^2(M)\xrightarrow{\;\;d\;\;}\cdots 
  \end{equation}

We are led, then, to the following two-step problem: 

  \begin{problem}[]\label{thm:1}
 Construct a universal $G$-connection on a principal $G$-bundle $\ENG\to\BNG$.
  \end{problem}

  \begin{problem}[]\label{thm:2}
 Compute the de Rham complex of ~$\BNG$.
  \end{problem}

\noindent
 The idea of the universal connection is that any connection~$\Theta $ on a
bundle $P\to M$ is pulled back from a map $M\to\BNG$, analogously
to~\eqref{eq:1}.  Once we seek a universal $G$-connection it is natural to
seek a universal de Rham complex as well. 

  \begin{problem}[]\label{thm:3}
 Construct a universal space of differential forms~$\Omega ^{\bullet }$ and
universal de Rham complex. 
  \end{problem}

\noindent
 If we in addition impose \emph{uniqueness} of the classifying map of a
differential form, then we seek some object~$\Omega ^{\bullet }$ such that
for any smooth manifold~$M$ the space of maps $M\to\Omega ^{\bullet }$ is
  \begin{equation}\label{eq:3}
    \Map(M,\Omega ^{\bullet })= \Omega ^{\bullet }(M) .
  \end{equation}
Just as classifying spaces of two-plane bundles and of principal $G$-bundles
are not finite dimensional manifolds, but rather are infinite dimensional
manifolds, so too we construct $\Omega ^{\bullet }$~and $\BNG$~ as
``generalized manifolds''.  We introduce that generalization and solve
Problem~\ref{thm:3} in the next section.  By contrast $\BNG$~is not as rigid
as~$\Omega ^{\bullet }$, hence we defer its construction, which requires
homotopy theory, to~\S\ref{sec:5}.  The solution to Problem~\ref{thm:2} is
stated as Theorem~\ref{thm:a2} and proved in~\S\ref{sec:7}.

   \section{Presheaves and sheaves on manifolds}\label{sec:3}
% lastsubsec@000

Let $\Man$ denote the category whose objects are smooth finite dimensional
manifolds and whose morphisms are smooth maps between manifolds.  The right
hand side of~\eqref{eq:3} is a set attached to each smooth manifold~$M$.
Furthermore, if $f\:M'\to M$ is a smooth map, then there is a pullback map of
differential forms 
  \begin{equation}\label{eq:4}
     \df{M'}\xleftarrow{\;\;\;f^*\;\;}\df M 
  \end{equation}
and the pullback of the composition of two maps is the composition of the
pullbacks.  Let $\Set$~denote the category of sets.  We summarize the
structural properties of differential forms by the statement that
  \begin{equation}\label{eq:5}
     \begin{aligned} \Omega ^{\bullet }\: \Man\op&\longrightarrow \Set \\
      M&\longmapsto \df M\end{aligned} 
  \end{equation}
is a functor.  Here $\op$ denotes the \emph{opposite} category in which all
arrows are reversed: differential forms pull back.

  \begin{definition}[]\label{thm:4}
 A \emph{presheaf on manifolds} is a functor $\Man\op\to\Set$. 
  \end{definition}

\noindent
 In this context we view~$M$ as a ``test manifold'' on which we evaluate the
presheaf.  The presheaf itself is to be considered as a new geometric object
which generalizes a manifold.  To justify that point of view we must first see
that manifolds may be regarded as presheaves.  Let $X$~be a smooth finite
dimensional manifold, and define the associated presheaf~$\sFX$
  \begin{equation}\label{eq:6}
     \begin{aligned} \sFX\:\Man\op&\longrightarrow \Set \\ M &\longmapsto
      \Man(M,X)\end{aligned} 
  \end{equation}
To a test manifold~$M$ this presheaf assigns the set of all smooth maps $M\to
X$, the set of maps from~$M$ to~$X$ in the category~$\Man$.  Throughout we
use standard constructions and notations in categories, for example~
`$C(X,Y)$' for the set of morphisms $X\to Y$ in the category~$C$.

  \begin{remark}[]\label{thm:52}
 The notion of a presheaf is more familiar over a fixed manifold~$X$.  A
presheaf over~$X$ assigns a set to each open set in~$X$ and there are
coherent restriction maps, so it may be viewed as a functor 
  \begin{equation}\label{eq:66}
     \Open(X)\op\longrightarrow \Set
  \end{equation}
on the category whose objects are open subsets of~$X$ and whose morphisms are
inclusions of open sets.  A good general reference on presheaves and sheaves
is~\cite{MM}. 
  \end{remark}

If presheaves on manifolds are meant to generalize manifolds, then we must be
able to do geometry with presheaves, and to begin we define maps between
presheaves, so a category~$\presheaves$ of presheaves.

  \begin{definition}[]\label{thm:5}
 Let $\sF',\sF$ be presheaves on manifolds.  Then a \emph{map} $\varphi
\:\sF'\to\sF$ is a natural transformation of functors.  Thus for each test
manifold~$M$ there is a map $\sF'(M)\xrightarrow{\varphi (M)}\sF(M)$ of sets
such that for every smooth map $M'\xrightarrow{f}M$ of test manifolds the
diagram
  \begin{equation}\label{eq:7}
  \begin{split}
     \xymatrix{\sF'(M')\ar[d]_{\varphi (M')}
     &\sF'(M)\ar[l]_<(.2){\sF'(f)}\ar[d]^{\varphi 
     (M)} \\ \sF(M') &\sF(M)\ar[l]_<(.25){\sF(f)}} 
  \end{split}
  \end{equation}
commutes. 
  \end{definition}

\noindent
 This definition has the nice feature that if the domain presheaf~$\sF'$ is
that of a smooth manifold~$X$, as in~\eqref{eq:6}, then we use~$M=X$ as a
test manifold and so determine $\varphi \:\sFX\to \sF$ by its value
on~$\id_X\in \sFX(X)$, which is an element~$\varphi (\id_X)$ of the
set~$\sF(X)$.  More formally, we have the following.

  \begin{lemma}[Yoneda]\label{thm:6}
 For any presheaf~$\sF$, evaluation on~$X$ determines an isomorphism\newline
$\presheaves(\sFX,\sF)\cong \sF(X)$.
  \end{lemma}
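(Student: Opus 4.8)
The plan is to prove the Yoneda lemma in the standard way, constructing the isomorphism $\presheaves(\sFX,\sF)\cong \sF(X)$ explicitly and checking it is a bijection of sets (natural in both variables, though the statement only asks for the bijection). First I would define the forward map $\Phi\:\presheaves(\sFX,\sF)\to \sF(X)$ by sending a natural transformation $\varphi\:\sFX\to\sF$ to $\varphi(X)(\id_X)\in\sF(X)$, using that $\id_X\in\Man(X,X)=\sFX(X)$. In the other direction, given an element $a\in\sF(X)$, I would define a natural transformation $\Psi(a)\:\sFX\to\sF$ whose component at a test manifold~$M$ is the map $\Man(M,X)\to\sF(M)$ sending a smooth map $g\:M\to X$ to $\sF(g)(a)$; here $\sF(g)\:\sF(X)\to\sF(M)$ is the pullback furnished by the presheaf structure of~$\sF$ (recall $\sF$ is a functor on~$\Man\op$).

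The key steps, in order, are: (1)~check that $\Psi(a)$ is genuinely a natural transformation, i.e.\ that for every $f\:M'\to M$ the square \eqref{eq:7} commutes for $\sF'=\sFX$ and $\varphi=\Psi(a)$; this reduces to the identity $\sF(g\circ f)=\sF(f)\circ\sF(g)$, which is exactly functoriality of~$\sF$ applied to the composite $M'\xrightarrow{f}M\xrightarrow{g}X$, together with the fact that $\sFX(f)$ sends $g\mapsto g\circ f$. (2)~Verify $\Phi\circ\Psi=\id$: given $a\in\sF(X)$, we have $\Phi(\Psi(a))=\Psi(a)(X)(\id_X)=\sF(\id_X)(a)=a$, using that $\sF$ sends identities to identities. (3)~Verify $\Psi\circ\Phi=\id$: given $\varphi\:\sFX\to\sF$ with $a:=\varphi(X)(\id_X)$, I must show $\Psi(a)=\varphi$, i.e.\ that for every $M$ and every $g\in\Man(M,X)$ we have $\sF(g)(a)=\varphi(M)(g)$. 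This is where naturality of~$\varphi$ itself is used: apply the commuting square \eqref{eq:7} to the map $g\:M\to X$ (so $M'=M$, $M=X$ in the notation of the square, with $f=g$), chasing $\id_X$ around both ways: one route gives $\varphi(M)(\sFX(g)(\id_X))=\varphi(M)(g)$, the other gives $\sF(g)(\varphi(X)(\id_X))=\sF(g)(a)$.

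None of these steps is genuinely hard; the mild subtlety, and the only thing requiring care, is bookkeeping the variance—$\sFX$ and $\sF$ are contravariant functors, so $\sFX(f)$ is precomposition $g\mapsto g\circ f$ while $\sF(f)$ is the pullback $f^*$, and the square \eqref{eq:7} has its horizontal arrows pointing "backwards" accordingly. I would state the construction carefully enough that the reader sees \eqref{eq:7} being instantiated correctly in step~(3). I would also remark that the bijection is natural in both $\sF$ (obvious from the construction) and $X$, and note the special case already highlighted in the text: a map out of the representable presheaf $\sFX$ is determined by where it sends $\id_X$, which is precisely the content that makes $\Phi$ injective. One can also phrase the whole argument diagrammatically, but the element-chasing version above is shortest and is self-contained given only Definitions~\ref{thm:4} and~\ref{thm:5}.
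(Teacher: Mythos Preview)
Your proof is correct and is the standard argument; it expands exactly the idea the paper indicates just before the lemma, namely that a map $\varphi\:\sFX\to\sF$ is determined by~$\varphi(X)(\id_X)$. The paper does not give a proof beyond that remark, so your detailed verification of both directions is entirely in line with what is sketched there.
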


\noindent 
 Here `$\presheaves(\sFX,\sF)$' denotes the set of maps in the category of
presheaves introduced in Definition~\ref{thm:5}.  Because of
Lemma~\ref{thm:6} for any presheaf~$\sF$ we sometimes write an element
of~$\sF(X)$ as a map $X\to \sF$.

  \begin{remark}[]\label{thm:7}
 It is important to observe that smoothness is encoded in the
presheaf~$\sFX$, even though the values of~$\sFX$ are sets with no additional
structure.  For example, a special case of Lemma~\ref{thm:6} is that for any
smooth manifolds~$X,Y$
  \begin{equation}\label{eq:8}
     \presheaves(\sFX,\sF_Y)\cong \sF_Y(X) = \Man(X,Y).
  \end{equation}
In other words, maps $\sFX\to\sF_Y$ of presheaves are precisely \emph{smooth}
maps $X\to Y$ of manifolds.  
  \end{remark}

  \begin{remark}[]\label{thm:71}
 What appears in~\eqref{eq:8} are discrete sets, but the construction
actually remembers much more.  For if $S$~is any smooth manifold, then the
set of smooth maps from $S$~into the function space of maps $X\to Y$ is
$\Man(S\times X,Y)$; see Example~\ref{thm:49} below.
  \end{remark}

  \begin{remark}[]\label{thm:13}
 The map $X\mapsto \sFX$ defines a functor from~$\Man$ into the category of
presheaves on manifolds.  Then \eqref{eq:8} asserts that this functor induces
an isomorphism on Hom-sets, i.e., is ``fully faithful''.  So $\Man$~is a full
subcategory of presheaves, which expresses precisely the sense in which
presheaves are generalized manifolds.
  \end{remark}

Another consequence of Lemma~\ref{thm:6} is that for any smooth manifold~$X$,
we have 
  \begin{equation}\label{eq:9}
     \presheaves(\sF_X,\Omega ^{\bullet })\cong \df X. 
  \end{equation}
Of course, the definition~\eqref{eq:5} is rigged to make this true.  What's
more, in the world of presheaves on manifolds we can define differential
forms on the presheaf~$\Omega ^q$ for each~$q\in \ZZ^{\ge0}$.  For example,
there is a canonical $q$-form
  \begin{equation}\label{eq:10}
     \omq=\id_{\Omega ^q}\:\Omega ^q\longrightarrow \Omega ^q 
  \end{equation}
which to every test manifold~$M$ assigns the identity map on~$\Omega ^q(M)$.
The form~$\omq$ enjoys a tautological \emph{uniqueness} property: if $\omega
\in \Omega ^q(X)$, then there is a \emph{unique} map $\varphi \:\sFX\to\Omega
^q$ such that $\varphi ^*(\omq)=\omega $.  The map~$\varphi $ is defined by
$\varphi (f)=f^*\omega $ for $f\:M\to X$.  It is now straightforward to write
the universal de Rham complex
  \begin{equation}\label{eq:11}
      \quad\Omega ^0 \xrightarrow{\;\;d\;\;} \Omega ^1
     \xrightarrow{\;\;d\;\;} \Omega ^2 \xrightarrow{\;\;d\;\;} \Omega ^3
      \xrightarrow{\;\;d\;\;} \cdots  
  \end{equation}
which on a test manifold~$M$ is the de Rham complex~\eqref{eq:2} on~$M$.  The
complex~\eqref{eq:11} is the solution to Problem~\ref{thm:3}.  Again, we
emphasize that we have constructed a universal object into which there are
\emph{unique} classifying maps.

The reader may feel that we have defined away the problem with no gain.  To
dispel such illusions we retort that any presheaf on manifolds has a de Rham
complex, as illustrated here.

  \begin{theorem}[]\label{thm:8}
 The de Rham complex of~$\Omega ^1$ is isomorphic to 
  \begin{equation}\label{eq:12}
        \RR\xrightarrow{\;\;0\;\;} \RR\xrightarrow{\;\;1\;\;}
     \RR\xrightarrow{\;\;0\;\;} \RR\xrightarrow{\;\;1\;\;} \cdots 
  \end{equation}
In particular, the de Rham cohomology of~$\Omega ^1$ is  
  \begin{equation}\label{eq:13}
     H^{\bullet }_{dR}(\Omega ^1)\cong \begin{cases} \RR,&\bullet
     =0;\\0,&\bullet \not= 0.\end{cases} 
  \end{equation}
  \end{theorem}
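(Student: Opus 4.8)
The plan is to compute the de Rham complex $\df{\Omega^1}$ directly from the definition~\eqref{eq:5}, using the Yoneda-type description of forms on a presheaf. By~\eqref{eq:9} and the discussion around~\eqref{eq:10}, a $q$-form on $\Omega^1$ is a natural transformation $\omega^q_{\text{test}}\colon \Omega^1 \to \Omega^q$ of presheaves, i.e.\ for each test manifold $M$ a map $\alpha_M\colon \Omega^1(M)\to\Omega^q(M)$ natural in $M$. So first I would set up the bookkeeping: an element of $\df{\Omega^1}$ in degree $q$ is a rule assigning to each $1$-form $\theta$ on each $M$ a $q$-form $\alpha_M(\theta)$ on $M$, compatible with pullback along smooth maps $M'\to M$. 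The strategy is to pin down all such natural transformations by evaluating on universal examples — the tautological $1$-form on a test manifold that carries a single $1$-form — and then use naturality to transport the answer everywhere.

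Concretely, I would test on $M=\RR$ with coordinate $t$ and the $1$-form $dt$; more generally on $M=\RR^n$ with the family $f\,dt$ for $f\in C^\infty(\RR)$, and on $M=\RR^2$ to detect dependence on $d\theta$. Naturality under the maps $t\mapsto ct$, $t\mapsto t+c$, and under all smooth reparametrizations of the line forces $\alpha$ to be built pointwise and algebraically out of $\theta$ and $d\theta$ — this is the place where I expect to invoke, or re-prove in this baby case, the kind of ``set-theoretic natural transformations are algebraic'' principle the paper flags as its appendix result. Once $\alpha$ is known to be a universal polynomial expression in $\theta$ and $d\theta$, degree counting kills almost everything: $\theta$ has degree $1$ and $d\theta$ has degree $2$, and any monomial $\theta\wedge d\theta$ or higher already involves a wedge of a $1$-form with itself or of $d\theta$ with itself when we only have a single $1$-form in play. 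The surviving possibilities are: in degree $0$, constants (giving $\RR$); in degree $1$, scalar multiples of $\theta$ (giving $\RR$); in degree $2$, scalar multiples of $d\theta$ (giving $\RR$); and nothing in higher degree — which reproduces the claimed groups $\df{\Omega^1}\cong(\RR,\RR,\RR,\dots)$.

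Having identified the underlying graded vector space, I would then compute the differential. On the generator of degree~$0$ (the constant functions, i.e.\ $\omega^0 = $ the structure map to $\Omega^0$ that is literally constant), $d$ of a constant is $0$, so the first map is $0$. On the degree~$1$ generator $\omq[1]=\id_{\Omega^1}$ one has $d\,\omq[1] = \omega^2$ under the identification $d\theta\leftrightarrow$ the degree-$2$ generator, so that map is an isomorphism, i.e.\ ``$1$''. On the degree~$2$ generator $d\theta$ we get $d(d\theta)=0$, hence the next map is $0$; and then the pattern $0,1,0,1,\dots$ repeats formally because everything in degree $\ge 3$ vanishes and $d$ on the degree-$2$ generator is exact. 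This yields precisely~\eqref{eq:12}, and the cohomology~\eqref{eq:13} drops out immediately.

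The main obstacle is the structural step: showing a priori that a natural transformation $\Omega^1\to\Omega^q$ of presheaves of \emph{sets} is given by a fixed algebraic (polynomial) formula in $\theta,d\theta$, rather than by some wild, non-pointwise or non-polynomial assignment. In the excerpt this is exactly the phenomenon deferred to the appendix (``set-theoretic transformations of polynomial functors are polynomial'') and used in Lemma~\ref{thm:a4}; here one needs only the special case where the input is a single $1$-form, which should follow from naturality under linear maps and translations of $\RR^n$ together with a Taylor-expansion / homogeneity argument. Everything after that — the degree count and the differential — is routine.
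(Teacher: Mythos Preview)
Your degree-count step is wrong, and this breaks the argument. You claim that ``any monomial $\theta\wedge d\theta$ or higher already involves a wedge of a $1$-form with itself or of $d\theta$ with itself,'' implying such monomials vanish. But $d\theta$ is a $2$-form, and $2$-forms do not square to zero: on a test manifold of dimension at least~$4$ the form $(d\theta)^{2}$ is generically nonzero, and on one of dimension at least~$3$ the form $\theta\wedge d\theta$ is generically nonzero. So the natural transformations $\theta\mapsto \theta\wedge (d\theta)^{k}$ and $\theta\mapsto (d\theta)^{k}$ are nonzero elements of $\Omega^{2k+1}(\Omega^1)$ and $\Omega^{2k}(\Omega^1)$ respectively. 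The paper says this explicitly right after the statement: $\Omega^q(\Omega^1)$ is one-dimensional for \emph{every}~$q$, generated by $(d\omo)^{q/2}$ or $\omo\wedge(d\omo)^{(q-1)/2}$. Your conclusion that the complex is $\RR\to\RR\to\RR\to 0\to 0\to\cdots$ does not even match the theorem you are proving, which asserts $\RR$ in every degree; your sentence ``the pattern $0,1,0,1,\dots$ repeats formally because everything in degree $\ge 3$ vanishes'' is internally inconsistent.

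Once the monomials are correctly enumerated, the differential is easy and gives the stated pattern: $d$~kills $(d\theta)^k$ and sends $\theta\wedge(d\theta)^k$ to $(d\theta)^{k+1}$. The genuinely hard part remains exactly the structural step you flagged---proving that every set-theoretic natural transformation $\Omega^1\to\Omega^q$ is a polynomial in $\theta$ and $d\theta$. The paper does not attempt a direct argument in this special case; it deduces the result from Theorem~\ref{thm:32} (the de~Rham complex of $\ENG$ is the Weil algebra $\Kos\mathfrak{g}^*$) with $G=\TT$, so $\mathfrak{g}=\RR$ and $\ENG\simeq\Omega^1$, and explicitly remarks that ``it does not appear that this special case has a substantially simpler proof than the general case.'' That general proof goes through Lemma~\ref{thm:a4} (locality: the value at a point depends only on a finite jet), then the appendix result that set-theoretic transformations of polynomial functors are polynomial, then Weyl's invariant theory for $GL(W)$. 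Your sketch of the structural step---testing on $\RR$ and $\RR^2$ and invoking translations and dilations---does not obviously suffice, since one needs test manifolds of arbitrarily high dimension to detect the higher generators, and the passage from naturality to polynomiality is precisely the content of the appendix theorem.
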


\noindent
 The vector space~$\Omega ^q(\Omega ^1)$ has dimension one and is generated
by~$(d\omo)^{\wedge (q/2)}$ if $q$~is even and $\omo\wedge (d\omo)^{\wedge
(q-1)/2}$ if $q$~is odd, where $\omo$~is defined in~\eqref{eq:10}.  This is a
special case\footnote{in which the Lie group is~$G=\TT$, the circle group} of
Theorem~\ref{thm:32}, whose proof appears in~\S\ref{sec:7}.  It does not
appear that this special case has a substantially simpler proof than the
general case.

  \begin{remark}[]\label{thm:9}
 Theorem~\ref{thm:8} unpacks into a concrete statement in ``invariant
theory''.  Namely, $\tau \in \Omega ^q(\Omega ^1)=\presheaves(\Omega ^1,\Omega
^q)$~is a natural construction of a $q$-form from a 1-form.  Thus if $M$~is a
smooth manifold and $\omega \in \Omega ^1(M)$ we have $\tau (\omega )\in
\Omega ^q(M)$, and for any smooth map $f\:M'\to M$ we have $\tau (f^*\omega
)=f^*\tau (\omega )$.  The language of presheaves encodes this naturality
statement, and it opens the way to more intricate definitions and theorems.
  \end{remark}

  \begin{remark}[]\label{thm:11}
 Theorem~\ref{thm:8} shows that the generalized manifold $\Omega ^1$ ~has a
rather simple de Rham complex, e.g., it is finite dimensional in each degree.
By contrast, the space of 1-forms on a positive dimensional ordinary
manifold~$X$ is infinite dimensional.
  \end{remark}

The examples of presheaves we have encountered are determined by local data:
functions and forms are determined by their values on arbitrarily small open
sets.  We abstract that property, which is not satisfied by a general
presheaf.  

  \begin{definition}[]\label{thm:10}
 Let $\sF\:\Man\op\to\Set$ be a presheaf.  Then $\sF$~is a \emph{sheaf} if
for every manifold~$M$ and every open cover~$\{U_\alpha \}$ of~$M$
  \begin{equation}\label{eq:14}
     \sF(M)\longrightarrow
     \prod\limits_{\alpha _0}\sF(U_{\alpha _0})\;
     \substack{\longrightarrow\\\longrightarrow } \;\prod\limits_{\alpha _0
     ,\alpha _1 }\sF(U_{\alpha_0} \cap 
     U_{\alpha_1} )
  \end{equation}
is an equalizer diagram.
  \end{definition}

\noindent
 This is the usual gluing property of a sheaf: given $x_{\alpha_0} \in
\sF(U_{\alpha_0} )$ such that the restrictions of~$x_{\alpha_0} $
and~$x_{\alpha_1} $ to~$U_{\alpha_0} \cap U_{\alpha_1} $ agree, there is a
unique $x\in \sF(M)$ which restricts on ~$U_{\alpha _0}$ to~$x_{\alpha _0}$.
Functions satisfy this gluing property, and more generally $\Omega ^{\bullet
}$ is a sheaf.  We often say $\sF(U)$~is the set of sections of the
sheaf~$\sF$ on the open set~$U$.

Each presheaf $\mathcal F$ has a universal map $\mathcal F\to \ass\mathcal F$
to a sheaf~$\ass\mathcal F$, called the sheafification of~$\mathcal F$.
Universality means that if $\sF'$ is a sheaf and $\sF\to \sF'$ is a map (of
presheaves) then there is a unique sheaf map $\ass \sF\to \sF'$ making the
diagram
  \begin{equation}\label{eq:81}
  \begin{split}
     \xymatrix{ \sF \ar[dr] \ar[r] & \ass \sF \ar[d] \\ & \sF'} 
  \end{split}
  \end{equation}
 commute.  In the language of categories this is expressed by saying that the
forgetful functor $\sheaves \longrightarrow \presheaves $ from the category
of sheaves to the category of presheaves has a \emph{left adjoint} $\ass$.
The functor $\ass$ is the {\em associated sheafification} functor.

Just as presheaves on a fixed topological space have stalks, so too do
presheaves on manifolds.  As all manifolds are locally diffeomorphic to a
ball in affine space, we may as well record only one stalk in each dimension.

  \begin{definition}[]\label{thm:41}
 Let $\sF\:\Man\op\to\Set$ be a presheaf.  For $m\in \ZZ^{\ge0}$ the
\emph{$m$-dimensional stalk} of~$\sF$ is the colimit
  \begin{equation}\label{eq:51}
     \colim_{r\to0} \sF\bigl(B^m(r)\bigr), 
  \end{equation}
where $B^m(r)\subset \AA^m$ is the ball of radius~$r$ about the origin in
$m$-dimensional affine space. 
  \end{definition}

\noindent
 If $r'<r$ there is an inclusion $B^m(r')\hookrightarrow B^m(r)$, and so a
restriction map $\sF\bigl(B^m(r)\bigr)\to \sF\bigl(B^m(r')\bigr)$.  The colimit
is explicitly computed by taking the disjoint union of
all~$\sF\bigl(B^m(r)\bigr)$ and identifying $x\in \sF\bigl(B^m(r)\bigr)$ and
$x'\in \sF\bigl(B^m(r')\bigr)$ if $x$~maps to~$x'$ under restriction.  The
stalks are sets with no additional structure.  For example, the
$m$-dimensional stalk of the sheaf~$\Omega ^{\bullet }$ is the set of germs
of smooth differential forms at the origin in~$\AA^m$.

  \begin{remark}[]\label{thm:42}
 The information contained in a sheaf is essentially the collection of
stalks, one for each nonnegative integer~$m$, and the maps between them
induced from germs of smooth maps between affine spaces.
  \end{remark}

  \begin{remark}[]\label{thm:59}
 The map $\sF\to\ass\sF$ induces an isomorphism on stalks.
  \end{remark}

Function spaces provide another class of examples of sheaves on manifolds.

  \begin{example}[function spaces]\label{thm:49}
 Let $X,Y$ be smooth manifolds.  The space of smooth maps $X\to Y$ may be
given the structure of an infinite dimensional Fr\'echet manifold, but we can 
alternatively work with it as a sheaf~$\sF$.  Namely, for a test manifold~$M$
let $\sF(M)$ be the set of smooth maps $M\times X\to Y$.  There are many
variations.  For example, if we replace~$Y$ by the sheaf~$\Omega ^1$ then
$\sF(M)=\Omega ^1(M\times X)$.  Notice that by promoting 1-forms on~$X$ to a
sheaf, we attach to a test manifold~$M$ 1-forms on the product~$M\times X$,
not partial 1-forms defined only on tangent vectors pointing along~$X$.
  \end{example}

A first attack on Problem~\ref{thm:1} might begin by considering the
presheaf~$\sF$ which to a test manifold~$M$ assigns 
  \begin{equation}\label{eq:15}
     \sF(M) = \{\textnormal{isomorphism classes of $G$-connections
     on~$M$}\} 
  \end{equation}
for a fixed Lie group~$G$.  Let $\mathfrak{g}$~be the Lie algebra of~$G$.  An
element of~$\sF(M)$ is an equivalence class of principal $G$-bundles $P\to M$
with connection~$\Theta \in \Omega ^1(P;\mathfrak{g})$, where two
connections~$\Theta ,\Theta '$ are equivalent if there is a bundle
isomorphism $\varphi \:P'\to P$ covering the identity map on~$M$ such that
$\varphi ^*(\Theta )=\Theta '$.  It is standard to verify that $\sF$~is a
\emph{presheaf} on manifolds.  By contrast, $\sF$~is not a \emph{sheaf}.  For
consider $M=S^1$ with the open cover~$U_1,U_2\subset S^1$ by the complements
of two distinct points.  Then $U_1\cap U_2$~is diffeomorphic to two disjoint
intervals and \eqref{eq:14} reduces to the diagram
  \begin{equation}\label{eq:16}
  \begin{split}
     \xymatrix{\sF(S^1)\ar[r]\ar[d] & \sF(U_1)\ar[d]
     &G/G\ar[r]\ar[d] & \{* \}
     \ar[d]\\ \sF(U_2)\ar[r] &\sF(U_1\cap U_2) &\{*\}\ar[r] &
     \{*,*\}}   
  \end{split}
  \end{equation}
A $G$-connection on~$S^1$ is determined up to isomorphism by the conjugacy
class its holonomy, from which $\sF(\cir)=G/G$ is the set of conjugacy
classes.  (Here $G$~acts on itself by conjugation.)  Any connection on an
interval is isomorphic to the trivial connection.  Since \eqref{eq:16}~fails
to be a pullback diagram, $\sF$~is not a sheaf.

Isomorphism classes of connections do not glue since connections have
automorphisms.  In the preceding example trivial connections on the
intervals~$U_1,U_2$ can glue to a nontrivial connection on~$S^1$, and this
explains the failure of~$\sF$ to be a sheaf.  In the next section we explore
techniques for tracking the automorphisms and so ultimately for gluing
connections.

   \section{Homotopy theory}\label{sec:4}
% lastsubsec@000

In this section we come to grips with the following question. 

  \begin{problem}[]\label{thm:14}
 What mathematical structure~$\sS$ describes the collection of
$G$-connections on a fixed manifold~$M$?
  \end{problem}

\noindent
 Let's first take the simplest case $G=\zt$, the cyclic group of order two.
A principal $\zt$-bundle over~$M$ is simply a double cover $P\to M$, and it
has a unique connection.  So a $\zt$-connection \emph{is} a double cover.
Problem~\ref{thm:14} specializes to: What mathematical structure~$\sS$
encodes all double covers of~$M$?  Notice that the fiber of a double cover
consists of two points, which we can think of as the two points of norm one
in a real one-dimensional vector space equipped with an inner product.  Since
the space of inner products is contractible---any two are related by a
positive scalar---double covers are ``topologically equivalent'' to real line
bundles.  Thus we are returning to the classification problem at the
beginning of~\S\ref{sec:2}, only now for real line bundles in place of real
2-plane bundles.  Consider, then, the Grassmannian~$Gr_1(W)$ of lines in any
real vector space~$W$, also called the \emph{projective space}~$\PP(W)$.  For
$W$~infinite dimensional it is a good model for the collection of all lines,
and we might be tempted to take the space of smooth maps $M\to Gr_1(W)$ as
the answer to Problem~\ref{thm:14}.  But as in~\S\ref{sec:2} there are
different ways to make~$Gr_1(W)$ an infinite dimensional manifold, and for
none of them is there a 1:1~correspondence between maps $M\to Gr_1(W)$ and
real line bundles over~$M$.  So we seek a different approach in which
classifying maps are unique.

The non-uniqueness of classifying maps is due to the fact that a double
cover~$P\to M$ has internal symmetries, namely maps $\varphi \:P\to P$ which
cover the identity map on~$M$.  If $M$~is connected there is a unique
non-identity symmetry, the deck transformation which flips the sheets of the
double cover.  We need a mathematical structure which tracks the symmetries
and, more generally, tracks isomorphisms $\varphi \:P'\to P$ between
different double covers.  One possibility is to organize double covers and
their isomorphisms into a \emph{groupoid}.
 
  \begin{definition}[]\label{thm:38}
 A \emph{groupoid}~$\sG$ is a category in which every arrow is invertible.
Two groupoids $\sG,\sG'$ are \emph{equivalent} if they are equivalent
categories, i.e., if there exist functors $f\:\sG\to \sG'$, $g\:\sG'\to\sG$
and natural equivalences $g\circ f\simeq \id_{\sG}$, $f\circ g\simeq
\id_{\sG'}$.
  \end{definition}

\noindent 
 We write $\sG=\{\sG_0,\sG_1\}$, where $\sG_0$~ is the collection of objects
and $\sG_1$~ the collection of morphisms.  A functor $f\:\sG\to\sG'$ is an
\emph{equivalence} if and only if it is essentially surjective and fully
faithful.  The first condition means that for each~$x'\in \sG'_0$ there
exists~$x\in\sG\mstrut _0$ and $(fx\to x')\in \sG'_1$.  The second means that
for all~$x,y\in \sG\mstrut _0$ the map $f\:\sG(x,y)\to \sG'(fx,fy)$ is a
bijection, where $\sG(x,y)$~is the set of arrows in~$\sG$ from~$x$ to~$y$.

The following criterion will be useful later.  A groupoid~$\sG$ is
\emph{discrete} if for all~$x,y\in \sG_0$ the set~ $\sG(x,y)$ is either empty
or contains a unique element.

  \begin{lemma}[]\label{thm:74}
 Let $\sG,\sG'$ be discrete groupoids.  Then if $f\:\sG\to\sG'$ is surjective
on objects, it is an equivalence of groupoids.
  \end{lemma}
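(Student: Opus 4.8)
The plan is to verify directly that $f\colon\sG\to\sG'$ is essentially surjective and fully faithful, the two conditions recalled just above the statement. Essential surjectivity is immediate: we are given that $f$ is surjective on objects, so for each $x'\in\sG'_0$ there is $x\in\sG_0$ with $fx=x'$, and the identity arrow $\id_{x'}\in\sG'(fx,x')$ witnesses the required condition. The content is therefore entirely in showing that $f$ is fully faithful, i.e.\ that for all $x,y\in\sG_0$ the map $f\colon\sG(x,y)\to\sG'(fx,fy)$ is a bijection.

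First I would prove injectivity. Since $\sG$ is discrete, the set $\sG(x,y)$ has at most one element, so any map out of it is automatically injective; there is nothing to check. Next I would prove surjectivity, which is where the hypotheses actually get used. Fix $x,y\in\sG_0$ and suppose $\sG'(fx,fy)$ is nonempty, say it contains an arrow $\beta$; we must produce an arrow in $\sG(x,y)$. The key observation is that in a \emph{groupoid} the existence of an arrow $fx\to fy$ forces $fx$ and $fy$ to lie in the same isomorphism class; combined with discreteness of $\sG'$, the arrow $\beta$ is in fact the \emph{unique} arrow $fx\to fy$. So it suffices to show $\sG(x,y)\neq\varnothing$. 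Here is the point: $fx$ and $fy$ are isomorphic in $\sG'$. If I can show that $x$ and $y$ are isomorphic in $\sG$, then since $\sG$ is discrete there is a (unique) arrow $x\to y$, and its image under $f$ lands in $\sG'(fx,fy)$, which has a unique element, hence equals $\beta$ — giving surjectivity.

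The remaining gap — and the main obstacle — is precisely this implication: if $fx\cong fy$ in $\sG'$, must $x\cong y$ in $\sG$? In general a functor surjective on objects need not reflect isomorphisms, so the discreteness of \emph{both} groupoids must be exploited. I would argue as follows. Decompose $\sG$ and $\sG'$ into their connected components (isomorphism classes of objects); a discrete groupoid is, up to equivalence, a disjoint union of trivial one-object one-morphism groupoids indexed by its set of components, i.e.\ it is equivalent to its set $\pi_0$ of components regarded as a discrete category. A functor between discrete groupoids is determined up to natural isomorphism by the induced map $\pi_0 f\colon\pi_0\sG\to\pi_0\sG'$, and $f$ is an equivalence iff $\pi_0 f$ is a bijection. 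Surjectivity of $f$ on objects gives surjectivity of $\pi_0 f$, so it remains to see $\pi_0 f$ is injective, i.e.\ that $fx\cong fy\implies x\cong y$. But if $x,y$ lie in distinct components of $\sG$, pick the (unique) arrow in $\sG'$ identifying $fx$ with $fy$; pulling back along $f$ — using that $f$ is a functor and that each hom-set of $\sG$ is empty or a singleton — I would derive a contradiction with discreteness by examining the composite isomorphisms. Concretely: were $\pi_0 f$ not injective, two distinct objects $x,y$ of $\sG$ with no arrow between them map to isomorphic objects; but then in $\sG'$ one has both the isomorphism $fx\to fy$ and, via the components of $\sG$ being distinct, no compatible lift, which one shows is impossible once one tracks that discreteness makes all the relevant diagrams commute trivially. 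I expect this component-counting step to be short but to be the only place where genuine care is needed; everything else is formal unwinding of the definitions of "discrete", "essentially surjective", and "fully faithful".
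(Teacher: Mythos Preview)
Your reduction is correct up to the point you yourself flag: everything hinges on showing that $\pi_0 f$ is injective, i.e.\ that $fx\cong fy$ in $\sG'$ forces $x\cong y$ in $\sG$. But the argument you sketch for this step is not an argument. You write that you would ``pull back along $f$'' the isomorphism $fx\to fy$ and ``derive a contradiction with discreteness''; functors do not in general let you pull back morphisms, and discreteness of $\sG$ gives you no leverage when $\sG(x,y)=\varnothing$. Nothing in the hypotheses produces an arrow $x\to y$ out of an arrow $fx\to fy$.

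In fact the gap is unfillable: the lemma as stated is false. Take $\sG$ to have two objects $a,b$ with only identity arrows, and $\sG'$ to have two objects $c,d$ with a (unique) isomorphism $c\to d$ between them. Both are discrete in the paper's sense. The functor $f$ with $f(a)=c$, $f(b)=d$ is surjective on objects, yet $\sG(a,b)=\varnothing$ while $\sG'(c,d)$ is a singleton, so $f$ is not full and hence not an equivalence. (Even simpler: let $\sG=\{a,b\}$ and $\sG'=\{c\}$, both with only identities, and $f(a)=f(b)=c$.) The paper does not supply a proof of the lemma, and in its two applications (the \v{C}ech groupoid of an open cover mapping to the underlying manifold, and the same on stalks) the conclusion holds for a reason beyond the stated hypothesis: there one has the extra property that $\sG(x,y)\neq\varnothing$ precisely when $f(x)=f(y)$, which is exactly the fullness you need. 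So your instinct that the injectivity of $\pi_0 f$ is ``the only place where genuine care is needed'' was right; the care reveals that an additional hypothesis is required.
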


\noindent
 A set may be regarded as a groupoid with only identity arrows, which in
particular is a discrete groupoid.  Conversely, every discrete groupoid is
equivalent to a set.

Returning to double covers, let
  \begin{equation}\label{eq:17}
  \begin{split}
     \begin{aligned} \sG_0=\sG_0(M)&=\textnormal{collection of double covers
     }\pi \:P\to M ;\\
      \sG_1=\sG_1(M)&=\textnormal{collection of commutative
     diagrams}\raise5ex\hbox{\xymatrix{P'\ar[rr]^\varphi_{\cong } 
     \ar[dr]_{\pi'}&&P\ar[dl]^{\pi }\\&M }} 
      \end{aligned} 
  \end{split}
  \end{equation}
Isomorphisms of double covers, which comprise~$\sG_1$, are part of the
structure.  

As an example consider~$M=\cir$.  The groupoid~\eqref{eq:17} is very large;
each double cover of the circle is a distinct element of~$\sG_0(\cir)$.  But
\emph{up to isomorphism} there are only two distinct double covers of the
circle.  One is the trivial double cover $\pi _0\:\cir\times \zt\to\cir$.
Choose a particular nontrivial double cover~$\pi _1$: identify $\cir\subset
\CC$ as the set of complex numbers of unit norm, and let $\pi _1$~be the
squaring map.  Let~$\sG'$ be the groupoid $\sG'_0=\{\pi _0,\pi _1\}$ and
$\sG'_1=\{\id_{\pi _0},\id_{\pi _1},\varphi _0,\varphi _1\}$ has four
elements.  (Here $\varphi _i$~is the deck transformation of~$\pi _i$.)  There
is an inclusion map $g\:\sG'\to \sG$, and we claim it is an equivalence of
groupoids.  To construct a functor $f\:\sG\to\sG'$, for each double cover
$\pi \:P\to\cir$ \emph{choose} an isomorphism $\pi \to\pi _i$ for $i=0$
or~$i=1$ according to whether $\pi $~is trivializable or not.  The
functor~$f$ maps~$\pi $ to~$\pi _i$, and under the chosen isomorphisms any
arrow $\pi \to\pi '$ in~$\sG$ maps to either~$\id_{\pi _i}$ or~$\varphi _i$.
The composition $f\circ g$~is the identity functor on~$\sG'$, and the chosen
isomorphisms are the data of a natural equivalence from~$g\circ f$
to~$\id_{\sG}$.

Returning to Problem~\ref{thm:14}, it is not enough to simply give a
mathematical structure~$\sS$.  We must also discuss a notion of
equivalence~$\simeq$ between two instances of~$\sS$, a relation we term
\emph{weak equivalence}.  This is a key point: \emph{the solution to
Problem~\ref{thm:14} is a pair~$(\sS,\simeq )$.}  Now $\sS=\Grp$ is a
venerable mathematical structure, and with the notion of equivalence in
Definition~\ref{thm:38} it is a valid solution to Problem~\ref{thm:14}.  For
what we do in this paper it is sufficient, and we will use it to good
advantage, but nonetheless we describe a more general solution which applies
more broadly.  As one motivation, most of us are much fonder of the
\emph{geometric} notion $\sS=\Spc$ with weak equivalences defined to be
(weak) \emph{homotopy} equivalences.  There is a direct relationship of
groupoids and spaces.   There is a functor
  \begin{equation}\label{eq:18}
     \Grp\longrightarrow \Spc 
  \end{equation}
which assigns a \emph{classifying space} to each groupoid; see
Definition~\ref{thm:39} below.

  \begin{theorem}[\cite{S}, Proposition~2.1]\label{thm:58} 
 Equivalent groupoids map to homotopy equivalent spaces.
  \end{theorem}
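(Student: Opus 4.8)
The plan is to factor the classifying space functor \eqref{eq:18} as the \emph{nerve} $N\colon\Grp\to\SSet$ followed by \emph{geometric realization} $|\cdot|\colon\SSet\to\Spc$, so that $B\sG=|N\sG|$, and then to prove the key lemma that $B$ sends a natural transformation of functors to a homotopy between the induced maps. Once this is in hand the theorem is purely formal. An equivalence of groupoids (Definition~\ref{thm:38}) provides functors $f\colon\sG\to\sG'$ and $g\colon\sG'\to\sG$ together with natural isomorphisms $g\circ f\simeq\id_{\sG}$ and $f\circ g\simeq\id_{\sG'}$; since $B$ is a functor we obtain continuous maps $Bf$, $Bg$ with $B(g\circ f)=Bg\circ Bf$ and $B(f\circ g)=Bf\circ Bg$, and the key lemma converts the natural isomorphisms into homotopies $Bg\circ Bf\simeq\id_{B\sG}$ and $Bf\circ Bg\simeq\id_{B\sG'}$. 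Hence $Bf$ is a homotopy equivalence with homotopy inverse $Bg$, which is the assertion.

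To prove the key lemma, let $\eta\colon F\Rightarrow G$ be a natural transformation between functors $F,G\colon\sG\to\sG'$. The observation is that $\eta$ is exactly the same data as a single functor $H\colon\sG\times\mathbf{1}\to\sG'$ out of the product of $\sG$ with the arrow category $\mathbf{1}=\{0\to1\}$, with $H$ restricting to $F$ over the object $0$ and to $G$ over the object $1$, and with $H$ sending the arrow $(\id_x,\,0\to1)$ to $\eta_x$. Now the nerve preserves products (products of groupoids are formed as in all small categories, where $N$ is a right adjoint), and $N\mathbf{1}=\Delta^1$, so $N(\sG\times\mathbf{1})\cong N\sG\times\Delta^1$. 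Since $\Delta^1$ is a finite simplicial set, the canonical map $|N\sG\times\Delta^1|\to|N\sG|\times|\Delta^1|=B\sG\times[0,1]$ is a homeomorphism. Composing,
\[
  B\sG\times[0,1]\ \xrightarrow{\ \cong\ }\ |N(\sG\times\mathbf{1})|\ \xrightarrow{\ |NH|\ }\ |N\sG'|=B\sG'
\]
is a homotopy whose restriction to $B\sG\times\{0\}$ is $BF$ and whose restriction to $B\sG\times\{1\}$ is $BG$. (For groupoids every natural transformation is automatically invertible, but the argument does not use this: a bare natural transformation already yields the homotopy.)

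The main obstacle is the single non-formal input, namely the compatibility of geometric realization with the product by $\Delta^1$, i.e.\ that $|N\sG\times\Delta^1|\to B\sG\times[0,1]$ is a homeomorphism. This is handled by Milnor's theorem on realizations of products of simplicial sets, which applies unconditionally here because one of the two factors, $\Delta^1$, has only finitely many nondegenerate simplices; alternatively one works throughout in compactly generated spaces, where $|\cdot|$ commutes with finite products on the nose. Everything else — functoriality of $N$ and $|\cdot|$, the dictionary between natural transformations $F\Rightarrow G$ and functors $\sG\times\mathbf{1}\to\sG'$, and the bookkeeping that turns a pair of homotopies into a homotopy equivalence — is formal, and I would present it as such.
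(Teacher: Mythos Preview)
Your proof is correct and is essentially the standard argument, which is also the one given in the cited reference~\cite{S}. Note, however, that the paper itself does not prove this statement: it is quoted as Proposition~2.1 of Segal's paper and no proof is supplied here. So there is no ``paper's own proof'' to compare against; your argument reproduces Segal's, namely that a natural transformation $F\Rightarrow G$ is the same as a functor $\sG\times\mathbf{1}\to\sG'$, that the nerve carries this to $N\sG\times\Delta^1\to N\sG'$, and that geometric realization then yields a homotopy $B\sG\times[0,1]\to B\sG'$ from $BF$ to $BG$.
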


\noindent
 If we apply~\eqref{eq:18} to the groupoid~$\sG(\pt)$ of double covers of a
point, then we obtain a space homotopy equivalent to the projective space
$\RP^{\infty}=\PP(\RR^{\infty})=Gr_1(\RR^{\infty})$ of the infinite
dimensional real vector space~$W=\RR^{\infty}$, with the direct limit
topology.

  \begin{remark}[]\label{thm:16}
 For $M=\pt$ there is a very efficient groupoid~$\sG''$ weakly equivalent
to~$\sG(\pt)$ with $\sG''_0$~a set with a single element and $\sG''_1$~the
cyclic group of order two.  
  \end{remark}

Our solution to Problem~\ref{thm:14} is a mathematical structure~$\SSet$
called \emph{simplicial set}, which sits between groupoid and space: there
are functors
  \begin{equation}\label{eq:19}
  \begin{split}
     \xymatrix{&\SSet \ar[dr]\\ \Grp\ar[ur]\ar[rr]&&\Spc} 
  \end{split}
  \end{equation}
We begin with a formal definition, which also explains the
notation~`$\SSet$'.  Let $\Delta $~be the category whose objects are nonempty
totally ordered finite sets and whose morphisms are order-preserving maps.
It is equivalent to a category with one object for each nonnegative integer.

  \begin{definition}[]\label{thm:17}
 A \emph{simplicial set} is a functor $F\:\Delta \op\to\Set$.  A
map $F'_{\bullet }\to F_{\bullet }$ of simplicial sets is a natural
transformation of functors.  Simplicial sets form a
category~$\SSet$.\footnote{The notation derives from that in topology, where
$B^A$~is the set of maps $A\to B$; lowering the domain in~`$\SSet$' to a
subscript ``dualizes''~$\Delta $ to the opposite category.}
  \end{definition}

\noindent
 If $F_{\bullet }$~is a simplicial set we define the sequence of sets~$F_0,
F_1, F_2, \dots $ by $F_n=F\bigl(\{0,1,2,\dots ,n\}\bigr)$, whence the bullet
subscript in the notation~`$F_{\bullet }$'.  Intuitively, $F_n$~is the
collection of $n$-simplices of the simplicial set~$F_{\bullet }$.  The
order-preserving maps between the canonical totally ordered
sets~$\{0,1,2,\dots ,n\}$ give the diagram
  \begin{equation}\label{eq:20}
  \begin{split}
    \xymatrix@1{F_0\;\ar@{-->}[r]&\;F_1\;\ar@<1ex>[l] \ar@<-1ex>[l]
     \ar@{-->}@<1ex>[r] \ar@{-->}@<-1ex>[r] & \;F_2\;\cdots\ar@<2ex>[l]
     \ar[l] \ar@<-2ex>[l] 
      } 
  \end{split}
  \end{equation}
The left solid arrows are the $n+1$~ \emph{face maps} of an $n$-simplex.  The
right dashed arrows are the $n$~\emph{degeneracy maps} of an $n$-simplex.
The composition laws in~$\Delta $ induce relations among the face and
degeneracy maps.  We proceed directly to some illustrative examples and
recommend~\cite{MP,Fr} for expository accounts and \cite{Cu,Ma,GJ}.

  \begin{example}[groupoids as simplicial sets]\label{thm:18}
 Let $\sG=\{\sG_0,\sG_1\}$ be a groupoid.  The associated simplicial
set~$F(\sG)_{\bullet }$ has $F(\sG)_0=\sG_0$ and $F(\sG)_1=\sG_1$.  In other
words, the 0-simplices of~$F(\sG)_{\bullet }$ are the objects of the groupoid
and the 1-simplices are the arrows.  For~$n>1$ define $F(\sG)_n$~to be the
collection of compositions of $n$~arrows in~$\sG_1$.  The two face maps
$\xymatrix{F(\sG)_0\,&\,F(\sG)_1\ar@<.5ex>[l] \ar@<-.5ex>[l]}$ are the source
and target maps of the groupoid, and the degeneracy map
$\xymatrix{F(\sG)_0\,\ar@{-->}[r]&\,F(\sG)_1}$ assigns the identity
arrow~$\id_\pi $ to each object~$\pi \in \sG_0$.  There is an elegant formal
definition of~$F(\sG)_{\bullet }$.  An object~$S\in \Delta $ is a category
whose objects are the elements of~$S $ and there is a unique morphism $s\to
s'$ if $s\le s'$ in~$S$.  Then the value of $F(\sG)\:\Delta \op\to\Set$
on~$S$ is the collection of functors $S\to\sG$.  Since the $n$-simplices
for~$n>1$ are determined by the 0-~and 1-simplices, the simplicial set
determined by a groupoid only carries information about topology in
dimensions zero and one.
  \end{example}

  \begin{example}[discrete simplicial sets]\label{thm:19}
 If $S$~is any set we promote it to a groupoid~$\sG$ with $\sG_0=\sG_1=S$;
there are only identity arrows.  By Example~\ref{thm:18} this determines a
simplicial set~$S_{\bullet }$ with $S_n=S$ for all~$n$; all simplices of
positive degree are degenerate.  We usually omit the lower bullet in the
notation for a constant simplicial set.  If $T$~is a topological space, we
can form the discrete simplicial set of the underlying point set of~$T$.  A
discrete simplicial set only encodes topology in dimension zero.
\emph{Discrete} simplicial sets are also called \emph{constant} simplicial
sets.
  \end{example}

We can remember more of the topology of a space~$T$ by the \emph{singular
simplicial set}.

  \begin{example}[spaces as simplicial sets]\label{thm:20}
 A space~$T$ determines a simplicial set~$\Sing_{\bullet }T$ defined by
$\Sing_nT=\Spc(\Delta ^n,T)$, the set of continuous maps of the standard
$n$-simplex~$\Delta ^n$ into~$T$.  The face and degeneracy maps are induced
by the corresponding maps of standard simplices.  The simplicial
set~$\Sing_{\bullet }T$ encodes topology in all dimensions.
  \end{example}

  \begin{example}[group actions]\label{thm:23}
 Let $S$~be a set and $G$~a group which acts on~$S$.  There is a
groupoid~$\sG_{\bullet }$ which describes this group action.  Namely, the set
of objects is~$\sG_0=S$ and the set of arrows is~$\sG_1=G\times S$: for
every~$s\in S$ and~$g\in G$ there is an arrow with source~$s\in \sG_0$ and
target~$g\cdot s\in \sG_0$.  The group action defines the composition of
arrows.  The corresponding simplicial set is
  \begin{equation}\label{eq:71}
  \begin{split}
     \xymatrix@1{S\;\ar@{-->}[r]&\;G\times S\;\ar@<1ex>[l]
     \ar@<-1ex>[l] \ar@{-->}@<1ex>[r] \ar@{-->}@<-1ex>[r] & \;G\times G\times
     S\;\cdots\ar@<2ex>[l] \ar[l] \ar@<-2ex>[l] } 
  \end{split}
  \end{equation}

  \end{example}

  \begin{example}[]\label{thm:21}
 Let $X$~be a smooth manifold and~$\sU=\{U_\alpha \}_{\alpha \in A}$ an open
cover.  There is an associated simplicial set~$F(\sU)_{\bullet }$ which
starts off as
  \begin{equation}\label{eq:21}
  \begin{split}
     \xymatrix@1{\coprod\limits_{\alpha _0\in A}U_{\alpha
     _0}\;\ar@{-->}[r]&\;\coprod\limits_{\alpha _0,\alpha _1\in A}U_{\alpha
     _0}\cap U_{\alpha _1}\;\ar@<1ex>[l] \ar@<-1ex>[l] 
     \ar@{-->}@<1ex>[r] \ar@{-->}@<-1ex>[r] & \;\coprod\limits_{\alpha
     _0,\alpha _1,\alpha _2\in A}U_{\alpha 
     _0}\cap U_{\alpha _1}\cap U_{\alpha _2}\;\cdots\ar@<2ex>[l] 
     \ar[l] \ar@<-2ex>[l] }
  \end{split}
  \end{equation}
There a natural map $f\:F(\sU)_{\bullet }\to X_{\bullet }$ to the discrete
simplicial set~$X_{\bullet }$ defined at each level by inclusion.  The
simplicial set~$F(\sU)_{\bullet }$ is derived as in Example~\ref{thm:18} from
a groupoid~$\sG$, where $\sG_i=F(\sU)_i$, $i=0,1$.  The inclusion map~$f$ is
an equivalence of groupoids, as follows immediately from Lemma~\ref{thm:74}.
  \end{example}

  \begin{definition}[]\label{thm:70}
 Let $F_{\bullet }$ be a simplicial set.  Then the set~ $\pi _0(F_{\bullet })$ is
defined as the coequalizer of 
  \begin{equation}\label{eq:113}
  \begin{split}
     \xymatrix@1{F_0\;&\;F_1\;\ar@<.5ex>[l] \ar@<-.5ex>[l]} 
  \end{split}
  \end{equation} 
  \end{definition}

\noindent
 For example, if $F_{\bullet }$~is the simplicial set associated to the
groupoid in Example~\ref{thm:23}, then $\pi _0(F_{\bullet })$~is the set of
orbits of the $G$-action on~$S$.

The functor $\SSet\to\Spc$ in~\eqref{eq:19} is called \emph{geometric
realization}.  We briefly recall the definition~\cite{Mi}.  Define the
standard $n$-simplex
  \begin{equation}\label{eq:49}
     \Delta ^n=\{(t^0,t^1,\dots ,t^n)\in \AA^{n+1}:t^i\ge0,\;t^0+\cdots+
     t^n=1\}. 
  \end{equation}
If $I\in \Delta $~is any nonempty ordered finite set, then it has a unique
isomorphism to~${0,1,\dots ,n}$ for some~$n$, and we define~$\Sigma
(I)=\Delta ^n$.  There is an easy extension to a functor $\Sigma \:\Delta
\to\Spc$.

  \begin{definition}[]\label{thm:39}
 Let $F\:\Delta \op\to\Set$~be a simplicial set.  The geometric
realization~$|F_{\bullet }|$ is the quotient space of the disjoint union
  \begin{equation}\label{eq:50}
     \coprod\limits_{I} \Sigma (I)\times F(I) 
  \end{equation}
by the identifications $(\theta _*t,x)\sim (t,\theta ^*x)$ for all morphisms
$\theta $ in~$\Delta $. 
  \end{definition}

\noindent
 More concretely, one can replace~\eqref{eq:50} by a disjoint union of
$\Delta ^n\times F_n$ and the maps in~\eqref{eq:20} give the gluings.  In
this description $F_n$~parametrizes the set of $n$-simplices in~$|F_{\bullet
}|$; the face and degeneracy maps tell how to glue them together.

  \begin{example}[]\label{thm:44}
 The geometric realization of the discrete simplicial set built from a
set~$S$ (Example~\ref{thm:19}) is canonically isomorphic to~$S$.  The
geometric realization of the simplicial set~$F(\sU)_{\bullet }$ associated to
a cover of a manifold~$X$ is homotopy equivalent to~$X$ with the discrete
topology; this follows from the remarks at the end of Example~\ref{thm:21}
and Theorem~\ref{thm:58}.  More interesting are the geometric realizations in
Example~\ref{thm:20} and Example~\ref{thm:23}: the geometric realization of
the simplicial set~$\Sing_{\bullet }T$ associated to a space~$T$ is homotopy
equivalent to~$T$ (with its given topology), and the geometric realization of
the groupoid built from a discrete group~$G$ acting on a set~$S$ is homotopy
equivalent to the union over the orbits of the classifying spaces of the
stabilizer subgroups.
  \end{example}

  \begin{definition}[]\label{thm:40}
 \ 
  \begin{enumerate}
 \item A continuous map $f\:X\to Y$ of topological spaces is a \emph{weak
homotopy equivalence} if the induced map $f_*\:\pi _0X\to\pi _0Y$ is an
isomorphism of sets and $f_*\:\pi _n(X,x)\to \pi _n(Y,f(x))$ is an
isomorphism for all~$n>0$ and all~$x\in X$.

 \item A map $F_{\bullet }\to F'_{\bullet }$ of simplicial sets is a
\emph{weak equivalence} if the induced map $|F_{\bullet }|\to |F'_{\bullet
}|$ of geometric realizations is a weak homotopy equivalence.
  \end{enumerate}
  \end{definition}

\noindent
 This completes the definition of weak equivalence~$\simeq$ for each of the
three categories in~\eqref{eq:19}, and they are compatible in that the image
of a weak equivalence is a weak equivalence.

The simplicial set which describes $G$-connections on a fixed manifold~$M$ is
associated to the groupoid~$\sG(M)$ defined analogously to~\eqref{eq:17}:
  \begin{equation}\label{eq:22}
  \begin{split}
     \begin{aligned} \sG_0=\sG_0(M)&=\textnormal{collection of
     G-connections~$(\pi ,\Theta )$ where 
     }\pi \:P\to M ,\\ &\qquad \qquad \Theta \in \Omega ^1(P;\mathfrak{g})
     \textnormal{ a connection;}\\ 
     \sG_1=\sG_1(M)&=\textnormal{collection of commutative 
     diagrams}\raise5ex\hbox{\xymatrix{P'\ar[rr]^\varphi_{\cong }
     \ar[dr]_{\pi'}&&P\ar[dl]^{\pi }\\&M}}\\[-4ex]&\qquad \qquad 
  \textnormal{ with }\varphi ^*\Theta =\Theta ' .
     \end{aligned}  \end{split}
 \end{equation}
As in Example~\ref{thm:18} there is an associated simplicial set~$F_{\bullet
}(M)$.  The set of equivalence classes~$\pi _0F_{\bullet }(M)$ of 0-simplices
is the set of equivalence classes of $G$-connections on~$M$.  But, as
desired, $F_{\bullet }(M)$~also tracks symmetries of $G$-connections.

In summary, our response to Problem~\ref{thm:14} is $(\SSet,\simeq)$ with the
definition of weak equivalence~$\simeq$ given in Definition~\ref{thm:40}(ii).

   \section{Simplicial presheaves and weak equivalence}\label{sec:5}
% lastsubsec@  1

We can now address Problem~\ref{thm:1} by mixing homotopy-theoretical
ideas~(\S\ref{sec:4}) with sheaves on manifolds~(\S\ref{sec:3}).  We begin
with the main definitions and then give many examples to illustrate.

  \begin{definition}[]\label{thm:24}
\ 
  \begin{enumerate}
 \item A \emph{simplicial presheaf on manifolds} (or \emph{simplicial
presheaf} for short) is a functor
  \begin{equation}\label{eq:23}
     \sF_{\bullet }\:\Man\to\SSet 
  \end{equation} 

 \item A simplicial presheaf~$\sF_{\bullet }$ is a \emph{simplicial sheaf} if
for each totally ordered finite set~$I\in \Delta $ the presheaf of
sets~$\sF_{\bullet }(I)$ is a sheaf.

 \item For $m\in \ZZ^{\ge0}$ the \emph{$m$-dimensional stalk} of a simplicial
presheaf~$\sF_{\bullet }$ is the simplicial set
  \begin{equation}\label{eq:54}
     \colim_{r\to0} \sF_{\bullet }\bigl(B^m(r)\bigr),
  \end{equation}
where $B^m(r)\subset \AA^m$ is the ball of radius~$r$ about the origin in
$m$-dimensional affine space.  

 \item A map $\sF_{\bullet }\to\sF'_{\bullet }$ of simplicial presheaves is a
\emph{weak equivalence} if the induced map on $m$-dimensional stalks is a
weak equivalence of simplicial sets for each~$m$.

  \end{enumerate}
  \end{definition}

\noindent
 We may restrict to the canonical totally ordered sets $I_n=\{0,1,\dots
,n\}$, as in \eqref{eq:20}, and so view a simplicial presheaf~$\sF_{\bullet
}$ as a sequence~$\sF_n$ of ordinary presheaves.  It is a simplicial sheaf if
each~$\sF_n$ satisfies the sheaf condition in Definition~\ref{thm:10}.  The
$m$-dimensional stalk of~$\sF_{\bullet }$ is the simplicial set whose set of
$n$-simplices is the $m$-dimensional stalk of~$\sF_n$.  Constructions for
simplicial sets carry over to simplicial sheaves.  For example, a sheaf~$\sG$
of groupoids gives rise to a simplicial sheaf by applying the construction in
Example~\ref{thm:18} to~$\sG(M)$ for each test manifold~$M$.

  \begin{example}[discrete simplicial sheaf]\label{thm:45}
 Let $\sF\:\Man\to\Set$ be a sheaf of sets on manifolds.  Then as in
Example~\ref{thm:19} + Example~\ref{thm:18} we can promote~$\sF$ to a
simplicial sheaf~$\stF_{\bullet }$ whose value on a test manifold~$M$ is the
discrete simplicial set with constant value~$\sF(M)$.  We simply denote this
simplicial sheaf as~$\sF$.
  \end{example}

  \begin{example}[representable simplicial sheaves]\label{thm:46}
 Recall from~\eqref{eq:6} that a smooth manifold~$X$ gives rise to a
sheaf~$\sF_X$.  The analogue for simplicial sheaves begins with a
\emph{simplicial manifold}~$\bX$, which is a simplicial set 
  \begin{equation}\label{eq:52}
  \begin{split}
     \xymatrix@1{X_0\;\ar@{-->}[r]&\;X_1\;\ar@<1ex>[l] \ar@<-1ex>[l]
     \ar@{-->}@<1ex>[r] 
     \ar@{-->}@<-1ex>[r] & \;X_2\;\cdots\ar@<2ex>[l]
     \ar[l] \ar@<-2ex>[l] } 
  \end{split}
  \end{equation}
in which each $X_n$~is a smooth manifold and all structure maps are smooth.
Let $\sF_{\bX}$~be the simplicial sheaf whose value on a test manifold is the
simplicial set
  \begin{equation}\label{eq:53}
  \begin{split}
     \xymatrix@1{\Man(M,X_0)\;\ar@{-->}[r]&\;\Man(M,X_1)\;\ar@<1ex>[l]
     \ar@<-1ex>[l] 
     \ar@{-->}@<1ex>[r] \ar@{-->}@<-1ex>[r] & \;\Man(M,X_2)\;\cdots\ar@<2ex>[l]
     \ar[l] \ar@<-2ex>[l] } 
  \end{split}
  \end{equation}
The simplicial sheaf encodes the topology of the smooth manifolds~$X_n$; the
mapping sets in~\eqref{eq:53} are sets of \emph{smooth} maps.  For example,
the 0-dimensional stalk of~$\sF_{\bX}$ is the simplicial set~\eqref{eq:52}
and the $m$-dimensional stalk is the simplicial set of germs of smooth
functions from an $m$-dimensional ball into the simplicial
manifold~\eqref{eq:52}.

As a special case we consider a smooth manifold~$X$ as the constant
simplicial manifold
  \begin{equation}\label{eq:55}
  \begin{split}
     \xymatrix@1{X\;\ar@{-->}[r]&\;X\;\ar@<1ex>[l] \ar@<-1ex>[l]
     \ar@{-->}@<1ex>[r] \ar@{-->}@<-1ex>[r] & \;X\;\cdots\ar@<2ex>[l]
     \ar[l] \ar@<-2ex>[l] } 
  \end{split}
  \end{equation}
where all face and degeneracy maps are identities.  The induced simplicial
sheaf~$\sF_X$ is the representable sheaf~\eqref{eq:6} promoted to a discrete
simplicial sheaf.  We emphasize that each $\sF_X(M)$~is a discrete simplicial
set, so is discrete as a function on totally ordered sets.  But as a function
of~$M$ the simplicial sheaf~$\sF_X$ detects the smooth structure of~$X$. 
  \end{example}

Let $X$~be a smooth manifold and~$\sU=\{U_\alpha \}_{\alpha \in A}$ an open
cover.  The simplicial manifold~\eqref{eq:21} gives rise to a representable
simplicial sheaf~$(\sF_{\sU})_{\bullet }$ as in Example~\ref{thm:46}.  There
is a natural map
  \begin{equation}\label{eq:57}
     \psi \:(\sF_{\sU})_{\bullet }\longrightarrow \sF_X 
  \end{equation}
to the discrete simplicial sheaf~$\sF_X$ defined by inclusions $U_{\alpha
_0}\cap \cdots\cap U_{\alpha _n}\subset X$.  

  \begin{proposition}[]\label{thm:50}
 $\psi \:(\sF_{\sU})_{\bullet }\to\sF_X$ is a weak equivalence of simplicial
sheaves.
  \end{proposition}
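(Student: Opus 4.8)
The plan is to check that $\psi$ induces a weak equivalence on each $m$-dimensional stalk, which by Definition~\ref{thm:24}(iv) is exactly what a weak equivalence of simplicial sheaves means. Fix $m$ and abbreviate $B = B^m(r)$ for a small ball. On stalks the map becomes $\colim_{r\to 0} (\sF_{\sU})_{\bullet}(B) \to \colim_{r\to 0} \sF_X(B)$. The right-hand side is the discrete simplicial set on the set of germs at the origin of smooth maps $B \to X$; equivalently, since the target is promoted from a sheaf of \emph{sets}, it is the constant simplicial set on the stalk of $\sF_X$, whose geometric realization is just that set with the discrete topology. So I must show the left-hand simplicial set has geometric realization weakly equivalent to this discrete set.

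First I would fix a germ $[\phi]$ of a smooth map $\phi\: B \to X$ (shrinking $B$ as needed) and analyze the fiber of $\psi$ over it on stalks. A point of $(\sF_{\sU})_n(B)$ lying over this germ is a germ at $0$ of a smooth map $B \to U_{\alpha_0}\cap\cdots\cap U_{\alpha_n}$, i.e.\ a choice of indices $(\alpha_0,\dots,\alpha_n)$ together with the single germ $[\phi]$ subject to $\phi(0) \in U_{\alpha_0}\cap\cdots\cap U_{\alpha_n}$ (after shrinking $r$). In other words, near the origin the simplicial set $(\sF_{\sU})_{\bullet}(B)$ over the germ $[\phi]$ is precisely the nerve of the Čech groupoid of the open cover $\{U_\alpha\}$ pulled back along $\phi$ and then taken at the stalk: its set of $0$-simplices is $\{\alpha : \phi(0)\in U_\alpha\}$, and there is a unique $1$-simplex between any ordered pair of such indices. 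This is a \emph{discrete} groupoid in the sense defined just before Lemma~\ref{thm:74}, and it is nonempty because $\{U_\alpha\}$ covers $X$ so some $U_\alpha$ contains $\phi(0)$. By Lemma~\ref{thm:74} it is equivalent to a point, hence (Example~\ref{thm:18}, Example~\ref{thm:44}, Theorem~\ref{thm:58}) its geometric realization is contractible.

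The remaining step is to assemble these fiberwise statements into a global weak equivalence of the stalk simplicial sets. Concretely, the stalk of $(\sF_{\sU})_{\bullet}$ is the disjoint union, over germs $[\phi]$, of the contractible nerves just described — there are no maps in the simplicial set connecting simplices sitting over different germs, since the face and degeneracy maps of $(\sF_{\sU})_{\bullet}$ commute with the projection $\psi$ to the discrete simplicial sheaf $\sF_X$. Therefore $|(\sF_{\sU})_{\bullet}(B)|_{\text{stalk}} = \coprod_{[\phi]} |\text{(contractible)}|$, and $\psi$ collapses each contractible piece to the corresponding point of the discrete set of germs; this is manifestly a weak homotopy equivalence. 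Running this for every $m$ gives the proposition. The one point that needs care — and is the main obstacle — is the commutation with colimits: I must check that restricting to smaller balls is compatible with the decomposition over germs, i.e.\ that ``$\phi(0)\in U_\alpha$'' is a condition that stabilizes as $r\to 0$, so that the colimit of the Čech nerves over $B^m(r)$ is again a single Čech nerve (for the germ $[\phi]$) rather than something with extra identifications; this is where one uses that $\{U_\alpha\}$ is an open cover and that germs only see arbitrarily small neighborhoods of $0$, so $\phi(0)\in U_\alpha$ already determines $\phi(B^m(r))\subset U_\alpha$ for $r$ small enough, and the index set $\{\alpha:\phi(0)\in U_\alpha\}$ is exactly the colimit of $\{\alpha:\phi(B^m(r))\subset U_\alpha\}$.
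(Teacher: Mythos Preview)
Your proposal is correct and follows essentially the same route as the paper. The paper's proof is terser: it observes that both domain and codomain are sheaves of \emph{discrete} groupoids, notes that on stalks the map is surjective on objects, and invokes Lemma~\ref{thm:74} once to conclude that the map of stalks is an equivalence of groupoids (hence a weak equivalence of nerves via Theorem~\ref{thm:58}). Your fiberwise decomposition over germs $[\phi]$ and separate colimit-compatibility check unpack this same argument in more detail; applying Lemma~\ref{thm:74} directly to the whole stalk map, rather than fiber by fiber, would let you skip the disjoint-union reassembly and the explicit colimit discussion.
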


  \begin{proof}
 Both the domain and codomain of~$\psi $ are sheaves of groupoids.  The
inclusion map~\eqref{eq:57} on stalks satisfies the hypotheses of
Lemma~\ref{thm:74}, so is an equivalence of groupoids.  (See also
Example~\ref{thm:21}.)  Then Theorem~\ref{thm:58} and Definition~\ref{thm:40}
imply that the associated simplicial sets are weakly equivalent, and so
Definition~\ref{thm:24}(iv) implies that $\psi $~is a weak equivalence of
simplicial sheaves.
  \end{proof}

We come now to our main example, which is the simplicial sheaf that
classifies principal $G$-bundles with connection.

  \begin{example}[$\BNG$]\label{thm:25}
 Fix a Lie group~$G$.  The simplicial presheaf~$\BNG$ of $G$-connections
assigns to each test manifold~$M$ the simplicial set associated the
groupoid~\eqref{eq:22} of $G$-connections on~$M$.  Since connections and
isomorphisms of connections can be glued along open sets, the simplicial
presheaf~$\BNG$ satisfies the sheaf condition.  If $f\:M'\to M$ is a smooth
map of manifolds, then $\BNG(f)$~is the pullback map on $G$-connections and
their isomorphisms.
  \end{example}

  \begin{remark}[]\label{thm:26}
 There is a technical problem with this example and its close cousins below.
Observe that if $M''\xrightarrow{f''}M'\xrightarrow{f'}M$ is a composition of
smooth maps, and $h\:M\to\RR$ a smooth \emph{function}, then the pullback is
strictly associative: $(f'\circ f'')^*h = (f'')^*(f')^*h $.  However, the
pullback of \emph{sets} is not strictly associative.  So if $E\to M$ is a
fiber bundle, then while $(f'\circ f'')^*E$ is \emph{canonically isomorphic}
to~ $ (f'')^*(f')^*E$, these two fiber bundles over~$M''$ are not
\emph{equal}.  This is dealt with using Grothendieck's theory of fibered
categories or alternatively higher categories.
  \end{remark}

A useful tool for verifying weak equivalences is the following.

  \begin{proposition}[]\label{thm:47}
 Let $\psi \:\sF_{\bullet }\to\sF'_{\bullet }$ be a map of simplicial
presheaves such that $\psi (B)\:\sF_{\bullet }(B)\to\sF'_{\bullet }(B)$ is a
weak equivalence for every ball~$B$ in an affine space.  Then $\psi
\:\sF_{\bullet }\to\sF'_{\bullet }$ is a weak equivalence.
  \end{proposition}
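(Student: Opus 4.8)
The plan is to reduce directly to the defining criterion for weak equivalence of simplicial presheaves, Definition~\ref{thm:24}(iv), which says that $\psi$ is a weak equivalence precisely when it induces a weak equivalence on $m$-dimensional stalks for every $m\ge 0$. The $m$-dimensional stalk of $\sF_\bullet$ is $\colim_{r\to0}\sF_\bullet\bigl(B^m(r)\bigr)$, and likewise for $\sF'_\bullet$; here the radii $r>0$ form a totally ordered --- hence filtered --- indexing category, with $r'<r$ giving the restriction map $\sF_\bullet\bigl(B^m(r)\bigr)\to\sF_\bullet\bigl(B^m(r')\bigr)$. So for each $m$ the map on stalks induced by $\psi$ is the colimit of a morphism of filtered diagrams of simplicial sets all of whose component maps $\psi\bigl(B^m(r)\bigr)\colon\sF_\bullet\bigl(B^m(r)\bigr)\to\sF'_\bullet\bigl(B^m(r)\bigr)$ are weak equivalences, since each $B^m(r)\subset\AA^m$ is a ball in affine space and the hypothesis applies. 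Thus the proposition follows as soon as we know that filtered colimits of simplicial sets preserve weak equivalences.

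That last fact I would establish in three short steps. First, apply the functorial Kan fibrant replacement $\mathrm{Ex}^\infty$ levelwise to both filtered diagrams: since $\mathrm{Sd}\,\Delta^n$ is a finite simplicial set, $\mathrm{Ex}$, and therefore $\mathrm{Ex}^\infty$, commutes with filtered colimits, so this replacement does not change the colimits and reduces us to filtered diagrams of Kan complexes (using also that $\mathrm{Ex}^\infty$ preserves weak equivalences, by $2$-out-of-$3$ against the natural map $\mathrm{id}\to\mathrm{Ex}^\infty$). Second, a filtered colimit of Kan complexes is again a Kan complex: every horn $\Lambda^n_k$ is finite, so a horn in the colimit together with a chosen filler already lives at a single stage of the diagram. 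Third, for Kan complexes $\pi_0$ and the homotopy groups $\pi_n(-,x)$ are computed from finitely many simplices at a time and hence commute with filtered colimits; since a map of Kan complexes is a weak equivalence exactly when it is a bijection on $\pi_0$ and an isomorphism on all $\pi_n$ at all basepoints, a levelwise weak equivalence of filtered diagrams of Kan complexes yields a weak equivalence on the colimit. (Alternatively one may simply cite that $\SSet$ with the Kan--Quillen structure is a combinatorial model category, every object cofibrant, in which filtered colimits are homotopical.)

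The substance of the argument is entirely in this homotopy-theoretic lemma; the reduction to it is pure unwinding of Definition~\ref{thm:24}. The point I would be careful to flag --- and which I expect to be the only real obstacle --- is that it is \emph{not} legitimate to compare ``homotopy groups'' of the raw simplicial sets $\sF_\bullet\bigl(B^m(r)\bigr)$, since these need not be Kan complexes and their naive homotopy groups need not be homotopy invariant; it is exactly for this reason that the detour through $\mathrm{Ex}^\infty$, and the observation that $\mathrm{Ex}$ commutes with filtered colimits because $\mathrm{Sd}\,\Delta^n$ is finite, is doing real work rather than being a formality. With that in hand the proposition is immediate.
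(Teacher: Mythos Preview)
Your approach is essentially identical to the paper's: reduce to stalks via Definition~\ref{thm:24}(iv), recognize the stalk as a filtered colimit over shrinking balls, and invoke the fact that filtered colimits of simplicial sets preserve weak equivalences. The paper's proof is a two-sentence sketch that simply asserts this last fact in a footnote, whereas you supply an actual argument for it via $\mathrm{Ex}^\infty$; your version is strictly more detailed but follows the same line.
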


\noindent 
 We must show that $\psi $~induces a weak equivalence on stalks.  As a stalk
is the colimit of the values of the simplicial presheaf on balls, and $\psi
$~is a weak equivalence on balls, it suffices to show that this particular
colimit\footnote{The colimit of weak equivalences is a weak equivalence for
arbitrary \emph{filtered} colimits.} of weak equivalences is a weak
equivalence.

Now we come to the total space~$\ENG$ of the universal $G$-bundle with
connection, which is the home of the universal $G$-connection.

  \begin{example}[$\ENG$]\label{thm:27}
 The simplicial sheaf~ $\ENG$ attaches to any test manifold~$M$ the groupoid
of $G$-connections with trivialization, or rather the associated simplicial
set.  So $\ENG(M)_0$ consists of triples~$(\pi ,\Theta ,s)$, where $\pi
\:P\to M$ is a $G$-bundle, $\Theta $~is a connection, and $s\:M\to P$ is a
global section of~$\pi $.  The 1-simplices of~$\ENG(M)$ are isomorphisms
$\varphi \:P'\to P$ of $G$-bundles which preserve the connection and
trivialization.  Higher simplices are compositions of isomorphisms, as in
Example~\ref{thm:18}.  Observe that a principal $G$-bundle with global
trivialization is \emph{rigid}---it has no nontrivial automorphisms.  This
means that $\ENG$~is weakly equivalent to a discrete simplicial
sheaf. Specifically, there are inverse weak equivalences
  \begin{equation}\label{eq:24}
  \begin{split}
     \xymatrix{\ENG \ar@<.5ex>[r]& \Otg\ar@<.5ex>[l]}, 
  \end{split}
  \end{equation}
where $\Otg$~is the discrete simplicial sheaf whose value on a test
manifold~$M$ is $\Omega ^1(M;\mathfrak{g})$.  On~$M$ the top arrow assigns
to~$(\pi ,\Theta ,s)\in \ENG(M)_0$ the 1-form $s^*\Theta \in \Omega
^1(M;\mathfrak{g})$, and the bottom arrow assigns to $\alpha \in \Omega
^1(M;\mathfrak{g})$ the trivial bundle $\pi \:M\times G\to M$ with identity
section~$s$ and connection form~$\Theta =\alpha +\theta $, where $\theta $~is
the Maurer-Cartan form on~$G$.  For each test manifold~$M$ these maps
determine an equivalence of groupoids (Definition~\ref{thm:38})
$\ENG(M)\simeq \Otg(M)$: the composition beginning on the right is the
identity and in the other direction a section~$s$ of a principal bundle $P\to
M$ determines an isomorphism with the trivial bundle.  It follows from
Proposition~\ref{thm:47} that \eqref{eq:24} are weak equivalences of
simplicial sheaves.
  \end{example}

The notion of a smooth Lie group action on a manifold generalizes to
sheaves.  

  \begin{definition}[]\label{thm:73}
 Let $G$~be a Lie group and $\sF$~a sheaf.  A \emph{$G$-action on~$\sF$} is a
smooth map 
  \begin{equation}\label{eq:115}
     a\:\sF_G\times \sF\to\sF 
  \end{equation}
which satisfies the action property:  on any test manifold~$M$, the sheaf map
$a$~defines an action of the group $\sF_G(M)=\Man(M,G)$ on the set~$\sF(M)$.   
  \end{definition}

\noindent
 As in Example~\ref{thm:23} there is an associated action groupoid and so
simplicial sheaf 
  \begin{equation}\label{eq:116}
  \begin{split}
     \xymatrix@1{\sF\;&\;G\times \sF\;\ar@<.5ex>[l]^<(.3){p_1}
     \ar@<-.5ex>[l]_<(.3){p_0} \ar@{-->}@<1ex>[r] \ar@{-->}@<-1ex>[r] &
     \;G\times G\times \sF\;\cdots\ar@<2ex>[l] \ar[l] \ar@<-2ex>[l] \cdots
     } 
  \end{split}
  \end{equation}
where for convenience we write~`$G$' in place of~`$\sF_G$'.  The map~$p_0$ is
projection and $p_1$~is the action map~\eqref{eq:115}.

  \begin{example}[$\BNGt$]\label{thm:28}
 There is a simplicial \emph{pre}sheaf of \emph{trivializable} $G$-bundles
with connection, but since `trivializable' is not a local condition it is not
a simplicial sheaf.  It is the sub-simplicial presheaf of~$\BNG$ whose value
on a test manifold~$M$ consists of pairs~$(\pi ,\Theta )$ such that $\pi
\:P\to M$ admits sections (but no section is specified).  We replace it by a
more explicit simplicial presheaf which \emph{is} a simplicial sheaf.
Observe that if $\pi \:P\to M$ is a principal $G$-bundle with global
trivialization $s\:M\to P$, then any other global trivialization is given by
$s\cdot g\:M\to P$ for a unique $g\:M\to G$.  Now the set of smooth
maps~$M\to G$ is the value of the sheaf~$\sF_G$ of groups on the test
manifold~$M$; see~\eqref{eq:6}.  Under the equivalence~\eqref{eq:24} the
action of $g\:M\to G$ on~$\alpha \in \Omega ^{1}(M;\mathfrak{g})$ is given by
the formula
  \begin{equation}\label{eq:25}
     \alpha \cdot g = g^*\theta + \Ad_{g\inv }\alpha = g\inv dg + g\inv
     \alpha g. 
  \end{equation}
Here $\theta $~is the Maurer-Cartan form on~$G$, and the last expression is
valid only for matrix groups.  So let~$\BNGt$ be the simplicial sheaf
  \begin{equation}\label{eq:26}
  \begin{split}
     \xymatrix@1{\Otg\;\ar@{-->}[r]&\;G\times (\Otg)\;\ar@<1ex>[l] \ar@<-1ex>[l]
     \ar@{-->}@<1ex>[r] \ar@{-->}@<-1ex>[r] & \;G\times G\times
     (\Otg)\;\cdots\ar@<2ex>[l] \ar[l] \ar@<-2ex>[l] } 
  \end{split}
  \end{equation}
where for convenience we write~`$G$' in place of~`$\sF_G$'.  The first two
solid arrows in~\eqref{eq:26} are 
  \begin{equation}\label{eq:31}
  \begin{split}
     \xymatrix@1{\displaystyle\alpha \atop \displaystyle{g^*\theta
     +\Ad_{g\inv }\alpha }\;\;& \;\;g,\alpha 
     \ar@<1ex>@{|->}[l] \ar@<-1ex>@{|->}[l]} 
  \end{split}
  \end{equation}
Note that $\BNGt$~is a sheaf of groupoids, the action groupoid of $G$~acting
on~$\Otg$. 
  \end{example}

The next result will enable us to make explicit computations with~$\BNG$
in~\S\ref{sec:7}.  Define a map 
  \begin{equation}\label{eq:56}
     \psi \:\BNGt\longrightarrow \BNG 
  \end{equation}
as the following map of sheaves of groupoids.  On a test manifold~$M$ it maps
an element $\alpha \in \Omega ^1(M;\mathfrak{g})$ to the trivial bundle
$\pi \:M\times G\to M$ with connection form $\Theta =\alpha +\theta $.  A map
$g\:M\to G$ induces an isomorphism of the trivial bundle with
connection~$\alpha +\theta $ to the trivial bundle with connection~$\alpha
\cdot g+\theta $, where $\alpha \cdot g$ is defined in~\eqref{eq:25}.

  \begin{proposition}[]\label{thm:29}
 $\psi \:\BNGt\to\BNG$~is a weak equivalence of groupoids, hence of
simplicial sheaves.
  \end{proposition}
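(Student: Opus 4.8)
The plan is to check that $\psi$ is an equivalence of groupoids on each test manifold $M$, since then Definition~\ref{thm:24}(iv) (applied via the observation that it is already a weak equivalence on every manifold, hence on balls, hence on stalks) gives the conclusion for simplicial sheaves. So fix a test manifold $M$ and examine the functor $\psi(M)\:\BNGt(M)\to\BNG(M)$. Its domain is the action groupoid of $\sF_G(M)=\Man(M,G)$ acting on $\Omega^1(M;\mathfrak g)$ via formula~\eqref{eq:25}; its codomain is the groupoid~\eqref{eq:22} of principal $G$-bundles over $M$ with connection.

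The argument then has two halves, matching the two conditions of Definition~\ref{thm:38}. \textbf{Essential surjectivity:} I would invoke the fact that every principal $G$-bundle admits local sections, but more is not quite true globally — however, the image of $\psi(M)$ consists exactly of the \emph{trivializable} bundles with connection, and so $\psi(M)$ is \emph{not} essentially surjective in general. This is the main obstacle, and the point is to recognize that Proposition~\ref{thm:29} as stated must be read correctly: the claim is a weak equivalence of \emph{simplicial sheaves}, and for that I only need $\psi(B)$ to be an equivalence of groupoids for every \emph{ball} $B$ (Proposition~\ref{thm:47}), where every principal $G$-bundle \emph{is} trivializable. So the correct line of attack is: restrict attention to $M=B$ a ball in affine space, where essential surjectivity of $\psi(B)$ follows because any $G$-bundle over a ball is trivializable, hence in the image of $\psi(B)$.

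\textbf{Fully faithful:} Given $\alpha,\alpha'\in\Omega^1(B;\mathfrak g)$, a morphism from $\psi(B)(\alpha)$ to $\psi(B)(\alpha')$ in $\BNG(B)$ is a bundle isomorphism $\varphi\:B\times G\to B\times G$ over $\id_B$ with $\varphi^*(\alpha'+\theta)=\alpha+\theta$. Such a $\varphi$ is given by right translation by a unique smooth map $g\:B\to G$, and the connection-preserving condition unwinds, using $R_g^*\Theta=\Ad_{g\inv}\Theta$ and the defining property of the Maurer–Cartan form, to exactly the equation $\alpha\cdot g=\alpha'$ with $\alpha\cdot g$ as in~\eqref{eq:25}. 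Hence morphisms $\psi(B)(\alpha)\to\psi(B)(\alpha')$ in $\BNG(B)$ correspond bijectively to $g\in\Man(B,G)$ with $\alpha\cdot g=\alpha'$, which is precisely the set of morphisms $\alpha\to\alpha'$ in the action groupoid $\BNGt(B)$; and this bijection is compatible with the functor $\psi(B)$. Thus $\psi(B)$ is fully faithful and essentially surjective, so an equivalence of groupoids, hence $\psi(B)\:\BNGt(B)\to\BNG(B)$ is a weak equivalence of simplicial sets (Example~\ref{thm:18}, Theorem~\ref{thm:58}). Applying Proposition~\ref{thm:47} concludes that $\psi\:\BNGt\to\BNG$ is a weak equivalence of simplicial sheaves. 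The routine verifications are the computation that right translation is the only bundle automorphism of $B\times G$ over $\id_B$ and the unwinding of $\varphi^*(\alpha'+\theta)=\alpha+\theta$ into~\eqref{eq:25}; I would state these and leave the bookkeeping to the reader.
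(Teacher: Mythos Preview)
Your proposal is correct and follows essentially the same approach as the paper: invoke Proposition~\ref{thm:47} to reduce to balls, then verify that $\psi(B)$ is an equivalence of groupoids by checking essential surjectivity (every $G$-bundle over a ball is trivializable) and full faithfulness (morphisms of the trivial bundle with connection biject with $g\in\Man(B,G)$ satisfying $\alpha\cdot g=\alpha'$). Your write-up is in fact more carefully worded than the paper's own proof, which compresses the same argument into three sentences.
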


  \begin{proof}
 We apply Proposition~\ref{thm:47}.  On a ball~$B$ any principal bundle is
trivializable, so $\psi (M)$~is essentially surjective.  Given $\alpha
,\alpha '\in \Omega ^1(M;\mathfrak{g})$ the set of $g\:M\to G$ such that
$\alpha '=\alpha \cdot g$ is in bijection with automorphisms of the trivial
bundle which map $\alpha +\theta $ to $\alpha \cdot g+\theta $, whence $\psi
(M)$~is fully faithful. 
  \end{proof}

We now state our solution to Problem~\ref{thm:1}.  Up to the weak
equivalences~\eqref{eq:24} and~\eqref{eq:56} the group~$G$ (as a simplicial
presheaf) acts freely on~$\ENG$ with quotient~$\BNG$, so $\ENG\to\BNG$ is a
principal $G$-bundle.  Further, there is a canonical $\mathfrak{g}$-valued
1-form on~$\ENG$, that is, a canonical map
  \begin{equation}\label{eq:27}
     \Tu\:\ENG\to \Otg .
  \end{equation}
which is simply the top arrow in~\eqref{eq:24}.  We call~$\Tu$ the
\emph{universal $G$-connection}.  That appellation is justified by the
following result.

  \begin{proposition}[]\label{thm:30}
 Let $\pi \:P\to X$ be a principal $G$-bundle with connection~$\Theta \in
\Omega ^1(P;\mathfrak{g})$.  Then there is a \emph{unique} classifying map
  \begin{equation}\label{eq:28}
  \begin{split}
     \xymatrix{ P\ar[r]^<(.3)f \ar[d]_\pi  & \ENG\ar[d]\\ X\ar[r]^<(.3){\bar f
                }&\BNG} 
  \end{split}
  \end{equation}
such that $f^*\Tu=\Theta $.  
  \end{proposition}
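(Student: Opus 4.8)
The plan is to reduce everything to the Yoneda lemma and then exhibit the classifying map by an explicit pullback construction. First I would unwind what maps out of $\sF_P$ and $\sF_X$ are. Since $\sF_P$ and $\sF_X$ are \emph{discrete} simplicial sheaves (Example~\ref{thm:46}), a map of simplicial presheaves $\sF_P\to\ENG$ is determined by its effect on $0$-simplices, and there, by the Yoneda lemma (Lemma~\ref{thm:6}) applied levelwise, it is the same datum as an element of $\ENG(P)_0$: a triple $(\rho,\Xi,\sigma)$ consisting of a principal $G$-bundle $\rho\colon Q\to P$, a connection $\Xi\in\Omega^1(Q;\mathfrak g)$, and a section $\sigma\colon P\to Q$. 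Likewise a map $\sF_X\to\BNG$ is an element of $\BNG(X)_0$, i.e.\ a principal $G$-bundle with connection on $X$. Under these identifications the right vertical arrow $\ENG\to\BNG$ of \eqref{eq:28} forgets the section, precomposition of $\bar f$ with $\sF_\pi$ corresponds to pullback along $\pi$, and, by construction, $\Tu$ sends $(\rho,\Xi,\sigma)$ to $\sigma^*\Xi$.

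For existence, I would take $\bar f$ to be the map classified by $(\pi,\Theta)\in\BNG(X)_0$, and build $f$ from the pullback bundle. Let $Q=\pi^*P=P\times_X P$, with bundle projection the first projection $q_1\colon Q\to P$ and the canonical bundle map $q_2\colon Q\to P$ (the second projection, which covers $\pi$); equip $Q$ with the pulled-back connection $q_2^*\Theta$ and the diagonal section $\delta\colon P\to Q$, $\delta(p)=(p,p)$. The underlying bundle-with-connection $(Q,q_2^*\Theta)$ is precisely the pullback $\pi^*(\pi,\Theta)$, so the square~\eqref{eq:28} commutes; and $f^*\Tu=\delta^*q_2^*\Theta=(q_2\circ\delta)^*\Theta=\id_P^*\Theta=\Theta$, as required. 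This is a short computation and I expect no difficulty here.

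For uniqueness, the key observation is that $\Tu\colon\ENG\to\Otg$ is one of the inverse weak equivalences of \eqref{eq:24} and that, for every test manifold $M$, the groupoid $\ENG(M)$ is \emph{discrete} in the sense preceding Lemma~\ref{thm:74}: a principal $G$-bundle equipped with a section is rigid, and between two such there is at most one section-preserving isomorphism (any section-fixing gauge transformation is the identity). Hence an element of $\ENG(P)_0$ is pinned down, up to a unique isomorphism, by its image $\sigma^*\Xi\in\Omega^1(P;\mathfrak g)$ under $\Tu$ together with its underlying bundle-with-connection; the commutativity of the square forces the latter to be $\pi^*(\pi,\Theta)$, and $f^*\Tu=\Theta$ forces the former to be $\Theta$, so $f$ agrees with the map constructed above. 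The bottom map $\bar f$ is similarly rigidified, using Proposition~\ref{thm:29} together with the requirement that it classify $(\pi,\Theta)$.

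The main obstacle is not any one computation but the bookkeeping flagged in Remark~\ref{thm:26}: the pullback bundle $\pi^*(\pi,\Theta)$, and iterated pullbacks generally, are only \emph{canonically isomorphic} rather than equal, so both ``the square commutes'' and ``the classifying map is unique'' must be read up to these canonical isomorphisms (equivalently, one works in the homotopy category, or fixes a cleavage of the fibered category of principal $G$-bundles). Once that is in place, the argument uses only the Yoneda lemma, the explicit trivialization \eqref{eq:24}, and the rigidity of bundles-with-section; no further homotopy theory is needed.
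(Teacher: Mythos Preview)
Your proposal is correct and follows essentially the same approach as the paper: the paper also constructs $f$ via the Yoneda element $(\tilde\pi,\tilde\Theta,\tilde s)\in\ENG(P)_0$ given by the pullback $\tilde P=\pi^*P$ with its diagonal section and pulled-back connection, exactly your $(Q,q_2^*\Theta,\delta)$. The paper dispatches uniqueness with ``The uniqueness is clear,'' whereas you spell out the rigidity of $\ENG$ and the Remark~\ref{thm:26} bookkeeping; your invocation of Proposition~\ref{thm:29} for the uniqueness of $\bar f$ is not quite the right reference (that proposition concerns $\BNGt$), but the point you need---that $\bar f$ is pinned down once the square commutes and $f$ is fixed---follows directly from Yoneda and the commutativity constraint, read up to the canonical isomorphisms you already flagged.
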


\noindent
 In \eqref{eq:28} we promote~$P$ and~$X$ to discrete simplicial presheaves.
As expected, the construction is completely tautological.

  \begin{proof}
 Let $\tp\:\tP\to P$ denote the pullback of $\pi \:P\to X$ by~$\pi $, and
$\tT=\pi ^*\Theta $ the pullback connection form on~$\tP$.  There is a
canonical section~$\ts\:P\to\tP$ of~$\tp$.  The triple~$(\tp\,,\,\tT
\,,\,\ts) $ is a 0-simplex in~$\ENG(P)$.  On a test manifold~$M$ define
  \begin{equation}\label{eq:29}
  \begin{aligned}
     \Man(M,P)&\xrightarrow{f(M)}&&\ENG(M) \\
    (\phi  \:M\to P)&\,\,\longmapsto &&\phi ^*(\tp\,,\,\tT \,,\,\ts) 
  \end{aligned}
  \end{equation}
Unwinding the definitions we verify $f^*\Tu=\Theta $.  The uniqueness is
clear.  
  \end{proof}

   \section{Abstract homotopy theory}\label{sec:10}
% lastsubsec@000

We are now in a situation best captured by ``abstract homotopy theory'', or
``homotopical algebra'' (\cite{Q1}).  We have a category $\cat C$ (in our
case simplicial presheaves) and a collection $\weq$ of maps in $\cat C$ we
are calling {\em weak equivalences}.  These weak equivalences are not
isomorphisms, but we wish to think of them as being so.  We therefore focus
on the invariants of weak equivalence, or more precisely functors
  \begin{equation}\label{eq:83}
     F:\cat C\longrightarrow  \cat D 
  \end{equation}
with the property that if $X\to Y$ is a weak equivalence in $\cat C$ then
$FX\to FY$ is an isomorphism in~ $\cat D$.  There is a universal such functor
$L:\cat C\to \ho\cat C$ called {\em the localization of $\cat C$ with respect
to~ $\weq$.}  It is characterized uniquely up to unique isomorphism by the
following universal property: for every category $\cat D$, and every functor
$F:\cat C\to \cat D$ taking the maps in $\weq$ to isomorphisms, there is a
unique functor $\ho\cat C\to D$ making the diagram
  \begin{equation}\label{eq:84}
  \begin{split}
     \xymatrix{ \cat C \ar[dr]_{F}\ar[r]^-{L} & \ho\cat C \ar@{-->}[d] \\ &
     \cat D } 
  \end{split}
  \end{equation}
commute.   

  \begin{remark}[]\label{thm:61}
 Another common notation for~$\ho\cat C$ is~$\weq\inv \cat C$.  Our choice
of~`$\ho\cat C$', and the nomenclature `homotopy category' we adopt for it,
emerges in Example~\ref{thm:62} below.
  \end{remark}

The localization $\ho\cat C$ is constructed by freely
adding to $\cat C$, for each $f\in \weq$, a new morphism $f^{-1}$, and
imposing the relations $f^{-1}\circ f=\id$ and $f\circ f^{-1}=\id$.
The issue then becomes to somehow describe the new collection of maps  
  \begin{equation}\label{eq:85}
     \ho\cat C(X,Y)
  \end{equation}
for each~$X,Y\in \cat C$.  In general, there isn't a guarantee that this is
even a set.

Let's look at some examples.   

  \begin{example}[]\label{thm:62}
 Suppose first that $\cat C$ is the
category of CW complexes, and that $\weq$ is the collection of maps
which are weak homotopy equivalences, i.e.  maps $f:X\to Y$ with the
property that for each point $x\in X$ the map of homotopy groups 
  \begin{equation}\label{eq:86}
     \pi_{n}(X,x)\longrightarrow  \pi_{n}(Y,f(x)) 
  \end{equation}
is a bijection for all $n\ge 0$ (Definition~\ref{thm:40}).  Let $\pi\cat C$
be the homotopy category of $\cat C$: the set of maps $\pi\cat C(X,Y)$ is the
quotient of $\cat C(X,Y)$ by the equivalence relation which identifies
homotopic maps.  By the Whitehead Theorem weak equivalences between CW
complexes are homotopy equivalences, so the maps in $\weq$ are sent to
isomorphisms in $\pi\cat C$.  The universal property of $\ho\cat C$ then
provides a unique functor
  \begin{equation}\label{eq:m1}
   \ho\cat C\longrightarrow  \pi\cat C
  \end{equation}
factoring the quotient $\cat C\to \pi\cat C$.   On the other hand, the
inclusions  
  \begin{equation}\label{eq:110}
     \begin{aligned} X\times \{0 \} &\longrightarrow X\times [0,1] \\ X\times
     \{1 \} &\longrightarrow 
      X\times [0,1] \end{aligned} 
  \end{equation}
are in $\weq$, so homotopic maps in $\cat C$ go to the same map in
$\ho\cat C$.  This shows that the functor $\cat C\to
\ho\cat C$ factors uniquely through $\pi\cat C$.  It follows
that~\eqref{eq:m1} is in fact an isomorphism of categories.  Thus the
maps in $\ho\cat C$ in this case may be calculated as homotopy
classes of maps.
  \end{example}

The terminology of homotopical algebra~\cite{Q1} is borrowed from this
example.  In the language of abstract homotopy theory, the class of maps
$\weq$ is called the class of ``weak equivalences'' and the category $\ho\cat
C$ is called the ``homotopy category'' of $\cat C$.  We will now use this
terminology.  Often there is a notion of ``homotopy'' floating around
in~$\cat C$, and we will use the symbol $\pi(X,Y)$ to denote the quotient of
$\cat C(X,Y)$ by the equivalence relation generated by ``homotopy.''
Generally one hopes to describe $\ho\cat C(X,Y)$ in terms of $\pi(X,Y)$.

  \begin{example}[]\label{thm:63}
 Let's look at another example.  Let $R$ be a ring and $\chain R$ the
category of non-negatively graded chain complexes 
  \begin{equation}\label{eq:87}
     \longrightarrow C_{n}\xrightarrow{\;\;d\;\;}{}
     C_{n-1}\longrightarrow \cdots\longrightarrow C_{0}.  
  \end{equation}
We take the class of weak equivalences $\weq$ to be the class of maps
inducing an isomorphism of homology groups.  The notion of
``homotopy'' we have floating around is that of {\em chain homotopy},
and we define $\pi(X,Y)$ to be the set of chain homotopy classes of
maps from $X$ to $Y$.  Let $I$ denote
the chain complex of free abelian groups whose only non-zero terms are
in degrees $0$ and $1$ and in those degrees is given by 
  \begin{equation}\label{eq:111}
     \begin{aligned} \ZZ\{h \} &\longrightarrow  \ZZ\{e_{0},e_{1} \} \\ h
      &\longmapsto  e_{1}-e_{0}.\end{aligned} 
  \end{equation}
Then a chain homotopy is a map $X\otimes I\to Y$.  Now the two
maps 
  \begin{equation}\label{eq:88}
     X\otimes \ZZ\{e_{i} \} \longrightarrow  X\otimes I 
  \end{equation}
are weak equivalences, so chain homotopic maps are identified in $\ho\chain
R$.  This provides a natural map $\pi(X,Y)\to \ho\chain R(X,Y)$.  By basic
homological algebra, if $X$ is a chain complex of \emph{projective}
$R$-modules and $Y\to Z$ is a weak equivalence then $\pi(X,Y)\to \pi(X,Z)$ is
an isomorphism.  Thus in this case the functor
  \begin{equation}\label{eq:89}
     \pi(X,\slot):\chain R\longrightarrow  \abelian 
  \end{equation}
factors through $\ho\chain R$.  By the Yoneda lemma, the identity map
of $X$, regarded as an element of $\pi(X,X)$, gives a natural (in $Y$)
map $\ho\chain R(X,Y)\to \pi(X,Y)$.  It is straightforward to check
that the composites are both the identity.  So when $X$ is a complex
of projectives, then $\ho\chain R(X,Y)$ is given by $\pi(X,Y)$.  For a
general $X$ one can always find a weak equivalence $\tilde X\to X$
from a complex of projectives to $X$ (a projective resolution).  One
then has the sequence of isomorphisms 
  \begin{equation}\label{eq:90}
     \ho\chain R(X,Y) \approx \ho\chain R(\tilde X,Y) \approx \pi(\tilde X,Y). 
  \end{equation}
  \end{example}

Turning to the case of interest to us, let $\spre$ be the category of
simplicial presheaves (or, equivalently, presheaves of simplicial sets) on
the category $\manifolds$ of smooth manifolds.  Recall
Definition~\ref{thm:24}(iv): a map $\sF_{\bullet }\to \sF'_{\bullet }$ of
simplicial presheaves is a {\em weak equivalence} if the induced map of
stalks is a weak equivalence of simplicial sets.
 
We begin our analysis of $\ho\spre(\sF_{\bullet},\sF'_{\bullet})$ with the
special case in which $\sF'_{\bullet}=\sheaf F'$ is a sheaf, regarded as a
constant simplicial presheaf.  Write $\underline{\pi}_{0}\sF_{\bullet}$ for
the sheaf associated to the presheaf
  \begin{equation}\label{eq:91}
     M \longmapsto \pi_{0}\bigl(\sF_{\bullet}(M)\bigr), 
  \end{equation}
so that $\underline{\pi}_{0}\sF_{\bullet}$ is the sheaf associated to the
presheaf obtained as the coequalizer of
  \begin{equation}\label{eq:92}
  \begin{split}
     \xymatrix{ \sF_{1} \ar@<.5ex>[r]^{d_{0}} \ar@<-.5ex>[r]_{d_{1}} & \sF_{0}.
     } 
  \end{split}
  \end{equation}
(See Definition~\ref{thm:70}.)  Two facts about $\underline{\pi}_{0}$ follow
readily from the definition.  One is that a weak equivalence $\widetilde
\sF_{\bullet}\to \sF_{\bullet}$ induces an isomorphism
$\underline{\pi}_{0}\widetilde \sF_{\bullet}\to
\underline{\pi}_{0}\sF_{\bullet}$, so that $\underline{\pi}_{0}$ defines a
functor on $\ho\spre$.  The other is that the set of maps of simplicial
presheaves
  \begin{equation}\label{eq:93}
     \sF_{\bullet}\longrightarrow \sheaf F' 
  \end{equation}
is naturally isomorphic to the set of {\em sheaf} maps  
  \begin{equation}\label{eq:94}
     \underline{\pi}_{0}\sF_{\bullet}\longrightarrow \sheaf F'. 
  \end{equation}
From this it follows that 
  \begin{equation}\label{eq:95}
     \sF_{\bullet}\longmapsto \spre(\sF_{\bullet},\sheaf F') 
  \end{equation}
factors through $\ho\spre$, and as in our analysis of chain complexes, that
the map 
  \begin{equation}\label{eq:114}
     \spre(\sF_{\bullet},\sheaf F')\xrightarrow{\;\;\cong \;\;}
     \ho\spre(\sF_{\bullet},\sheaf F')   
  \end{equation}
is a bijection.

Now suppose that $\sF'_{\bullet}=\sheaf P$ is a presheaf, regarded as a
constant simplicial presheaf, and let $\ass \sheaf P$ be the associated
sheaf.  Then the canonical map $\sheaf P\to \ass P$ is a weak equivalence (it
is an isomorphism on stalks, as stated in Remark~\ref{thm:59}).  It follows
that one may compute $\ho\cat C(\sF_{\bullet}, \sheaf P)$ via the
isomorphisms
  \begin{equation}\label{eq:96}
     \ho\spre(\sF_{\bullet},\sheaf P)\approx \ho\spre(\sF_{\bullet},\ass \sheaf P)\approx
     \spre(\sF_{\bullet},\ass P). 
  \end{equation}
Summarizing, we have

  \begin{proposition}\label{thm:m1}
 Let $\sheaf P$ be a presheaf, regarded as a constant simplicial presheaf,
and $\ass \sheaf P$ be the associated sheaf.  Then for any
$\sF_{\bullet}\in\spre$, the maps
  \begin{equation}\label{eq:97}
     \ho\spre(\sF_{\bullet},\sheaf P)\approx \ho\spre(\sF_{\bullet},\ass
     \sheaf P)\approx \spre(\sF_{\bullet},\ass \sheaf P) 
  \end{equation}
are isomorphisms.  In particular, if $\sheaf P=\sheaf F'$ is a sheaf,
regarded as a constant simplicial presheaf, then the map
  \begin{equation}\label{eq:98}
     \spre(\sF_{\bullet},\sheaf F') \longrightarrow
     \ho\spre(\sF_{\bullet},\sheaf F')  
  \end{equation}
is an isomorphism.  
  \end{proposition}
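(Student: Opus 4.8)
The plan is to reduce everything to the ``in particular'' assertion---that for a sheaf~$\sheaf F'$ the natural map $\spre(\sF_{\bullet},\sheaf F')\to\ho\spre(\sF_{\bullet},\sheaf F')$ is a bijection---and then recover the general statement formally. Granting the sheaf case: the canonical map $\sheaf P\to\ass\sheaf P$ induces an isomorphism on stalks (Remark~\ref{thm:59}), so as a map of constant simplicial presheaves it is a weak equivalence, hence becomes an isomorphism in~$\ho\spre$ by the universal property of the localization. Composing with that isomorphism gives the first map of~\eqref{eq:97}, and the sheaf case applied to $\sheaf F'=\ass\sheaf P$ gives the second; together they are the asserted isomorphisms.

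So fix a sheaf~$\sheaf F'$. The essential input, already recorded before the statement, is that the functor $\spre(\slot,\sheaf F')\:\spre\op\to\Set$ carries weak equivalences to bijections. I would deduce this from the two properties of~$\underline{\pi}_{0}$ noted there. A map of simplicial presheaves $\sF_{\bullet}\to\sheaf F'$ into the constant simplicial presheaf~$\sheaf F'$ is the same as a presheaf map out of the objectwise coequalizer of $\sF_{1}\rightrightarrows\sF_{0}$ (Definition~\ref{thm:70}), and since $\sheaf F'$ is a sheaf the universal property of sheafification turns these into sheaf maps $\underline{\pi}_{0}\sF_{\bullet}\to\sheaf F'$, so that $\spre(\sF_{\bullet},\sheaf F')\cong\sheaves(\underline{\pi}_{0}\sF_{\bullet},\sheaf F')$ naturally in~$\sF_{\bullet}$. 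Moreover $\underline{\pi}_{0}$ sends a weak equivalence $\widetilde\sF_{\bullet}\to\sF_{\bullet}$ to an isomorphism of sheaves, since $\pi_{0}$ is a homotopy invariant of simplicial sets and commutes with the filtered colimits $\colim_{r\to0}$ defining stalks, so the induced map on each $m$-dimensional stalk of~$\underline{\pi}_{0}$ is a bijection, and a map of sheaves that is a bijection on every stalk is an isomorphism. Combining the two, a weak equivalence induces a bijection $\spre(\sF_{\bullet},\sheaf F')\to\spre(\widetilde\sF_{\bullet},\sheaf F')$.

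Because $\spre(\slot,\sheaf F')$ inverts weak equivalences it factors through the localization, giving a contravariant functor on~$\ho\spre$ whose value at~$\sF_{\bullet}$ is $\spre(\sF_{\bullet},\sheaf F')$ and which agrees with $\spre(\slot,\sheaf F')$ on honest maps. Applying $L$ to maps then produces a natural transformation $\eta$ from $\sF_{\bullet}\mapsto\spre(\sF_{\bullet},\sheaf F')$ to $\sF_{\bullet}\mapsto\ho\spre(\sF_{\bullet},\sheaf F')$, both regarded as functors on~$\ho\spre$, and I would show $\eta$ is an isomorphism exactly as in the chain-complex discussion of Example~\ref{thm:63}: the target is the functor represented by~$\sheaf F'$, so by the Yoneda lemma the element $\id_{\sheaf F'}\in\spre(\sheaf F',\sheaf F')$ determines a natural transformation $\theta$ in the reverse direction, and one checks directly that $\theta\circ\eta=\id$ (an honest map $\varphi$ is carried to $\id_{\sheaf F'}\circ\varphi=\varphi$) and $\eta\circ\theta=\id$ (trace $\id_{\sheaf F'}$ through a zigzag representative, using that $\spre(\slot,\sheaf F')$ inverts the weak equivalences it contains). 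Hence $\eta$ is a natural isomorphism, which is the sheaf case.

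The diagram chase with $\eta$ and $\theta$, and the two properties of~$\underline{\pi}_{0}$, are routine, and the reduction at the start is purely formal. The one step that carries the real content---and the reason the hypothesis that $\sheaf F'$ be a \emph{sheaf} rather than an arbitrary simplicial presheaf cannot be dropped---is the verification that $\spre(\slot,\sheaf F')$ inverts \emph{all} weak equivalences; this is where sheafification and the filtered-colimit description of stalks are used, and it is the precise analogue of the homological fact invoked in Example~\ref{thm:63} that $\pi(X,\slot)$ inverts quasi-isomorphisms when $X$ is a complex of projectives. For a general target the map~\eqref{eq:98} is not a bijection.
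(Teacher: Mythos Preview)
Your proof is correct and follows essentially the same approach as the paper: both arguments reduce to the sheaf case via the weak equivalence $\sheaf P\to\ass\sheaf P$, and both prove the sheaf case by showing $\spre(\slot,\sheaf F')$ factors through $\underline{\pi}_0$ (hence inverts weak equivalences) and then invoking the Yoneda argument from Example~\ref{thm:63}. You spell out in more detail the two facts about~$\underline{\pi}_0$ that the paper says ``follow readily from the definition,'' and you make the Yoneda step explicit where the paper simply points back to the chain-complex analogy, but the architecture is the same.
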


\noindent
 Note that the domain $\spre(\sF_{\bullet},\sheaf F')$ of~\eqref{eq:98} is the
equalizer of
  \begin{equation}\label{eq:112}
  \begin{split}
     \xymatrix@1{\presheaves(\sF_0,\sF')\;\ar@<.5ex>[r]
     \ar@<-.5ex>[r] &\;\presheaves(\sF_1,\sF').} 
  \end{split}
  \end{equation}

  \begin{remark}[]\label{thm:72}
 If $\sP$ is a constant simplicial presheaf, then $\underline\pi_{0}\sP$ is
just the associated sheaf $\ass \sP$.  When $\sP$ and $\sP'$~ are presheaves,
regarded as constant simplicial presheaves, Proposition~\ref{thm:m1}
therefore provides an isomorphism
  \begin{equation}\label{eq:99}
     \spre(\ass \sP,\ass \sP')\approx \ho\spre(\sP,\sP'). 
  \end{equation}
Put differently, the homotopy theory of simplicial presheaves knows about
sheaves, so even if we were only interested in sheaves and presheaves,
working in the homotopy theory of simplicial presheaves incorporates the
fundamental relationship between them.  This highlights one role played by
abstract homotopy theory.  It can be used to locate objects constrained by
algebraic conditions, like the sheaf condition, within a broader context more
suitable for doing homotopy theory.
  \end{remark}

Proposition~\ref{thm:m1} computes the maps in the homotopy category of
presheaves when the codomain is a constant simplicial presheaf.  This is what
is used in the remainder of this paper.  A general formula is the content of
the Verdier Hypercovering Theorem.  We do not attempt a complete exposition,
and the reader may skip the remainder of this section without penalty.
 
We begin with a motivating example.

  \begin{example}[]\label{thm:64} Let $X$~be a smooth manifold and $\sF_X$
the associated discrete simplicial sheaf.  A map $\sF_X\to \BNGt$ is a
connection on the trivial $G$-bundle over~$X$.  Let ~$\sU$ be an open cover
of~$X$ and $(\sF_{\sU})_{\bullet }$ the associated representable simplicial
sheaf.  Then a map $(\sF_{\sU})_{\bullet }\to\BNGt$ is a $G$-bundle with
connection on~$X$ together with a trivialization on each open set of the
cover.  Homotopic maps give isomorphic $G$-bundles with connection.  Since
any principal $G$-bundle admits local trivializations, the colimit 
  \begin{equation}\label{eq:121}
      \colim_{\sU} \pi \bigl((\sF_{\sU})_{\bullet },\BNGt \bigr) \approx
      \ho\spre\bigl(\sFX, \BNG\bigr) 
  \end{equation}
over all open covers~$\sU$ is in bijection with the set of isomorphism
classes of $G$-connections on~$X$.  Then applying Proposition~\ref{thm:29} we
deduce an isomorphism
  \begin{equation}\label{eq:122}
     \colim_{\sU} \pi \bigl((\sF_{\sU})_{\bullet },\BNGt \bigr) \approx
      \ho\spre(\sFX,\BNGt ) . 
  \end{equation}
This is a special case of the Verdier theorem in which covers, rather than
hypercovers, suffice to compute maps in the homotopy category.
  \end{example}

We now state the general result.  Let $\hypercover_{\sF_{\bullet}}$ be the
category of hypercovers of a simplicial presheaf~ $\sF_{\bullet}$ and
(simplicial) homotopy classes of maps.  Given a simplicial presheaf
$\sF'_{\bullet}$ form
  \begin{equation}\label{eq:104}
     \pi'(\sF_{\bullet},\sF'_{\bullet}) =
     \colim_{\sU_{\bullet }\in\hypercover_{\sF_{\bullet}}} \pi(\sU_{\bullet
     },\sF'_{\bullet}).   
  \end{equation}
Since each hypercovering $\sU_{\bullet}\to \sF_{\bullet}$ is a weak
equivalence, there is a natural map
  \begin{equation}\label{eq:105}
     \pi'(\sF_{\bullet},\sF'_{\bullet}) \longrightarrow
     \ho\spre(\sF_{\bullet},\sF'_{\bullet}).  
  \end{equation}

\begin{theorem}[Verdier hypercovering theorem]  \label{thm:67}
 If $\sF'_{\bullet}$ is stalkwise a Kan complex then the map
  \begin{equation}\label{eq:106}
     \pi '(\sF_{\bullet},\sF'_{\bullet})\longrightarrow
     \ho\spre(\sF_{\bullet},\sF'_{\bullet})  
  \end{equation}
is a bijection. 
\end{theorem}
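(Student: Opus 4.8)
The plan is to recognise $\spre$, equipped with the stalkwise weak equivalences~$\weq$, as an abstract homotopy theory in which the simplicial presheaves that are stalkwise Kan complexes play the role of the fibrant objects, and then to run the standard ``calculus of fractions'' computation of the morphism sets of the localisation. Following K.~Brown, call a simplicial presheaf \emph{locally fibrant} if each of its stalks is a Kan complex and call a map a \emph{local fibration} if it is a Kan fibration on every stalk; with the monomorphisms as cofibrations, every simplicial presheaf is cofibrant, so formally $\ho\spre(\sF_{\bullet},\sF'_{\bullet})$ is the set $\pi(\sF_{\bullet},R\sF'_{\bullet})$ of simplicial homotopy classes of maps into a \emph{fibrant} replacement $R\sF'_{\bullet}$. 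The hypothesis of the theorem supplies only that $\sF'_{\bullet}$ is locally fibrant, not that it is fibrant (equivalently, not that it satisfies descent), so the whole content is to understand the map $\sF'_{\bullet}\to R\sF'_{\bullet}$ precisely enough to see that it becomes invisible upon passing to the colimit over hypercovers.

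Two formal inputs come first. (a)~Any hypercover $\sU_{\bullet}\to\sF_{\bullet}$ is stalkwise a trivial Kan fibration: each matching map $\sU_{n}\to M_{n}\sU_{\bullet}$ is a stalkwise surjection, which is exactly the surjectivity criterion characterising a trivial Kan fibration of simplicial sets; hence every hypercover lies in~$\weq$ and becomes an isomorphism in $\ho\spre$. (b)~Any two hypercovers of $\sF_{\bullet}$ admit a common refinement (a degreewise fibre product, itself refined to a hypercover by the Godement construction), so the colimit defining $\pi'$ in~\eqref{eq:104} is filtered and the map~\eqref{eq:106} assembles from the compatible maps $\pi(\sU_{\bullet},\sF'_{\bullet})\to\ho\spre(\sU_{\bullet},\sF'_{\bullet})\cong\ho\spre(\sF_{\bullet},\sF'_{\bullet})$. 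It then remains to prove surjectivity and injectivity of~\eqref{eq:106}. For surjectivity I would straighten an arbitrary morphism of $\ho\spre(\sF_{\bullet},\sF'_{\bullet})$ — a priori a long zig-zag — into a single roof $\sF_{\bullet}\xleftarrow{\;\sim\;}\sG_{\bullet}\to\sF'_{\bullet}$ and then dominate $\sG_{\bullet}$ by a hypercover of~$\sF_{\bullet}$, producing an honest map out of a hypercover. For injectivity I would show that two maps $\sU_{\bullet}\rightrightarrows\sF'_{\bullet}$ agreeing in $\ho\spre$ become simplicially homotopic after refining~$\sU_{\bullet}$: form a path object for $\sF'_{\bullet}$ — again locally fibrant, since being stalkwise a Kan complex is preserved by the path-object construction — lift the given equivalence to a stalkwise homotopy, and spread that homotopy out over a sufficiently fine cover, using that each stalk is a filtered colimit over shrinking balls, so that the finitely many compact simplices of a homotopy propagate from a germ to a genuine homotopy on a small neighbourhood.

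The step I expect to be the main obstacle is precisely this last bookkeeping and its global coherence. The fibrant replacement $\sF'_{\bullet}\to R\sF'_{\bullet}$ is built by a transfinite process of gluing in the simplices forced by hypercovers (the ``descent completion''), and one must show that each stage alters $\pi(\sU_{\bullet},\slot)$ only after $\sU_{\bullet}$ is replaced by a finer hypercover, so that nothing is lost or gained in the colimit over~$\hypercover_{\sF_{\bullet}}$. Because $\sF'_{\bullet}$ is only \emph{locally} fibrant there is no single global path object, and every lifting and every homotopy above must be threaded through stalks while repeatedly passing to refinements, with the refinements chosen compatibly across the simplices involved so that the outcome survives to the colimit. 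Once surjectivity and injectivity are in hand, a direct check that the resulting bijection is the canonical map~\eqref{eq:105} completes the proof.
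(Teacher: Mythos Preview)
The paper does not prove this theorem. Immediately after the statement it simply records the provenance: the result is an elaboration of Verdier's Th\'eor\`eme~7.4.1 in SGA4, was formulated in this form in Ken Brown's thesis, and was further refined by Jardine. No argument is given; the theorem is quoted as background and is not used in the proofs of the paper's main results (Proposition~\ref{thm:m1}, which \emph{is} proved, suffices for everything in \S\ref{sec:6}--\S\ref{sec:7}).

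Your sketch is a reasonable outline of Brown's approach via a category of fibrant objects, which is exactly the reference the paper points to. A few comments on the plan itself. First, you momentarily conflate two frameworks: invoking monomorphisms as cofibrations and a fibrant replacement~$R$ is the Jardine local model structure, whereas the locally fibrant/local fibration language is Brown's category of fibrant objects; Brown's argument stays entirely inside the locally fibrant world and never needs~$R$, so the paragraph about ``understanding $\sF'_{\bullet}\to R\sF'_{\bullet}$'' is a detour rather than the crux. Second, in~(b) the correct statement is that $\hypercover_{\sF_{\bullet}}$ is \emph{cofiltered} (so the colimit in~\eqref{eq:104} is filtered after reversing arrows); the common refinement is obtained not merely by a degreewise fibre product but by Brown's factorisation lemma applied to the diagonal, which is where local fibrancy of the target enters. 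Third, your surjectivity and injectivity steps are the right shape, and the ``main obstacle'' you flag---propagating stalkwise homotopies to honest homotopies after refinement---is precisely what Brown's axioms (path objects for locally fibrant objects, factorisation into trivial cofibration followed by local fibration) are designed to handle cleanly, so once you commit to that framework the bookkeeping is less ad hoc than your last paragraph suggests.
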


\noindent
 The Verdier hypercovering theorem is an elaboration of Verdier's
Th\'eor\`eme~7.4.1 in \cite{SGA}.  It was originally formulated in the
above manner in the thesis of Ken Brown~\cite{Br} (see specifically
Example~2, Theorem~1, and Theorem~2).  It was further extended and refined by
Jardine (see~\cite{J} and the references therein).

  \begin{remark}[]\label{thm:68}
  The Verdier hypercovering theorem contains, as a special case, a formula
for describing $\ass\sheaf{P}$ in terms of $\sheaf{P}$.
  \end{remark}

Abstract homotopy theory was introduced by Quillen in~\cite{Q1} and~\cite{Q2}
under the name of ``Model Categories.''  His original applications were to
finding an algebraic model for rational homotopy theory, and for defining the
``cotangent complex'' of a map of commutative rings, which plays the role of
the cotangent bundle when the map is not smooth.  In the original
applications the emphasis was on the systematic comparison of resolutions,
and alongside the weak equivalences Quillen specified two further collections
of maps, the {\em cofibrations} and {\em fibrations}, and some properties
relating the three classes.  In terms of these he defined a notion of
homotopy, and produced the general formula
  \begin{equation}\label{eq:109}
     \ho\cat C(X,Y) \approx  \pi(X', Y'). 
  \end{equation}
Here $X'$ is an object equipped with a weak equivalence $X'\to X$ and having
the property that the unique map it receives from the initial object
$\emptyset\to X$ is a cofibration, $Y'$ is an object equipped with a weak
equivalence $Y\to Y'$ and having the property that its unique map $Y'\to\ast$
to the terminal object is a fibration, and $\pi(X',Y')$ is the quotient of
$\cat C(X',Y')$ by the relation of homotopy.  One thinks of $X'$ as analogous
to a projective resolution of $X$ and $Y'$ as analogous to an injective
resolution of $Y$.  The use of homotopical algebra to implement algebraic
structures originates in the work of
Bousfield~\cite{Bo1,Bo2} and plays an important role
in the study of the moduli spaces of interest in homotopy theory.  The
special role played by the weak equivalences was apparent early on, but it
was Kan et al.~\cite{DHKS} who undertook to do homotopical algebra
solely with the weak equivalences.  A good introduction to Model Categories
can be found in~\cite{DS}.

   \section{The de Rham complex of $\BNG$}\label{sec:6}
% lastsubsec@000

At last the abstractions and tautologies give way to theorems.  

  \begin{definition}[]\label{thm:75}
 Let $\sF_{\bullet }$~be a simplicial presheaf.  The \emph{de Rham complex}
of~$\sF_{\bullet }$ is 
  \begin{equation}\label{eq:117}
     \ho\spre\bigl(\sF_{\bullet }\;,\;\Omega ^0\xrightarrow{\;d\;}\Omega
     ^1\xrightarrow{\;d\;}\cdots \bigr) \cong \ho\spre(\sF_{\bullet },\Omega
     ^0)\xrightarrow{\;d\;} \ho\spre(\sF_{\bullet },\Omega
     ^1)\xrightarrow{\;d\;} \cdots
  \end{equation} 
  \end{definition}

\noindent
 By Proposition~\ref{thm:m1} each term in the complex~\eqref{eq:117} can be
computed as an equalizer~\eqref{eq:112}: 
  \begin{equation}\label{eq:118}
  \begin{split}
     \ho\spre(\sF_{\bullet },\Omega ^n) =\ker\left\{ \xymatrix@1{\Omega
     ^{n }(\sF_0)\;\ar@<.5ex>[r]^>(.7){\rho _0} \ar@<-.5ex>[r]_>(.7){\rho
     _1} &\;\Omega ^{n }(\sF_1)}  \right\} ,
  \end{split}
  \end{equation}
where $\rho _0,\rho _1$ are induced from the structure maps
$\xymatrix@1{\sF_0\;&\;\sF_1\;\ar@<.5ex>[l]^<(.3){p_1}
\ar@<-.5ex>[l]_<(.3){p_0}} $.  

  \begin{remark}[]\label{thm:77}
 It follows from Definition~\ref{thm:75} that weakly equivalent simplicial
presheaves have isomorphic de Rham complexes.  Therefore, the de Rham complex
is a homotopy invariant.  Below we compute the de Rham complex of~$\BNG$
using a convenient weakly equivalent simplicial presheaf, based on
Proposition~\ref{thm:29}.
  \end{remark}

  \begin{proposition}[]\label{thm:54}
 Suppose a Lie group~$G$ acts on a sheaf~$\sF$.  Then the de Rham complex of
the associated simplicial sheaf
  \begin{equation}\label{eq:74}
  \begin{split}
     \xymatrix@1{\sF\;&\;G\times \sF\;\ar@<.5ex>[l]^<(.3){p_1}
     \ar@<-.5ex>[l]_<(.3){p_0} \ar@{-->}@<1ex>[r] \ar@{-->}@<-1ex>[r] &
     \;G\times G\times 
     \sF\;\cdots\ar@<2ex>[l] \ar[l] \ar@<-2ex>[l] \cdots } 
  \end{split}
  \end{equation}
is the equalizer
  \begin{equation}\label{eq:75}
  \begin{split}
     \ker\left\{\xymatrix@1{\Omega ^{\bullet }(\sF)\;\ar@<.5ex>[r]^>(.7){\rho _0}
     \ar@<-.5ex>[r]_>(.7){\rho _1} &\;\Omega ^{\bullet }(G\times \sF)}\right\} .
  \end{split}
  \end{equation}
  \end{proposition}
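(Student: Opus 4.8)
The plan is to unwind Definition~\ref{thm:75} and then invoke Proposition~\ref{thm:m1}. Write $\sF_{\bullet }$ for the action groupoid~\eqref{eq:74}, so that $\sF_0=\sF$, $\sF_1=\sF_G\times\sF$, and the two face maps $\sF_1\to\sF_0$ are the projection $p_0$ and the action map $p_1$. By Definition~\ref{thm:75} the de Rham complex of $\sF_{\bullet }$ is the complex whose degree-$n$ term is $\ho\spre(\sF_{\bullet },\Omega ^n)$, with differential induced by $d\:\Omega ^n\to\Omega ^{n+1}$. Since each $\Omega ^n$ is a sheaf, Proposition~\ref{thm:m1} together with~\eqref{eq:118} identifies this term with the equalizer $\ker\{\presheaves(\sF_0,\Omega ^n)\rightrightarrows\presheaves(\sF_1,\Omega ^n)\}$, the two arrows being the pullbacks $p_0^*$ and $p_1^*$ along the face maps.

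Next I would rewrite this in the notation $\Omega ^n(\sF):=\presheaves(\sF,\Omega ^n)$ of Remark~\ref{thm:9}, so that the degree-$n$ term becomes $\ker\{\Omega ^n(\sF)\rightrightarrows\Omega ^n(G\times\sF)\}$, with the two maps being exactly the $\rho _0,\rho _1$ appearing in~\eqref{eq:75} (namely $p_0^*$ and $p_1^*$). Assembling over all $n\ge0$, and using that the exterior derivative is natural and hence commutes with both $p_0^*$ and $p_1^*$—so that the equalizers in successive degrees form a subcomplex of $\bigl(\Omega ^{\bullet }(\sF),d\bigr)$—identifies the de Rham complex of~\eqref{eq:74} with the subcomplex $\ker\{\Omega ^{\bullet }(\sF)\rightrightarrows\Omega ^{\bullet }(G\times\sF)\}$, which is~\eqref{eq:75}.

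I expect the only step requiring any thought to be the verification that the higher simplices $\sF_G\times\sF_G\times\sF,\dots$ of~\eqref{eq:74} contribute nothing: a priori a map of simplicial presheaves into $\Omega ^n$ carries compatibility data at every simplicial level. The resolution is built into Proposition~\ref{thm:m1}: because $\Omega ^n$ is a constant simplicial sheaf, a map $\sF_{\bullet }\to\Omega ^n$ is the same thing as a sheaf map out of $\underline{\pi}_{0}\sF_{\bullet }$, equivalently an element of the equalizer of $\sF_1\rightrightarrows\sF_0$, so only the bottom two levels of~\eqref{eq:74} enter. All of the homotopy theory is concentrated in that single appeal to Proposition~\ref{thm:m1}; the rest is bookkeeping. (One may also note in passing that~\eqref{eq:74} is genuinely a simplicial \emph{sheaf}, each level being a finite product of the sheaves $\sF$ and $\sF_G$, although the argument only uses that the target $\Omega ^n$ is a sheaf.)
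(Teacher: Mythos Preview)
Your proposal is correct and follows exactly the paper's approach: the paper disposes of this proposition with the single sentence ``This is immediate from~\eqref{eq:118},'' and your argument is precisely an unpacking of that reference (Definition~\ref{thm:75} plus Proposition~\ref{thm:m1} yielding~\eqref{eq:118}, specialized to the action groupoid). Your added remarks about the higher simplices and the compatibility with~$d$ are accurate elaborations of what is implicit in~\eqref{eq:118}.
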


\noindent 
 This is immediate from \eqref{eq:118}.

The equalizer of~$\rho _0$ and~$\rho _1$ has an interpretation as the
\emph{basic subcomplex} $\Omega ^{\bullet }(\sF)\bas\subset \Omega ^{\bullet
}(\sF)$, which we now define.  We begin by reviewing some standard
constructions in de Rham theory.  Suppose $G$~is a Lie group which acts
smoothly on a manifold~$X$.  Then the infinitesimal action associates to each
$\xi \in \mathfrak{g}=\Lie(G)$ a vector field~$\hx$ on~$X$, and so a
contraction operator
  \begin{equation}\label{eq:79}
     \iota _{\hx}\:\Omega ^{\bullet }(X)\to\Omega ^{\bullet -1}(X) 
  \end{equation}
of   degree~$-1$   and   a   Lie  derivative   
  \begin{equation}\label{eq:80}
     \sL_{\hx}\:\Omega ^{\bullet }(X)\to\Omega ^{\bullet }(X) 
  \end{equation}
of degree~$0$.  They are related by Cartan's formula $\sL_{\hx}=d\iota _{\hx}
+ \iota _{\hx}d$.  Replace~$X$ by  a sheaf~$\sF$.  Fix $\omega  \in \Omega ^{\bullet
}(\sF)$.   Then for any  test manifold~$M$  and $\phi  \in \sF(M)$  we obtain
$\omega (M,\phi )\in \Omega ^{\bullet }(M)$.  The action~$a$ determines a map
  \begin{equation}\label{eq:72}
     G\times M\xrightarrow{\;\id\times \phi \;}G\times \sF\xrightarrow{\quad
     a\quad 
     }\sF\xrightarrow{\quad \omega \quad }\Omega ^{\bullet } 
  \end{equation}
which is the differential form $\eta =a^*\omega (G\times M,\id\times \phi
)\in \Omega ^{\bullet }(G\times M)$.  (As usual we write~`$G$'
for~`$\sF_G$'.)  For~$\xi \in \mathfrak{g}$ define $\iota _\xi \omega \subset
\Omega ^{\bullet -1}(\sF)$ by
  \begin{equation}\label{eq:73}
     \iota _\xi \omega (M,\phi ) = \iota _{\bx}\eta \res{\{e\}\times M}, 
  \end{equation}
where $\bx$~is the vector field on~$G\times M$ defined by~$\xi $ (it points
along the $G$-factor) and $e\in G$~is the identity element.  The Lie
derivative~$\sL_\xi \omega $ is defined by a formula similar
to~\eqref{eq:73}: 
  \begin{equation}\label{eq:76}
     \sL_\xi \omega (M,\phi ) = \sL_{\bx}\eta \res{\{e\}\times M}. 
  \end{equation}
The Lie derivative and contraction satisfy Cartan's formula.

  \begin{definition}[]\label{thm:53}
 Suppose a Lie group~$G$ acts on a sheaf~$\sF$.  Then the differential form
$\omega \in \Omega ^{\bullet }(\sF)$~is \emph{basic} if {\rm(i)}~$a^*\omega
\res{\{g\}\times \sF} =\omega $ for all~$g\in G$, and {\rm(ii)}~$\iota _\xi
\omega =0$ for all~$\xi \in \mathfrak{g}$.
  \end{definition}

\noindent
 Note that~(i) is the condition that $\omega $~be $G$-invariant.  The
differential of a basic form is basic, so basic forms comprise a subcomplex
$\Omega ^{\bullet }(\sF)\bas\subset \Omega ^{\bullet }(\sF)$.  Condition~(ii)
can be rephrased in terms of a global condition on~$G\times \sF$.

  \begin{proposition}[]\label{thm:76}
 The equalizer~\eqref{eq:75} is the basic subcomplex~$\Omega ^{\bullet
}(\sF)\bas$.  
  \end{proposition}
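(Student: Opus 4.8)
The plan is to prove degreewise equality of the two subcomplexes of $\Omega ^{\bullet }(\sF)$; that each is a subcomplex is already known (the differential of a basic form is basic, and the equalizer of the chain maps $\rho _0,\rho _1$ is automatically $d$-closed). By Proposition~\ref{thm:m1} in the form~\eqref{eq:118}, the equalizer~\eqref{eq:75} in degree~$n$ is
\[
   \{\,\omega \in \Omega ^n(\sF):p_0^*\omega =a^*\omega \text{ in }\Omega ^n(G\times \sF)\,\},
\]
where $p_0\colon G\times \sF\to\sF$ is the projection and $a$ the action. So it suffices to show that $p_0^*\omega =a^*\omega $ precisely when $\omega $ is basic in the sense of Definition~\ref{thm:53}.

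First I would reduce this identity of forms on $G\times \sF$ to ordinary identities of forms on manifolds. A map of presheaves $\sF_N\to G\times \sF$ is a pair $(h,\phi )$ with $h\in \Man(N,G)$ and $\phi \in \sF(N)$, and it factors as $\sF_N\xrightarrow{(h,\id_N)}\sF_{G\times N}\xrightarrow{\id_G\times \phi }G\times \sF$; hence two forms on $G\times \sF$ coincide iff their pullbacks along $\id_G\times \phi \colon G\times M\to G\times \sF$ coincide for every test manifold $M$ and every $\phi \in \sF(M)$. Put $F=a\circ(\id\times \phi )\colon G\times M\to\sF$ and $\eta _\phi :=F^*\omega =a^*\omega (G\times M,\id\times \phi )$, the form $\eta $ of~\eqref{eq:72}. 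Since $p_0\circ(\id_G\times \phi )=\phi \circ \mathrm{pr}_M$, we have $(\id_G\times \phi )^*p_0^*\omega =\mathrm{pr}_M^*\bigl(\omega (M,\phi )\bigr)$, so $p_0^*\omega =a^*\omega $ is equivalent to: for all $M$ and all $\phi \in \sF(M)$, the form $\eta _\phi $ on $G\times M$ is pulled back from $M$ (and then necessarily from $\eta _\phi \res{\{e\}\times M}$).

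Next I would feed in two pieces of de Rham calculus on the trivial bundle $G\times M\to M$: (1) a form is a pullback from $M$ iff it is invariant under the left-translation action $L_g\colon (g',m)\mapsto (gg',m)$ of $G$ on the first factor and is annihilated by contraction with every fundamental vector field $\bar\xi $, $\xi \in \mathfrak g$ (these being the right-invariant fields, whose values span $T_{g'}G$ at each $g'$), in which case it equals the pullback of its restriction to $\{e\}\times M$; and (2) the naturality identity $\iota _{\bar\xi }\eta _\phi =F^*(\iota _\xi \omega )=a^*(\iota _\xi \omega )(G\times M,\id\times \phi )$, read off from~\eqref{eq:72}--\eqref{eq:73}. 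Granting these, the two clauses of Definition~\ref{thm:53} match up: from $F\circ L_g=A_g\circ F$ (with $A_g\colon \sF\to\sF$ the action of the constant $g\in G$) one gets $L_g^*\eta _\phi =F^*(A_g^*\omega )$, and restricting to the slices $\{e\}\times M$ and $\{g\}\times M$ turns ``all $\eta _\phi $ are $L_g$-invariant'' into ``$\omega (M,g\cdot \phi )=\omega (M,\phi )$ for all $g,M,\phi $'', which is Definition~\ref{thm:53}(i) (it unwinds to $a^*\omega \res{\{g\}\times \sF}=\omega $); while~(2) turns ``$\iota _{\bar\xi }\eta _\phi =0$ throughout $G\times M$'' into ``$\iota _\xi \omega =0$ for all $\xi \in \mathfrak g$'', which is Definition~\ref{thm:53}(ii) --- the point being that once $\iota _\xi \omega =0$ for every $\xi $, identity~(2) makes $\iota _{\bar\xi }\eta _\phi $ vanish at a stroke. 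When both clauses hold, (1) yields $\eta _\phi =\mathrm{pr}_M^*\bigl(\eta _\phi \res{\{e\}\times M}\bigr)=\mathrm{pr}_M^*\omega (M,\phi )$ (the identity of $G$ acts trivially, so $F$ restricted to $\{e\}\times M$ is $\phi $), which is what the reduction demanded; the converse is immediate. This closes the circle of equivalences.

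The one genuinely non-formal step is the naturality identity~(2): it must be extracted from the definition~\eqref{eq:73} of $\iota _\xi $ on a sheaf, where there is no honest vector field on $\sF$ to contract against. For representable $\sF$ it is the elementary statement that pullback along the $G$-equivariant map $F$ intertwines $\iota _{\bar\xi }$ with contraction by the fundamental vector field $\hat\xi $ on $\sF$, and~\eqref{eq:73} is rigged precisely so that this persists for a general sheaf; checking it is a routine test-manifold-by-test-manifold verification. I expect that to be the main (and essentially the only) obstacle; everything else is the Yoneda-type reduction of the second paragraph and the classical product-bundle fact~(1).
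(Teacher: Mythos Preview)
Your proof is correct and follows the same approach as the paper: reduce the equalizer condition $p_0^*\omega = a^*\omega$ on $G\times\sF$ to the classical fact that a form on $G\times M$ is pulled back from~$M$ iff it is $G$-invariant and annihilated by the vector fields~$\bar\xi$, then match those two conditions with clauses~(i) and~(ii) of Definition~\ref{thm:53}. The paper's proof is quite terse (it states this classical fact and leaves the verification, including the reduction from the sheaf to the test-manifold level, to the reader), whereas you carry out that reduction explicitly and identify the naturality identity $\iota_{\bar\xi}\eta_\phi = F^*(\iota_\xi\omega)$ as the one nontrivial step---which indeed follows from associativity of the action via $\eta_F = \mu^*\eta_\phi$ for the multiplication map $\mu\colon G\times G\times M\to G\times M$.
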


  \begin{proof}
 Recall that $p_0$~is projection and $p_1$~is the action map~$a$; then $\rho
_0,\rho _1$~are the respective pullbacks.  We claim that for any (test)
manifold~$M$ a form~$\omega \in \df M$ is basic if and only if $p_0^*\omega
=p_1^*\omega \in \df{G\times M}$, where $p_0\:G\times M\to M$ is projection
and $p_1\:G\times M\to M$ is the action.  We make two remarks to aid the
reader in the verification.  First, an element~$\xi \in \mathfrak{g}$ induces
a vector field~$\hx$ on~$M$ and a vector field~$\bx$ on~$G\times M$, and
these vector fields are ``related'' by the action map~$p_1$: in other words,
$(p_1)_*(\bx)=\hx$.  It follows that for any~$\omega \in \df M$, we have
$\iota _{\bx}p_1^*\omega =p_1^*\iota _{\hx}\omega $.  Second, a form~$\eta
\in \df{G\times M}$ is pulled back from~$M$ if and only if $\eta $~is
$G$-invariant and $\iota _{\bx}\eta =0$ for all~$\xi \in \mathfrak{g}$.
  \end{proof}

We introduce the following standard differential graded algebra.

  \begin{definition}[]\label{thm:31}
 Let $V$~be a real vector space.  Then the \emph{Koszul complex}~$\Kos V$ built
on~$V$ is the differential graded algebra 
  \begin{equation}\label{eq:58}
     \Kos V={\textstyle\bigwedge} ^{\bullet }V\otimes \Sym^{\bullet }V 
  \end{equation}
 with differential
  \begin{equation}\label{eq:a9}
     d_K(v)= \tv,\quad d_K(\tv)=0,\qquad v \in V=
     {\textstyle\bigwedge} ^1V,\quad \tv\in V=\Sym^1V. 
  \end{equation}  
  \end{definition}

\noindent
 Here `$\tv$'~denotes $v\in V$ regarded as an element of~$\Sym^1V$.  Note
that $\Kos V$ is generated by~$V={\textstyle\bigwedge} ^1V$ as a differential
graded algebra.  We grade the generators by $\deg {\textstyle\bigwedge}
^1V=1$; it follows that $\deg \,\Sym^1V=2$.  It is a standard result that
$\Kos V$ has trivial cohomology:
  \begin{equation}\label{eq:a10}
     H^{\bullet }(\Koss V,d_K) = \RR.
  \end{equation}
This follows for finite dimensional~$V$ (which is all we need) from the
isomorphism $\Kos (V_1\oplus V_2)\cong \Kos V_1\otimes \Kos V_2$ and the case
when $V$~is 1-dimensional.
 
Our first result is the following. 

  \begin{theorem}[]\label{thm:32}
 The de Rham complex of~$\ENG$ is $\bigl(\Kos \mathfrak{g}^*,d_K\bigr)$.   
  \end{theorem}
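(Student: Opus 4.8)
The plan is to replace the homotopy-theoretic left side of the statement by a concrete invariant-theory problem about $\mathfrak{g}$-valued $1$-forms, and then to solve that problem.  By Example~\ref{thm:27} the simplicial sheaf $\ENG$ is weakly equivalent, via~\eqref{eq:24}, to the discrete simplicial sheaf $\Otg$, and by Remark~\ref{thm:77} weakly equivalent simplicial presheaves have isomorphic de Rham complexes, so it suffices to compute the de Rham complex of $\Otg$.  Each $\Omega^n$ is a sheaf, so Proposition~\ref{thm:m1} gives $\ho\spre(\Otg,\Omega^n)\cong\spre(\Otg,\Omega^n)$, and since $\Otg$ is discrete in the simplicial direction the latter equals $\presheaves(\Otg,\Omega^n)=\Omega^n(\Otg)$.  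Thus the de Rham complex of $\ENG$ is $\bigl(\Omega^{\bullet}(\Otg),d\bigr)$, the complex whose degree-$n$ term is the set of \emph{natural} $n$-forms: rules $\omega$ assigning to each $\alpha\in\Omega^1(M;\mathfrak{g})$ a form $\omega(\alpha)\in\Omega^n(M)$ with $f^*\omega(\alpha)=\omega(f^*\alpha)$ for every smooth $f\:M'\to M$.

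Next I would build a comparison map.  Under~\eqref{eq:24} the universal connection $\Tu$ of~\eqref{eq:27} corresponds to the tautological $\mathfrak{g}$-valued $1$-form $\vartheta$ on $\Otg$ with $\vartheta(M,\alpha)=\alpha$; pairing it against $\mathfrak{g}^*$ defines a linear map $\mathfrak{g}^*\to\Omega^1(\Otg)$.  By the defining relations~\eqref{eq:a9}, $\Kos\mathfrak{g}^*$ is the free graded-commutative differential graded algebra on $\mathfrak{g}^*=\bigwedge^1\mathfrak{g}^*$ placed in degree $1$ (the degree-$2$ generators being the $d_K$ of the degree-$1$ ones); equivalently, a morphism of differential graded algebras out of $\Kos\mathfrak{g}^*$ is the same datum as a degree-$1$ linear map out of $\mathfrak{g}^*$.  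Hence the map above extends uniquely to a morphism $\Phi\:(\Kos\mathfrak{g}^*,d_K)\longrightarrow(\Omega^{\bullet}(\Otg),d)$, sending $\lambda\in\bigwedge^1\mathfrak{g}^*$ to $\langle\lambda,\vartheta\rangle$ and $\tilde\lambda\in\Sym^1\mathfrak{g}^*$ to $\langle\lambda,d\vartheta\rangle$.  It remains to show $\Phi$ is bijective.

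For surjectivity — that every natural $n$-form on $\Otg$ is a constant-coefficient polynomial in the components of $\vartheta$ and $d\vartheta$ — I would argue in three steps.  (a)~A natural form is determined by its restrictions to the balls $B^m(r)$, hence by natural operations on germs at the origin of $\mathfrak{g}$-valued $1$-forms on $\RR^m$; Lemma~\ref{thm:a4} shows such an operation has finite order, so $\omega(\alpha)_0$ depends only on a finite jet $j^k_0\alpha$.  (b)~The appendix theorem on set-theoretic transformations of polynomial functors then shows that this dependence is polynomial, while naturality under the dilations $x\mapsto tx$ of $\RR^m$ forces it to be homogeneous of weighted degree $n$ — where an $n$-form component has weight $n$, $\alpha$ has weight $1$, and the $l$-th derivative of $\alpha$ has weight $1+l$ — so $\omega(\alpha)_0$ is a jet-group-equivariant polynomial of bounded degree in $j^k_0\alpha$ valued in $\bigwedge^n(\RR^m)^*$.  (c)~Equivariance under the prolongation part of the jet group of local diffeomorphisms absorbs the symmetric part of the first derivative of $\alpha$ and all higher derivatives, leaving only $\alpha$ itself and its exterior derivative $d\alpha$ (the antisymmetric part of the first derivative); and $GL_m$-equivariance — the only $GL_m$-invariant tensor that pairs indices being the identity $\RR^m\otimes(\RR^m)^*$, which cannot be used since every index in sight is lowered — then forces $\omega(\alpha)_0$ to be a linear combination of wedges $\alpha^{a_1}\wedge\cdots\wedge\alpha^{a_p}\wedge(d\alpha)^{b_1}\wedge\cdots\wedge(d\alpha)^{b_q}$ with $p+2q=n$.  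Identifying these with $(\Kos\mathfrak{g}^*)^n=\bigoplus_{p+2q=n}\bigwedge^p\mathfrak{g}^*\otimes\Sym^q\mathfrak{g}^*$, we conclude $\Phi$ is onto.

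For injectivity, the classes $\vartheta^a=\Phi(e^a)$ and $d\vartheta^a=\Phi(\tilde{e}^a)$, for a basis $e^a$ of $\mathfrak{g}^*$, obey no relations beyond those of $\Kos\mathfrak{g}^*$: evaluating on a $\mathfrak{g}$-valued $1$-form on a high-dimensional ball whose component $1$-forms and their exterior derivatives are, at a generic point, as algebraically independent as graded-commutativity allows shows that a nonzero element of $\Kos\mathfrak{g}^*$, written in the monomial basis in the $e^a$ and $\tilde{e}^a$, maps to a nonzero form.  Thus $\Phi$ is an isomorphism of graded algebras, and being a chain map by construction it identifies the de Rham differential of $\ENG$ with $d_K$.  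I expect the real work to be step~(c) of surjectivity, together with its prerequisites — the finite-order Lemma~\ref{thm:a4} and the polynomial-functor theorem of the appendix; this is precisely the differential-geometric and invariant-theoretic heart of the argument, carried out in~\S\ref{sec:7}, while everything around it is formal.
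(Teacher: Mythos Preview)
Your outline is correct and matches the paper's architecture: reduce to $\Omega^{\bullet}(\Otg)$ via the weak equivalence~\eqref{eq:24}, build the comparison map (the paper's~$\eta$ in~\eqref{eq:a22}), and prove bijectivity by locality (Lemma~\ref{thm:a4}), polynomiality (Theorem~\ref{thm:mm1}), and $GL$-invariant theory.

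The one substantive divergence is your step~(c). You propose a two-stage reduction: first invoke the \emph{prolongation} part of the diffeomorphism jet group to ``absorb'' the symmetric part of~$\partial\alpha$ and all higher jets, then use $GL_m$-equivariance on what remains. The paper never touches the prolongation group. Instead, once~$\tw_W$ is known to be polynomial of degree~$\le q$ (Lemma~\ref{thm:a5}), it is rewritten as a \emph{linear} $GL(W)$-map~$T_W$ from~$\Sym^{\le q}\bigl(\bigoplus_j \Sym^jW^*\otimes W^*\otimes V\bigr)$ to~$\bigwedge^qW^*$, and Weyl's theorem (Lemma~\ref{thm:35}) alone forces every summand with~$j\ge 2$ to die and the~$j=1$ summand to factor through~$\bigwedge^2W^*$ (Lemma~\ref{thm:a6}). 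The point is that any $GL(W)$-equivariant linear map~$\otimes^NW^*\to\bigwedge^qW^*$ is a multiple of antisymmetrization, which kills anything symmetric in two of the~$W^*$-slots. So linear invariance already does the whole job; no higher jet-group action is needed. Your route via prolongation can be made to work, but it is more delicate than your sketch suggests (the ``absorption'' is an orbit argument on jet space that requires care when~$\alpha$ vanishes at the point), and it is simply bypassed in the paper. Your injectivity argument is morally the paper's Lemma~\ref{thm:51}, which makes the ``generic high-dimensional'' test explicit by taking~$\dim W=p+2q$ and a fixed basis in~\eqref{eq:62}.
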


\noindent
 It follows from~\eqref{eq:a10} that the de Rham cohomology of~$\ENG$ is that
of a contractible manifold.  $\Kos\mathfrak{g}^*$~is called the \emph{Weil
algebra} of the Lie algebra~$\mathfrak{g}$.  Henri Cartan~\cite{C1,C2} used the
Weil algebra as a model for the cohomology of any realization of~$EG$ as a
\emph{space}.  The novelty here is that the Weil algebra is \emph{precisely}
the de Rham complex of the \emph{generalized manifold}~$\ENG$.
 
Next we state the solution to Problem~\ref{thm:2}.

  \begin{theorem}[]\label{thm:a2}
 The de Rham complex of~$\BNG$ is $\bigl((\Sym^{\bullet
}\mathfrak{g}^*)^G,d=0 \bigr)$.
  \end{theorem}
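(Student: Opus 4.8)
The plan is to push the statement through the chain of weak equivalences already available and thereby reduce it to the classical computation of the basic subcomplex of the Weil algebra. Concretely, I would first invoke Proposition~\ref{thm:29}: the map $\psi\colon\BNGt\to\BNG$ is a weak equivalence, so by Remark~\ref{thm:77} (the de Rham complex is a homotopy invariant) it suffices to compute the de Rham complex of $\BNGt$. By Example~\ref{thm:28} this simplicial sheaf is the action groupoid of $\sF_G$ acting on the sheaf $\Otg$ through the affine action $\alpha\mapsto\alpha\cdot g=g^*\theta+\Ad_{g\inv}\alpha$ of~\eqref{eq:25}. Proposition~\ref{thm:54} together with Proposition~\ref{thm:76} then identify the de Rham complex of $\BNGt$ with the basic subcomplex $\Omega^{\bullet}(\Otg)\bas$ in the sense of Definition~\ref{thm:53}. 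Finally, under the weak equivalences~\eqref{eq:24} one has $\ENG\simeq\Otg$, so Theorem~\ref{thm:32} supplies an isomorphism of differential graded algebras $\Omega^{\bullet}(\Otg)\cong(\Kos\mathfrak{g}^*,d_K)$. The theorem is thus reduced to computing the basic subcomplex of the Weil algebra $\Kos\mathfrak{g}^*$ for the action induced on it by $\alpha\mapsto\alpha\cdot g$.

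Next I would make the operators $\iota_\xi$ and $\sL_\xi$ on $\Kos\mathfrak{g}^*$ explicit. Let $\boldsymbol\theta\in\Omega^1(\Otg)\otimes\mathfrak{g}$ be the tautological $\mathfrak{g}$-valued $1$-form, whose components $\langle\lambda,\boldsymbol\theta\rangle$ ($\lambda\in\mathfrak{g}^*$) together with their de Rham differentials $d\langle\lambda,\boldsymbol\theta\rangle$ freely generate $\Kos\mathfrak{g}^*=\bigwedge^{\bullet}\mathfrak{g}^*\otimes\Sym^{\bullet}\mathfrak{g}^*$ by Theorem~\ref{thm:32}. A direct computation from the definitions~\eqref{eq:73} and~\eqref{eq:76}, using~\eqref{eq:25}, gives $\iota_\xi\langle\lambda,\boldsymbol\theta\rangle=\langle\lambda,\xi\rangle$ (a constant, i.e. the duality pairing on $\bigwedge^1\mathfrak{g}^*$) and $\sL_\xi\langle\lambda,\boldsymbol\theta\rangle=-\langle\operatorname{ad}_\xi^*\lambda,\boldsymbol\theta\rangle$; the values on the degree-two generators follow from Cartan's formula $\iota_\xi d=\sL_\xi-d\iota_\xi$. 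Passing to the curvature variables $\boldsymbol\Omega=d\boldsymbol\theta+\tfrac12[\boldsymbol\theta,\boldsymbol\theta]$ realizes the standard isomorphism of $(\Kos\mathfrak{g}^*,d_K)$ with H. Cartan's Weil algebra, in which $\iota_\xi\langle\lambda,\boldsymbol\Omega\rangle=0$, $\sL_\xi\langle\lambda,\boldsymbol\Omega\rangle=-\langle\operatorname{ad}_\xi^*\lambda,\boldsymbol\Omega\rangle$, and $d\langle\lambda,\boldsymbol\Omega\rangle=-\langle\lambda,[\boldsymbol\theta,\boldsymbol\Omega]\rangle$ is the Bianchi identity.

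With the operators in hand the basic subcomplex is computed in the classical way. Condition~(ii) of Definition~\ref{thm:53}, $\iota_\xi\omega=0$ for all $\xi\in\mathfrak{g}$, forces $\omega$ to contain no factor of $\boldsymbol\theta$, hence to lie in the subalgebra $\Sym^{\bullet}\mathfrak{g}^*$, which is concentrated in even total degree. Condition~(i), $G$-invariance---equivalently $\sL_\xi\omega=0$ for all $\xi$ together with invariance under the component group $\pi_0G$---then cuts this down to $(\Sym^{\bullet}\mathfrak{g}^*)^G$, the $\Ad$-invariant polynomials on $\mathfrak{g}$. The differential vanishes there: for such a polynomial $P$ in the curvature generators, $dP$ is, up to a sign and a factor of $\boldsymbol\theta$, the infinitesimal coadjoint action of $\mathfrak{g}$ on $P$, which is zero; conversely every invariant polynomial is visibly basic. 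This produces the de Rham complex $\bigl((\Sym^{\bullet}\mathfrak{g}^*)^G,d=0\bigr)$, as claimed.

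The only points demanding care, rather than new ideas, are the bookkeeping of signs and normalizations in the change of variables $d\boldsymbol\theta\leftrightarrow\boldsymbol\Omega$ and the correct handling of $\pi_0G$ in condition~(i). The genuinely substantial input is Theorem~\ref{thm:32} itself: the assertion that $\Omega^{\bullet}(\Otg)$ is \emph{no larger} than $\Kos\mathfrak{g}^*$, i.e. that every natural differential form built from a $\mathfrak{g}$-valued $1$-form is a universal polynomial in $\boldsymbol\theta$ and $d\boldsymbol\theta$. Establishing that---carried out in~\S\ref{sec:7}---is the real obstacle: it requires a locality/finite-order reduction (Lemma~\ref{thm:a4}), the appendix's theorem upgrading set-theoretic naturality of polynomial functors to honest polynomiality, and classical $GL_m$-invariant theory. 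Once Theorem~\ref{thm:32} is granted, Theorem~\ref{thm:a2} is the short homological argument above.
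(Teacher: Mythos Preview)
Your proposal is correct and follows essentially the same route as the paper's proof: reduce to $\BNGt$ via Proposition~\ref{thm:29}, apply Proposition~\ref{thm:54} and Proposition~\ref{thm:76} to identify the de~Rham complex with the basic subcomplex of $\Omega^\bullet(\Otg)$, invoke Theorem~\ref{thm:32} to get the Weil algebra, and then carry out the classical change of variables to curvature generators to see that the basic subalgebra is $(\Sym^\bullet\mathfrak{g}^*)^G$ with zero differential. Your computation of $\iota_\xi$ on the generators is precisely the content of Lemma~\ref{thm:55}, and the paper cites~\cite[\S5]{MQ} for the final step just as you do implicitly.
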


\noindent
 Here $\Sym^{\bullet }(\mathfrak{g}^*)^G$~is the space of Ad-invariant
polynomials on~$\mathfrak{g}$, graded by twice the degree, and the de Rham
differential vanishes on~$\BNG$.  The classical Chern-Weil
homomorphism~\cite{CS} is an injection
  \begin{equation}\label{eq:a8}
     \bigl((\Sym^{\bullet }\mathfrak{g}^*)^G,d=0 \bigr)\longrightarrow
     \bigl(\Omega ^{\bullet }(\BNG),d \bigr). 
  \end{equation}
Namely, given an invariant polynomial of degree~$k$ on~$\mathfrak{g}$ we
apply it to the curvature of a $G$-connection to obtain a closed $2k$-form,
and this construction is local and natural.  Theorem~\ref{thm:a2} asserts
that Chern-Weil forms are the only natural differential forms attached
to a $G$-connection.

The proofs of Theorem~\ref{thm:32} and Theorem~\ref{thm:a2} are given
in~\S\ref{sec:7}.   
 
Let $X$~be a smooth manifold with a left action of the Lie group~$G$.  The
\emph{Borel quotient}, or homotopy quotient, of~$X$ is usually defined as
$X_G=EG\times _GX$, where $EG$~is a contractible space on which $G$~acts freely,
and $X_G$~is the quotient of~$EG\times X$ by the diagonal $G$-action.  We
mimic this construction in the world of simplicial sheaves. 

  \begin{definition}[]\label{thm:33}
 The \emph{simplicial Borel quotient} of~$X$ by~$G$ is the simplicial
sheaf representing the action of~$G$ on $\ENG\times X$.
  \end{definition}

\noindent
 Using the equivalence~\eqref{eq:24} of~$\ENG$ with~$\Otg$, the simplicial
Borel quotient is equivalent to the simplicial sheaf~$\Bqt$ indicated in the
diagram (cf.~\eqref{eq:26})
  \begin{equation}\label{eq:30}
  \begin{split}
     \xymatrix@1{(\Otg)\times X\;\ar@{-->}[r]&\;G\times (\Otg)\times
     X\;\ar@<1ex>[l] 
     \ar@<-1ex>[l] \ar@{-->}@<1ex>[r] \ar@{-->}@<-1ex>[r] & \;G\times G\times
     (\Otg)\times X\;\cdots\ar@<2ex>[l] \ar[l] \ar@<-2ex>[l] } 
  \end{split}
  \end{equation}
The first two solid arrows in~\eqref{eq:30} are 
  \begin{equation}\label{eq:32}
  \begin{split}
     \xymatrix@1{\displaystyle(\alpha,x) \atop \displaystyle{(g^*\theta
     +\Ad_{g\inv }\alpha ,g\inv \cdot x)}\;\;& \;\;(g,\alpha,x)
     \ar@<1ex>@{|->}[l] \ar@<-1ex>@{|->}[l]} 
  \end{split}
  \end{equation}
Note that $\Bqt$~is a sheaf of groupoids.

To state the next theorem we need a preliminary lemma about the Weil algebra.
The coadjoint action~$\Ad^*$ of~$G$ on~$\mathfrak{g}^*$ induces an action
of~$G$ on the Weil algebra~$\Kos\mathfrak{g}^*$.   

  \begin{lemma}[]\label{thm:55}
 For each $\xi \in \mathfrak{g}$ the action~\eqref{eq:26} of~$G$ on $\Otg$
induces a contraction~$\iota _\xi $ on $\Omega ^{\bullet }(\Otg)\cong
\Koss^{\bullet }\mathfrak{g}^*$ which satisfies
  \begin{equation}\label{eq:44}
     \begin{aligned} \iota _\xi \lambda &= \langle \xi ,\lambda
      \rangle,\qquad &\lambda &\in \mathfrak{g}^*={\textstyle\bigwedge}
      ^1\mathfrak{g}^* \\ \iota _\xi \tla&=-\Ad_\xi ^*\lambda ,\qquad
     &\tla&\in \mathfrak{g}^*=\Sym^1\mathfrak{g}^*\end{aligned} 
  \end{equation}
on generators. 
  \end{lemma}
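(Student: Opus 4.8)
The plan is to reduce the assertion to two short computations, using the explicit form of the identification $\Omega^{\bullet}(\Otg)\cong\Kos\mathfrak{g}^{*}$ supplied by Theorem~\ref{thm:32} (via the weak equivalence $\ENG\simeq\Otg$ of~\eqref{eq:24}), together with the definitions~\eqref{eq:73} and~\eqref{eq:76} of $\iota_{\xi}$ and $\sL_{\xi}$. Spelled out, that identification carries the degree-one generator $\lambda\in{\textstyle\bigwedge}^{1}\mathfrak{g}^{*}$ to the $1$-form $\theta_{\lambda}$ on $\Otg$ with $\theta_{\lambda}(M,\alpha)=\langle\lambda,\alpha\rangle\in\Omega^{1}(M)$ for $\alpha\in\Otg(M)=\Omega^{1}(M;\mathfrak{g})$, and the degree-two generator $\tla\in\Sym^{1}\mathfrak{g}^{*}$ to $d\theta_{\lambda}$, so $\tla(M,\alpha)=\langle\lambda,d\alpha\rangle$ (so that $d\theta_{\lambda}$ corresponds to $\tla=d_{K}\lambda$). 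Since $\iota_{\xi}$ is built in~\eqref{eq:73} from the contraction $\iota_{\bx}$ along the vector field $\bx$ on $G\times M$ (an odd derivation) followed by the multiplicative restriction to $\{e\}\times M\hookrightarrow G\times M$, it is itself an odd derivation of $\Omega^{\bullet}(\Otg)$, hence of $\Kos\mathfrak{g}^{*}$; as that algebra is generated by ${\textstyle\bigwedge}^{1}\mathfrak{g}^{*}$ together with $\Sym^{1}\mathfrak{g}^{*}$, it suffices to verify~\eqref{eq:44} on these generators.

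First I would compute $\iota_{\xi}\theta_{\lambda}$ by unwinding~\eqref{eq:73}. Pulling the action~\eqref{eq:26} back along $G\times M\xrightarrow{\id\times\alpha}G\times\Otg\xrightarrow{\;a\;}\Otg$ and applying the gauge formula~\eqref{eq:25} produces the $1$-form $\eta=\bigl\langle\lambda,\ \mathrm{pr}_{G}^{*}\theta+\Ad_{\mathrm{pr}_{G}^{-1}}\mathrm{pr}_{M}^{*}\alpha\bigr\rangle$ on $G\times M$, where $\mathrm{pr}_{G},\mathrm{pr}_{M}$ are the two projections and $\theta$ the Maurer--Cartan form. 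The second summand has no leg along the $G$-factor, so it is annihilated by $\iota_{\bx}$; for the first summand, $\iota_{\bx}\mathrm{pr}_{G}^{*}\theta$ restricted to $\{e\}\times M$ is $\theta_{e}$ evaluated on $(\mathrm{pr}_{G})_{*}\bx|_{\{e\}\times M}=\xi$ --- using that $\bx$ is, by its definition in~\eqref{eq:73}, ``$\xi$ along the $G$-factor'' --- hence the constant $\mathfrak{g}$-valued function $\xi$. Therefore $\iota_{\xi}\theta_{\lambda}(M,\alpha)=\langle\lambda,\xi\rangle=\langle\xi,\lambda\rangle$, the first line of~\eqref{eq:44}.

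Next, for $\iota_{\xi}\tla$, I would invoke Cartan's formula (stated just before Definition~\ref{thm:53}): $\iota_{\xi}\tla=\iota_{\xi}\,d\theta_{\lambda}=\sL_{\xi}\theta_{\lambda}-d\,\iota_{\xi}\theta_{\lambda}=\sL_{\xi}\theta_{\lambda}$, the last term vanishing since $\iota_{\xi}\theta_{\lambda}$ is constant. The Lie derivative is read off from the infinitesimal gauge action: for the constant path $g=\exp(t\xi)\colon M\to G$ of gauge transformations the term $g^{*}\theta$ in~\eqref{eq:25} vanishes, and differentiating $\Ad_{\exp(-t\xi)}\alpha$ at $t=0$ gives $-[\xi,\alpha]$; hence $\sL_{\xi}\theta_{\lambda}(M,\alpha)=-\langle\lambda,[\xi,\alpha]\rangle=\langle-\Ad_{\xi}^{*}\lambda,\alpha\rangle$. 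Under the identification this says $\iota_{\xi}\tla=-\Ad_{\xi}^{*}\lambda\in{\textstyle\bigwedge}^{1}\mathfrak{g}^{*}$, the second line of~\eqref{eq:44}; the derivation property then extends both formulas to all of $\Kos\mathfrak{g}^{*}$.

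I expect the only real difficulty to be the bookkeeping around~\eqref{eq:73}: pinning down the vector field $\bx$ on $G\times M$ and tracking it through $\mathrm{pr}_{G}$, the action map $a$, and the restriction to $\{e\}\times M$, while keeping the conventions for the Maurer--Cartan form and for the coadjoint action $\Ad^{*}$ consistent with those of \S\S\ref{sec:5}--\ref{sec:6} --- these conventions are exactly what fix the signs displayed in~\eqref{eq:44}. Once the set-up is in place, the two computations themselves are immediate.
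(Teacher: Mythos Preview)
Your proof is correct and follows essentially the same line as the paper's. The computation of the first identity in~\eqref{eq:44} is identical to the paper's: both pull the action back to $G\times M$, obtain $\langle\lambda,\pr_G^*\theta+\Ad_{\pr_G^{-1}}\pr_M^*\alpha\rangle$, and contract with~$\bar\xi$ at~$\{e\}\times M$. For the second identity the paper instead writes out directly the $2$-form $a^*\tilde\lambda$ on $G\times M$ (equation~\eqref{eq:78}) and contracts, while you invoke Cartan's formula to convert $\iota_\xi d\theta_\lambda$ into $\sL_\xi\theta_\lambda$ and then differentiate the gauge action; since $a^*\tilde\lambda=d(a^*\theta_\lambda)$, this is the same calculation reorganized, and your route avoids expanding $d\theta$ and the cross term explicitly.
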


\noindent
 We defer the proof to~\S\ref{sec:7}. 

  \begin{remark}[]\label{thm:56}
 The contraction~\eqref{eq:44} is usually a definition (\cite{C1,C2},
\cite[\S5]{MQ}, \cite[\S3.2]{GS}).  In our approach with sheaves it is a
computation from the general definition~\eqref{eq:73}. 
  \end{remark}

  \begin{theorem}[]\label{thm:34}\ 
   \begin{enumerate}
 \item For any smooth manifold~$X$ the de Rham complex of $X\times (\Otg)$ is
$\Omega (X;\Koss\mathfrak{g}^*)^{\bullet }$ with differential the sum of the
de Rham differential~$d_X$ on ~$X$ and the Koszul differential~$d_K$
in~\eqref{eq:a9}.

 \item The de Rham complex of the simplicial Borel quotient~$\Bqt$
in \eqref{eq:30} is the basic subcomplex of $\Omega
(X;\Koss\mathfrak{g}^*)^{\bullet }$ with differential $d_X+d_K$.
  \end{enumerate}
  \end{theorem}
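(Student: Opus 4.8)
The plan is to reduce both parts to computations already packaged in the earlier machinery, so that the only genuine work is a local differential-geometry calculation on balls. For part (i), I would observe that $X\times(\Otg)$ is a discrete simplicial sheaf (a product of a representable sheaf with the discrete sheaf $\Otg\simeq\ENG$), so by Proposition~\ref{thm:m1} its de Rham complex in degree $n$ is simply $\Omega^n\bigl(X\times(\Otg)\bigr)=\spre\bigl(\sF_X\times\Otg,\Omega^n\bigr)$. Using the function-space description of sheaves (Example~\ref{thm:49}) together with the identification $\Omega^n(\Otg)\cong\Kos^n\mathfrak{g}^*$ underlying Theorem~\ref{thm:32}, a form on $X\times(\Otg)$ is the same datum as a form on $X$ valued in the Koszul complex; the total differential is the sum $d_X+d_K$ by bilinearity of $d$ on a product. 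Concretely I would factor: a map $\sF_X\times\Otg\to\Omega^n$ assigns to each test manifold $M$, each $h\colon M\to X$, and each $\alpha\in\Omega^1(M;\mathfrak{g})$ an $n$-form on $M$; naturality in the $\Otg$-variable forces polynomial dependence on $\alpha,d\alpha$ exactly as in the proof of Theorem~\ref{thm:32}, yielding $\Omega(X;\Kos\mathfrak{g}^*)^n$. The appeal to the appendix (set-theoretic transformations of polynomial functors are polynomial) is what licenses the ``forces polynomial dependence'' step.

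For part (ii), the key point is that $\Bqt$ in \eqref{eq:30} is precisely the action groupoid of $G$ (as a simplicial presheaf, i.e.\ $\sF_G$) acting on the sheaf $(\Otg)\times X$, with the action given by \eqref{eq:32}. This is the situation of Proposition~\ref{thm:54}: the de Rham complex of the associated simplicial sheaf is the equalizer of $\rho_0,\rho_1\colon\Omega^{\bullet}\bigl((\Otg)\times X\bigr)\rightrightarrows\Omega^{\bullet}\bigl(G\times(\Otg)\times X\bigr)$, and by Proposition~\ref{thm:76} this equalizer is the basic subcomplex $\Omega^{\bullet}\bigl((\Otg)\times X\bigr)\bas$. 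Combining with part (i), the underlying complex is $\Omega(X;\Kos\mathfrak{g}^*)^{\bullet}$ with differential $d_X+d_K$, and I must identify its basic subcomplex with the one named in the statement. Here I would invoke Lemma~\ref{thm:55}: the contraction operators $\iota_\xi$ on $\Omega^{\bullet}(\Otg)\cong\Kos^{\bullet}\mathfrak{g}^*$ coming from the $G$-action \eqref{eq:26} are exactly the standard Weil-model contractions, so the $\iota_\xi$ on the product is $\iota_\xi\otimes\id_X$ plus the geometric contraction on the $X$-factor from the $G$-action on $X$; similarly the $G$-invariance condition (i) of Definition~\ref{thm:53} is the diagonal one. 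Thus the basic subcomplex is precisely the Weil model of the $G$-manifold $X$.

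The main obstacle, I expect, is the bookkeeping in part (ii): verifying that the contraction and invariance conditions coming from the abstract action \eqref{eq:32} on the sheaf $(\Otg)\times X$ really do restrict, factor by factor, to the expected operations—namely the algebraic $\iota_\xi$ on $\Kos\mathfrak{g}^*$ of Lemma~\ref{thm:55} on one factor and the honest Lie-derivative/contraction of the $G$-action on the manifold $X$ on the other. This requires unwinding the general definitions \eqref{eq:73} and \eqref{eq:76} of $\iota_\xi$ and $\sL_\xi$ for a sheaf through the product and checking compatibility with Cartan's formula; it is conceptually routine but notation-heavy. A secondary point worth stating carefully is that passing to the basic subcomplex commutes with the identification of part~(i)—that is, ``basic in $\Omega^{\bullet}\bigl((\Otg)\times X\bigr)$'' matches ``basic in $\Omega(X;\Kos\mathfrak{g}^*)^{\bullet}$ for the $d_X+d_K$ differential''—which follows because $d_K$ is built from the same $G$-data and $\iota_\xi$ is a graded derivation. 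Once Lemma~\ref{thm:55} is in hand (its proof being deferred to \S\ref{sec:7}), both parts fall out; so the honest content of Theorem~\ref{thm:34} is the reduction of part~(i) to Theorem~\ref{thm:32}'s local computation and the reduction of part~(ii) to Propositions~\ref{thm:54} and~\ref{thm:76}.
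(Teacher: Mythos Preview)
Your proposal is correct and follows essentially the same route as the paper. For part~(i) the paper does exactly what you describe: it reruns the proof of Theorem~\ref{thm:32} (via its generalization Theorem~\ref{thm:a3}) with the extra manifold factor, recorded as Corollary~\ref{thm:a7}; the paper even remarks, as you do, that a product formula for de~Rham complexes of sheaves would make this immediate but opts for the direct argument. For part~(ii) the paper is terser than you are---it simply says the result is an immediate corollary of part~(i) and Proposition~\ref{thm:76}---whereas you go on to unpack why ``basic'' on the sheaf side matches ``basic'' in the Weil-model sense via Lemma~\ref{thm:55}; that extra care is appropriate if one reads the statement as asserting agreement with the standard Weil model (as the paper's surrounding discussion does), and it is the content the paper leaves implicit.
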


\noindent
 Part~(i) would follow immediately from Theorem~\ref{thm:32} if we prove that
the de Rham complex of a Cartesian product of sheaves is the (completed)
tensor product of the de Rham complexes of the factors.  We give a direct
proof in~\S\ref{sec:7}.  Part~(ii) is an immediate corollary of part~(i) and
Proposition~\ref{thm:76}.  The complex in Theorem~\ref{thm:34}(ii) is the
\emph{Weil model} for equivariant cohomology; see~\cite{C1,C2},
\cite[\S4]{AB}, \cite[\S5]{MQ}, \cite[\S4]{GS}.  We realize it precisely as
the de Rham complex of a generalized manifold, the simplicial sheaf~$\Bqt$.

   \section{Proofs}\label{sec:7}
% lastsubsec@  1

We prove the following slight generalization of Theorem~\ref{thm:32}.
(Recall the weak equivalence~\eqref{eq:24}.)

  \begin{theorem}[]\label{thm:a3}
 Let $V$~be a finite dimensional real vector space.  Then the de Rham complex
of $\OtV$ is $(\Kos V^*,d_K)$.  
  \end{theorem}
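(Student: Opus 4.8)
The plan is to turn the statement into a concrete problem in the invariant theory of differential operators and then solve it. Since $\OtV$ is a sheaf, regarded as a constant simplicial presheaf, Proposition~\ref{thm:m1} identifies each term of the de Rham complex of Definition~\ref{thm:75} with an ordinary mapping set of presheaves: $\ho\spre(\OtV,\Omega^n)\cong\presheaves(\OtV,\Omega^n)=\Omega^n(\OtV)$. Unwinding, an element $\tau\in\Omega^n(\OtV)$ is precisely a natural rule assigning to each manifold $M$ and each $\alpha\in\Omega^1(M;V)$ a form $\tau(\alpha)\in\Omega^n(M)$ with $\tau(f^*\alpha)=f^*\tau(\alpha)$ for every smooth $f$, and the de Rham differential carries $\tau$ to $\alpha\mapsto d\tau(\alpha)$. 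So Theorem~\ref{thm:a3} is the assertion that the differential graded algebra of all such natural operations is $(\Kos V^*,d_K)$.

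Next I would write down a map $\Phi\colon\Kos V^*\to\Omega^\bullet(\OtV)$. For $\lambda\in V^*=\bigwedge^1V^*$ let $\beta_\lambda\in\Omega^1(\OtV)$ be the operation $\alpha\mapsto\langle\lambda,\alpha\rangle$, and for the corresponding $\tilde\lambda\in V^*=\Sym^1V^*$ (degree $2$) set $\Phi(\tilde\lambda)=d\beta_\lambda$, the $2$-form operation $\alpha\mapsto\langle\lambda,d\alpha\rangle$. Since $\Kos V^*$ is freely generated as a differential graded algebra by $\bigwedge^1V^*$ with $d_K\lambda=\tilde\lambda$ and $d_K\tilde\lambda=0$, and since $d\beta_\lambda=\Phi(\tilde\lambda)$ and $dd\beta_\lambda=0$, the assignment $\lambda\mapsto\beta_\lambda$ extends uniquely to a map $\Phi$ of differential graded algebras; explicitly it sends $\lambda_1\wedge\cdots\wedge\lambda_p\otimes\tilde\mu_1\cdots\tilde\mu_q$ to $\langle\lambda_1,\alpha\rangle\wedge\cdots\wedge\langle\lambda_p,\alpha\rangle\wedge\langle\mu_1,d\alpha\rangle\wedge\cdots\wedge\langle\mu_q,d\alpha\rangle$, a form of degree $p+2q$. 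Injectivity of $\Phi$ is elementary: rescaling the argument $\alpha\rightsquigarrow t\alpha$ separates the bidegree-$(p,q)$ summands, and for fixed $(p,q)$ one evaluates on suitable $V$-valued $1$-forms on a ball of large dimension, using that the $\langle\lambda_i,\alpha\rangle$ are antisymmetrized $1$-forms and the $\langle\mu_j,d\alpha\rangle$ are algebraically independent commuting closed $2$-forms.

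The substance is surjectivity of $\Phi$: every natural $\tau$ is a polynomial in the $\beta_\lambda$ and $d\beta_\lambda$. Because both $\Omega^n$ and $\OtV$ are sheaves, $\tau$ is determined by its effect on germs at the origin of $\AA^m$ for every $m$, so I would argue pointwise. The first, and I expect the hardest, step is \emph{locality}: that $\tau(\alpha)(0)$ depends only on a finite jet $j^N_0\alpha$. This is Lemma~\ref{thm:a4}, flagged in the introduction, and it is the genuinely delicate point precisely because we are not allowed to assume that $\tau$ is a differential operator. Granting it, naturality under the group of $N$-jets at $0$ of diffeomorphisms fixing $0$, together with naturality under all linear maps between the $\AA^m$, exhibits $\tau$ as an equivariant transformation from the jet space of $V$-valued $1$-forms to $\bigwedge^n\RR^{m*}$, coherently in $m$. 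The appendix's theorem (set-theoretic transformations of polynomial functors are polynomial) then forces this transformation to be polynomial, and testing against the scaling diffeomorphisms $x\mapsto tx$ — under which the order-$j$ part of the jet scales by $t^{j+1}$ while an $n$-form at $0$ scales by $t^n$ — forces it to be homogeneous of weight $n$, hence to involve only boundedly many jet levels.

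Finally, classical invariant theory finishes the job. The jet coefficients are purely covariant tensors on $\RR^m$ valued in the fixed space $V$; there are no contravariant $\RR^m$-indices to contract and no $G$-invariant pairing available on $V$ beyond evaluation against fixed elements of $V^*$, so by the first fundamental theorem for $GL_m$ every equivariant polynomial into $\bigwedge^n\RR^{m*}$ is a sum of wedges of $V^*$-contractions of jet coefficients. The translation part of the jet group is then used, in the usual ``reduction to curvature'' fashion, to kill all dependence on jet levels above the first and on the symmetric part of $\partial\alpha$, leaving exactly wedges of $\langle\lambda,\alpha(0)\rangle$ and $\langle\mu,d\alpha(0)\rangle$ (here $dd\alpha=0$ accounts for the absence of further generators). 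This places $\tau$ in the image of $\Phi$. Combining with the second paragraph, $\Phi$ is an isomorphism of differential graded algebras, so the de Rham complex of $\OtV$ is $(\Kos V^*,d_K)$, and Theorems~\ref{thm:32} and~\ref{thm:8} follow as the special cases $V=\mathfrak{g}$ and $V=\RR$.
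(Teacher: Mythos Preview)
Your outline matches the paper's proof closely through the definition of~$\Phi$ (the paper's~$\eta$), the locality lemma (Lemma~\ref{thm:a4}), the appeal to the appendix for polynomiality, and the reduction to $GL$-invariant theory.  The gap is in your last paragraph.  You propose to use ``the translation part of the jet group'' in a ``reduction to curvature'' fashion to kill dependence on jet levels above the first and on the symmetric part of~$\partial\alpha$.  But the jet group of diffeomorphisms fixing the origin has no translation part, and the analogy with connections fails: for a connection the gauge group acts by~$\alpha\mapsto\alpha+g^{-1}dg$, giving additive freedom to normalize symmetrized derivatives, whereas for a bare $V$-valued $1$-form under a diffeomorphism~$\phi=\id+\psi$ with~$\psi=O(|x|^2)$ one computes $\partial_k(\phi^*\alpha)_i(0)=\partial_k\alpha_i(0)+\alpha_j(0)\,\partial_k\partial_i\psi^j(0)$, which only shifts the symmetric part of~$\partial\alpha(0)$ by terms proportional to~$\alpha(0)$ and so cannot normalize it away in general.

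In fact no such step is needed: the first fundamental theorem you already invoke finishes the argument by itself, and this is exactly what the paper does.  By Weyl (Lemma~\ref{thm:35}), any $GL(W)$-equivariant linear map $\otimes^N W^*\to\bigwedge^q W^*$ has~$N=q$ and is a multiple of antisymmetrization.  Hence a term coming from jet level~$j$, which carries a~$\Sym^j W^*$ factor, is annihilated by antisymmetrization for~$j\ge 2$, and for~$j=1$ only the quotient~$\bigwedge^2 W^*$ of~$W^*\otimes W^*$ survives (Lemma~\ref{thm:a6}).  This leaves precisely wedges of $\langle\lambda,\alpha(0)\rangle$ and~$\langle\mu,d\alpha(0)\rangle$, placing~$\tau$ in the image of~$\Phi$.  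So replace your final ``translation'' step by this observation and the proof is complete, in agreement with the paper's argument.
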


\noindent 
 More precisely, we define a homomorphism of differential graded algebras 
  \begin{equation}\label{eq:a22}
     \eta \:\Kos V^* \longrightarrow  \Omega ^{\bullet }(\OtV) 
  \end{equation}
as follows.  To $\ell \in V^*={\textstyle\bigwedge} ^1V^*$ we assign the
1-form $\eta (\ell )\: \OtV\to\Omega ^1$ characterized by
  \begin{align}
     \eta (\ell )\:\sum\limits_{i}\alpha_i \otimes
      v_i&\longmapsto \sum\limits_{i}\langle v_i,\ell \rangle\,\alpha
     _i\qquad \alpha _i\in \Omega ^1,\quad v_i\in V.\label{eq:33}\\ 
      \intertext{This determines the entire homomorphism~$\eta $.  For
example, it follows that } 
      \eta (\ell _1\wedge \ell _2)=\eta
      (\ell _1)\wedge \eta (\ell _2)\:\sum\limits_{i}\alpha_i \otimes
      v_i&\longmapsto \sum\limits_{i,j}\langle v_i,\ell _1\rangle\langle
      v_j,\ell _2\rangle\,\alpha _i\wedge \alpha _j\label{eq:34}\\ \eta
     (\tl)=d\eta (\ell 
      ) \:\sum\limits_{i}\alpha_i \otimes v_i&\longmapsto
      \sum\limits_{i}\langle v_i,\ell \rangle \,d\alpha _i\label{eq:35}
  \end{align}
Recall from~\eqref{eq:a9} the notation $\tl\in V^*=\Sym^1V^*$.  Notice that
the map $\Omega ^1\otimes V\to\Omega ^{\bullet }$ attached to an element
of~$\Kos V^*$ is polynomial but not usually linear.  Also, these formulas are
most easily understood by pulling back to a test manifold~$M$ via a map
$M\to\OtV$, in which case $\alpha _i\in \Omega ^1(M)$.  Theorem~\ref{thm:a3}
asserts that $\eta $~is an isomorphism.

We present the proof of Theorem~\ref{thm:a3} as a series of lemmas.  Recall
that if $E\to M$ is a vector bundle, then for each integer~$q\ge 0$ there is
an associated bundle of $q$-jets $J^qE\to M$ defined as follows.  Let
$I_p\subset C^{\infty}(M)$ denote the ideal of functions which vanish at~$p$.
Then the fiber of~$J^qE$ at~$p$ is $\Omega ^0(M;E)/I_p^{q+1}\Omega ^0(M;E)$,
the quotient of the space of sections of~$E$ by the space of sections which
vanish at~$p$ to order~$\ge q+1$.  An element $\omega \in \Omega ^q(\OtV)$
is, for each test manifold~$M$, a map of sets $\omega _M\:\Omega ^1(M;V)\to
\Omega ^q(M)$, functorial in~$M$.

  \begin{lemma}[]\label{thm:a4}
 Fix $\omega \in \Omega ^q(\OtV)$.  For $\alpha \in \Theta ^1(M;V)$ the value
of~$\omega _M(\alpha )$ at a point~$p\in M$ depends only on the $q$-jet
of~$\alpha$ at~$p$.
  \end{lemma}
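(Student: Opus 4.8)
The plan is to exploit naturality of $\omega$ under the pullback maps $\sF_{M'}\to\OtV$ induced by smooth maps $M'\to M$ in order to ``localize'' the construction, and then to improve locality into finite jet-dependence by a scaling/degeneration argument. First I would reduce to the case $M = B^m(r)$, an affine ball, since the value of $\omega_M(\alpha)$ at a point $p$ only sees a neighborhood of $p$: restricting $\alpha$ along the inclusion of a small ball around $p$ and using functoriality shows $\omega_M(\alpha)_p$ is determined by the germ of $\alpha$ at $p$, i.e.\ by the image of $\alpha$ in the $m$-dimensional stalk of $\OtV$. This is the easy half and uses nothing beyond Definition~\ref{thm:41} and the naturality encoded in $\omega\in\presheaves(\OtV,\Omega^q)$.

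Next I would upgrade ``depends only on the germ'' to ``depends only on the $q$-jet.'' The key device is the family of dilations $\delta_t\:B^m\to B^m$, $x\mapsto tx$, for $t\in[0,1]$, assembled into a single smooth map $\Phi\:[0,1]\times B^m(r)\to B^m(r)$ (shrinking $r$ as needed). Given two forms $\alpha,\alpha'\in\Omega^1(B^m;V)$ with the same $q$-jet at the origin $p$, I want to interpolate between $\Phi^*\alpha$ and $\Phi^*\alpha'$ on the product manifold and apply $\omega$ there, using functoriality in the $[0,1]$-variable to conclude. More precisely: pulling $\alpha$ back along $\delta_t$ and dividing by $t$ (rescaling the $V$-coefficients, which is legitimate since $\Omega^1\otimes V$ has an $\RR$-action by scaling in $V$ — though one must be careful that $\omega$ need not respect this scaling) produces a smooth family. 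The honest statement I would aim for is that $\omega_{B^m}(\alpha)_0$ depends polynomially, hence in particular continuously, on the jet data together with the ``tail,'' and then a limiting argument $t\to 0$ kills the tail. The cleanest formulation: consider $\beta_s = \alpha' + s(\alpha-\alpha')$ for $s$ a coordinate on an extra $\RR$-factor; since $\alpha-\alpha'$ vanishes to order $\ge q$ at $p$, after composing with a suitable blow-down/dilation the form $\omega$ evaluated along this family is visibly independent of $s$ at the origin, because a $q$-form can be evaluated on only $q$ tangent vectors and the $q$-jet of the $V$-valued $1$-form is exactly the data needed to write down a $q$-form by wedging and differentiating (as the explicit formulas~\eqref{eq:33}--\eqref{eq:35} for $\eta$ suggest).

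The main obstacle I expect is precisely the point that $\omega$ is only a \emph{set-theoretic} natural transformation, not assumed polynomial or even continuous in $\alpha$; the scaling argument naively wants to take a limit $t\to 0$ and pass it through $\omega$, which is not licensed. The way around this is to avoid limits entirely and instead argue algebraically: if $\alpha$ and $\alpha'$ agree to order $q$ at $p$, construct a single smooth $V$-valued $1$-form $\gamma$ on $B^m\times\RR$ (coordinate $s$) with $\gamma|_{s=0}=\alpha$, $\gamma|_{s=1}=\alpha'$, and such that \emph{for every fixed $s$} the form $\gamma|_{s=\text{const}}$ has the same $q$-jet at $p$ — e.g.\ $\gamma = \alpha + s\,h(\alpha'-\alpha)$ where $h$ is a bump function, combined with the observation that near $p$ the difference is $O(|x|^{q+1})$ so contributes nothing to a $q$-form at $p$. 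Applying $\omega_{B^m\times\RR}(\gamma)$ and then using naturality under the two inclusions $B^m\hookrightarrow B^m\times\RR$ at $s=0,1$ forces $\omega_{B^m}(\alpha)_p=\omega_{B^m}(\alpha')_p$, provided one can show $\omega_{B^m\times\RR}(\gamma)_{(p,s)}$ is literally independent of $s$ — and that follows because $\partial/\partial s$ lies in the kernel of every form in the subalgebra generated by the $d(\text{jet data})$, which is where the $q$-jet truncation does its work. Making this last step rigorous — that a natural $q$-form's value at $p$, when fed a $1$-form whose $(q{+}1)$-st order part is supported away from $p$, cannot detect that part — is the real content, and I would prove it by the explicit observation that contracting a $q$-form with $q$ vectors and Lie-differentiating never reaches beyond the $q$-jet, made functorial via the pullback to the jet bundle $J^q(T^*B^m\otimes V)$.
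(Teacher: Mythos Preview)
Your reduction to germs via naturality under open inclusions is correct and matches the paper. The difficulty, as you correctly identify, is the passage from germ-dependence to $q$-jet-dependence when $\omega$ is merely a set-theoretic natural transformation; and here your argument has a genuine gap.

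Your single-parameter interpolation $\gamma = \alpha + s\,h(\alpha'-\alpha)$ on $B^m\times\RR$ does not suffice. You assert that $\omega_{B^m\times\RR}(\gamma)_{(p,s)}$ is independent of~$s$ because ``$\partial/\partial s$ lies in the kernel of every form in the subalgebra generated by the $d(\text{jet data})$,'' but this presupposes that $\omega$ is built from wedges and differentials of the jet data---which is exactly what the lemma is a step toward proving. With only one auxiliary dimension there is no pigeonhole: a $q$-vector at~$(p,0)$ may well have a $\partial_s$-component, and nothing you have said forces the $q$-form to ignore it. The closing appeal to ``pullback to the jet bundle'' is a restatement of the goal, not an argument.

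The paper's device is to introduce $q+1$ auxiliary dimensions rather than one. Write the difference as $\alpha_2-\alpha_1 = f_0f_1\cdots f_q\,\beta$ with each $f_i$ vanishing at~$p$, then pull back to $M\times\AA^{q+1}$ replacing each $f_i$ by a fresh coordinate~$t_i$, so that the difference becomes $t_0t_1\cdots t_q\,\tilde\beta$. Now a $q$-form at $(p,0)\in M\times\AA^{q+1}$ is determined by its values on $q$-vectors, and by multilinearity one may take these from a basis; since there are $q+1$ directions $\partial_{t_0},\dots,\partial_{t_q}$ and only $q$ slots, at least one direction $\partial_{t_i}$ is absent. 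The evaluation therefore factors through restriction to the hyperplane $N_i=\{t_i=0\}$, and on $N_i$ the two $1$-forms agree as $1$-forms (since $t_0\cdots t_q$ vanishes identically there), so by naturality the $q$-forms agree on~$N_i$. This pigeonhole is the missing idea.
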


  \begin{proof}
 Assume first~$q=0$ and $\alpha \in \Omega ^1(M;V)$.  Let
$i_p\:\{p\}\hookrightarrow M$ denote the inclusion.  Then the presheaf
property (naturality) implies $\omega _M(\alpha )\res p = i_p^*\omega
_M(\alpha )= \omega _{\{p\}}(i_p^*\alpha ) = \omega _{\pt}(0) =\omega
_M(0)\res p$.  Thus $\omega _M(\alpha )$~is a constant function on~$M$
independent of~$\alpha $ (so doesn't even depend on the 0-jet of~$\alpha $).

Now let~$q>0$.  Naturality implies that if $i_U\:U\hookrightarrow M$ is the
inclusion of a neighborhood of~$p$ and $\alpha _1,\alpha _2\in \Omega
^1(M;V)$ satisfy $i_U^*\alpha _1=i_U^*\alpha _2$, then $i_U^*\omega _M(\alpha
_1)=\omega _U(i_U^*\alpha _1) = \omega _U(i_U^*\alpha _2) = i_U^*\omega
_M(\alpha _2)$.  In other words, $i_U^*\omega _M(\alpha )$ depends only
on~$i_U^*\alpha $.  Next, we claim that~$\omega _M(0)=0$.  For we have just
proved that if~$p\in M$, the value of~$\omega _M(0)\res p$ can be computed by
restricting to a neighborhood of~$p$, which we may as well assume is a
neighborhood of the origin in affine $m$-space~$\AA^m$.  Then since the
1-form~$\alpha =0$ is invariant under the linear group~$GL_m\RR$ acting
on~$\AA^m$ fixing the origin, so too is the $q$-form~$\omega _M(0)\res
{p=0\in \AA^m}\in {\textstyle\bigwedge} ^q(\RR^m)^*$. Since~$q>0$ the only
$GL_m\RR$-invariant $q$-form is zero.
 
Suppose that $\alpha _1,\alpha _2\in \Omega ^{1}(M;V)$ have identical
$q$-jets at~$p\in M$, so for some functions~$f_0,f_1,\dots ,f_q$ which vanish
at~$p$ and some $\beta \in \Omega ^1(M;V)$ we have 
  \begin{equation}\label{eq:a12}
     \alpha _2 = \alpha _1 + f_0f_1\cdots f_q\beta .
  \end{equation}
Let $\ta_i,\tb$ be the pullbacks of~$\alpha _i,\beta $ to~$M\times
\AA^{q+1}$, and let $t_0,t_1,\dots ,t_q$ be the standard coordinates
on~$\AA^{q+1}$.  Then $\alpha _i,f_0f_1\cdots f_q\beta $ are the pullbacks of
$\ta_i,t_0t_1\cdots t_q\tb$ via the map $id_M\times (f_0,f_1,\dots
,f_q)\:M\to M\times \AA^{q+1}$.  Observe that this map sends~$p$ to~$(p,0)$.
By naturality it suffices to show that
  \begin{equation}\label{eq:a13}
     \omega _{M\times \AA^{q+1}}(\ta_2)\res{(p,0)}\quad \textnormal{and}\quad
     \omega _{M\times \AA^{q+1}}(\ta_1 + t_0t_1\cdots t_q\tb) \res{(p,0)}
  \end{equation}
agree as $q$-forms on~$T_pM\times \RR^{q+1}$.  Decomposing with respect to
the standard basis of~$\RR^{q+1}$ and using multilinearity we
see\footnote{Proposition~\ref{thm:m3} is a general abstraction of this
assertion.} that it suffices to evaluate the $q$-forms~\eqref{eq:a13} on
$q$-vectors which have vanishing component along the
$i^{\textnormal{th}}$~axis in~$\RR^{q+1}$ for some~$i$.  By naturality those
evaluations can be made by restricting the forms~$\omega (\ta_2)$ and~$\omega
(\ta_1 + t_0t_1\cdots t_q\tb)$ to the submanifold~$N_i\subset M\times
\AA^{q+1}$ defined by~$t_i=0$.  But these $q$-forms agree on~$N_i$, since the
1-forms~$\ta_2$ and~$\ta_1 + t_0t_1\cdots t_q\tb$ restrict to equal 1-forms
on~$N_i$.
  \end{proof}

Lemma~\ref{thm:a4} implies that $\omega \in \Omega ^q(\Omega ^1\otimes V)$ is
determined by its value at the origin of vector spaces~$W\in \Man$, and
furthermore for~$\alpha \in \Omega ^1(W;V)$ the value of~$\omega \mstrut
_W(\alpha )$ at the origin of~$W$ is computed by a map (of sets)
  \begin{equation}\label{eq:a14}
     \tw\mstrut _W\:J^q(W;W^*\otimes V)\longrightarrow {\textstyle\bigwedge}
     ^qW^*,  
  \end{equation}
where $J^q(W;W^*\otimes V)$~is the finite dimensional vector space of
$q$-jets of elements of~$\Omega ^1(W;V)$ at the origin.  Furthermore,
\eqref{eq:a14} is functorial in~$W$, in particular for linear maps.  Let
$\Vect$ be the category of finite dimensional real vector spaces and linear
maps.  Then
  \begin{equation}\label{eq:123}
     \begin{aligned} W&\longmapsto J^q(W;W^*\otimes V) \\ W&\longmapsto
      {\textstyle\bigwedge} ^qW^*\end{aligned} 
  \end{equation}
are polynomial functors $\Vect\op\to\Vect$, and the first is reduced.  (See
Appendix~\ref{sec:11} for definitions.)  The map~\eqref{eq:a14} is a
set-theoretic transformation between these two functors.  The following
statement is then a direct consequence\footnote{As Theorem~\ref{thm:mm1}
applies to covariant functors, and \eqref{eq:123}~are contravariant,
precompose with the contravariant functor $W\mapsto W^*$ before applying the
theorem.} of Theorem~\ref{thm:mm1}.

  \begin{lemma}[]\label{thm:a5}
 The map~$\tw_{W}$ is a polynomial of degree~$\le q$. 
  \end{lemma}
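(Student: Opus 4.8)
The plan is to deduce the lemma from Theorem~\ref{thm:mm1} of the appendix, which guarantees that every set-theoretic natural transformation between polynomial functors on~$\Vect$ whose source is reduced is, on each vector space, given by a polynomial map. All the real work is in checking the hypotheses and then reading off the degree bound; no genuinely new argument is needed.

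First I would put the two assignments of~\eqref{eq:123} into the covariant form required by Theorem~\ref{thm:mm1} by precomposing with the contravariant functor $W\mapsto W^{*}$, as indicated in the footnote. Under this substitution the target becomes $U\mapsto\bigwedge^{q}U$, the $q$-th exterior power of the identity functor, hence a homogeneous polynomial functor of degree exactly~$q$. For the source, identifying a $q$-jet at the origin with its Taylor polynomial gives a natural isomorphism $J^{q}(U^{*};U\otimes V)\cong\bigoplus_{k=0}^{q}\Sym^{k}U\otimes U\otimes V$; this exhibits the source as a polynomial functor (of degree $q+1$), and it is reduced, since it sends the zero vector space to zero, as the explicit description shows. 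The map $\tw_{W}$ of~\eqref{eq:a14}, transported along $W\mapsto W^{*}$, is precisely a set-theoretic natural transformation between these two functors, so Theorem~\ref{thm:mm1} applies and already yields that $\tw_{W}$ is polynomial on each~$W$.

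The one point that needs care is the sharp bound: the degree is $\le q$, not the $\le q+1$ one might naively read off from the source. I would obtain it from the homogeneity of the target together with the reducedness of the source. Writing $\Phi$ for the transformation and decomposing $\Phi_{U}=\sum_{m}\Phi_{U}^{(m)}$ into homogeneous parts as a polynomial map, naturality with respect to the scalar endomorphisms $\lambda\cdot\id_{U}$ forces every monomial occurring in $\Phi_{U}^{(m)}$ to scale by~$\lambda^{q}$; since the source is reduced, each of its homogeneous polynomial-functor summands has degree $\ge 1$, so a monomial of polynomial degree~$m$ scales by $\lambda^{s}$ with $s\ge m$, whence $m\le q$. (If Theorem~\ref{thm:mm1} already records that a transformation from a reduced functor into a functor of degree~$q$ has degree~$\le q$, this step collapses to a citation.) Applying this with $\Phi=\tw_{(-)^{*}}$ gives $\deg\tw_{W}\le q$. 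Thus the main obstacle is not a real difficulty but bookkeeping: getting the variance right and remembering that the degree is controlled by the target~$\bigwedge^{q}$ rather than by the source~$J^{q}$.
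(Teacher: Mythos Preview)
Your proposal is correct and follows exactly the paper's approach: the paper also deduces the lemma as an immediate consequence of Theorem~\ref{thm:mm1} after precomposing with $W\mapsto W^{*}$. Your extra paragraph on the degree bound is unnecessary, since Theorem~\ref{thm:mm1} already records that the transformation has degree at most the degree of the target functor, and $\bigwedge^{q}$ has degree~$q$; as you yourself note, the step collapses to a citation.
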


Next, use the isomorphism 
  \begin{equation}\label{eq:a18}
     J^q(W;W^*\otimes V)\,\cong\, \bigoplus \limits_{j=0}^{q}
     \,\bigl(\Sym^jW^*\otimes 
     W^*\otimes V \bigr)
  \end{equation}
to write the polynomial~$\tw_W$ \eqref{eq:a14} as a \emph{linear} map
  \begin{equation}\label{eq:a19}
     \TW\:\bigoplus \limits_{i=0}^{q} \,\Sym^i\left( \bigoplus \limits_{j=0}^{q}
     \,\bigl(\Sym^jW^*\otimes W^*\otimes V \bigr) \right)\longrightarrow
     {\textstyle\bigwedge} ^qW^*. 
  \end{equation}

  \begin{lemma}[Weyl]\label{thm:35}
 A nonzero $GL(W)$-invariant linear map $\otimes ^NW^*\to
{\textstyle\bigwedge} ^qW^*$ has~$N=q$ and is a multiple of the
antisymmetrization map. 
  \end{lemma}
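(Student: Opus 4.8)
The plan is to treat the two claims of the lemma separately. For the first — that $N=q$ — I would use the central $\RR^\times\subset GL(W)$: a scalar $t\,\id_W$ acts on $W^*$ as multiplication by $t^{-1}$, hence on $\otimes^NW^*$ as $t^{-N}$ and on ${\textstyle\bigwedge}^qW^*$ as $t^{-q}$. If $\phi\colon\otimes^NW^*\to{\textstyle\bigwedge}^qW^*$ is a nonzero intertwiner, then choosing $x$ with $\phi(x)\ne0$ gives $t^{-N}\phi(x)=t^{-q}\phi(x)$ for all $t\in\RR^\times$, so $N=q$ (the statement being trivial for $W=0$).

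For the second claim I would invoke Schur--Weyl duality: the commutant of $GL(W)$ acting on $\otimes^qW^*$ is spanned by the permutation operators of the tensor factors, i.e.\ it is the image of the group algebra $\RR[S_q]$. The subspace of alternating tensors is the image of the idempotent $e=\frac1{q!}\sum_{\sigma\in S_q}(\operatorname{sgn}\sigma)\,\sigma$, and the canonical surjection $\otimes^qW^*\to{\textstyle\bigwedge}^qW^*$ restricts to an isomorphism of $e\cdot\otimes^qW^*$ onto ${\textstyle\bigwedge}^qW^*$. Thus, composing a $GL(W)$-invariant linear map $\phi\colon\otimes^qW^*\to{\textstyle\bigwedge}^qW^*$ with the inverse of this isomorphism and with the inclusion $e\cdot\otimes^qW^*\subset\otimes^qW^*$ produces a $GL(W)$-equivariant endomorphism $\tilde\phi$ of $\otimes^qW^*$ whose image lies in $e\cdot\otimes^qW^*$, so that $e\tilde\phi=\tilde\phi$. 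Writing $\tilde\phi$ as the operator attached to $\sum_\sigma c_\sigma\sigma\in\RR[S_q]$ and using $e\sigma=(\operatorname{sgn}\sigma)\,e$, we obtain $\tilde\phi=\bigl(\sum_\sigma c_\sigma\operatorname{sgn}\sigma\bigr)e$, so $\tilde\phi$, and hence $\phi$, is a scalar multiple of the antisymmetrization map $\otimes^qW^*\to{\textstyle\bigwedge}^qW^*$; the scalar is nonzero because $\phi$ is.

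There is no deep obstacle here: the only non-formal input is the classical Schur--Weyl theorem that the $GL(W)$-commutant on $\otimes^qW^*$ consists of the permutation operators, and the rest is bookkeeping — though the bookkeeping requires care with two things: the dualization $W\leftrightarrow W^*$ (the functors in \eqref{eq:123} are contravariant, so one works with $W^*$ throughout), and tracking ${\textstyle\bigwedge}^qW^*$ simultaneously as the direct summand $e\cdot\otimes^qW^*$ and as a quotient of $\otimes^qW^*$; the exact normalization of the resulting generator is irrelevant, since only ``a multiple of'' is asserted. As an alternative one can run the same bookkeeping through the first fundamental theorem of invariant theory for $GL(W)$: a $GL(W)$-invariant element of $W^{\otimes N}\otimes{\textstyle\bigwedge}^qW^*$ is a linear combination of complete contractions pairing covariant with contravariant slots, which forces $N=q$, and the antisymmetry of the ${\textstyle\bigwedge}^q$-factor collapses all such pairings to a single invariant up to sign.
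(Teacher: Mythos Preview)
Your proof is correct and shares the paper's two-step structure: invariance under scaling for $N=q$, then Weyl's representation theory for uniqueness. The paper's version of the second step is terser: it cites a theorem of Weyl that $\otimes^qW^*$ decomposes into $GL(W)$-irreducibles of multiplicity one, and concludes (implicitly by Schur's lemma) that any equivariant map to ${\textstyle\bigwedge}^qW^*$ is a multiple of antisymmetrization. As literally stated that multiplicity-one claim is not quite right---the full tensor power is not multiplicity-free as a $GL(W)$-module once $q\ge3$; what is true and sufficient is that ${\textstyle\bigwedge}^qW^*$ itself occurs with multiplicity one, since the sign representation of~$S_q$ is one-dimensional. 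Your route through the commutant description $\End_{GL(W)}(\otimes^qW^*)=\text{image of }\RR[S_q]$ and the alternating idempotent~$e$ unpacks exactly this fact and sidesteps the imprecision; the first-fundamental-theorem alternative you sketch is an equally valid repackaging. Same underlying input, slightly different (and in your case more careful) bookkeeping.
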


  \begin{proof}
 That $N=q$ follows from invariance under the scaling
subgroup~$\RR^\times\subset GL(W)$.  A theorem of Weyl asserts that $\otimes
^qW^*$~ is a direct sum of irreducible representations of~$GL(W)$ of
multiplicity one.  Antisymmetrization is an equivariant map $\otimes
^qW^*\to{\textstyle\bigwedge} ^qW^*$, so any other map must be a multiple of
it. 
  \end{proof}

  \begin{lemma}[]\label{thm:a6}
 $\TW$~factors through $\bigoplus\limits_{i=0}^q\Sym ^{i}\bigl(W^*\otimes
V\;\oplus\;{\textstyle\bigwedge} ^2W^*\otimes V\bigr)$.
  \end{lemma}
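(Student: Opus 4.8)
The plan is to combine Weyl's Lemma~\ref{thm:35} with the internal symmetries built into the jet space~\eqref{eq:a18}. First I would split the linear map $\TW$ of~\eqref{eq:a19} into its homogeneous components $\TW^{(i)}\:\Sym^i\bigl(J^q(W;W^*\otimes V)\bigr)\to\bigwedge^qW^*$ for $0\le i\le q$, expand the symmetric powers of the direct sum~\eqref{eq:a18}, and---using that in characteristic zero $\Sym^i$ is a $GL(W)$-equivariant direct summand of $\otimes^i$---reduce to understanding the restriction of $\TW^{(i)}$ to each summand $\bigotimes_{k=1}^i\bigl(\Sym^{j_k}W^*\otimes W^*\otimes V\bigr)$ of $\otimes^i\bigl(J^q(W;W^*\otimes V)\bigr)$, indexed by $(j_1,\dots ,j_i)$ with $0\le j_k\le q$. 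Such a summand is a $GL(W)$-submodule of $\bigl(\otimes^NW^*\bigr)\otimes V^{\otimes i}$ with $N=\sum_k(j_k+1)$, where $V^{\otimes i}$ is $GL(W)$-trivial and the $W^*$-slots carry the constraint that, in the $k$-th factor, the first $j_k$ of them are mutually symmetric.

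Next I would apply Lemma~\ref{thm:35} with the $V$-variables held as spectators: for each fixed $\xi\in V^{\otimes i}$ the assignment $\eta\mapsto\TW(\eta\otimes\xi)$ is a $GL(W)$-equivariant linear map $\otimes^NW^*\to\bigwedge^qW^*$, hence vanishes unless $N=q$ and is a scalar multiple of the antisymmetrization $\otimes^qW^*\to\bigwedge^qW^*$ when $N=q$. Since antisymmetrization annihilates any tensor symmetric in two of its slots, $\TW$ vanishes on the summand as soon as some factor supplies two mutually symmetric $W^*$-slots, i.e.\ as soon as some $j_k\ge2$. So only $j_k\in\{0,1\}$ can contribute. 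A $j_k=0$ factor is $W^*\otimes V$; a $j_k=1$ factor is $W^*\otimes W^*\otimes V\cong(\Sym^2W^*\oplus\bigwedge^2W^*)\otimes V$, and the same antisymmetrization argument kills its $\Sym^2W^*\otimes V$ part. Hence $\TW$ vanishes on every summand of $\Sym^i(J^q)$ that involves a factor outside $W^*\otimes V\oplus\bigwedge^2W^*\otimes V$, which is exactly the assertion that $\TW$ factors through $\bigoplus_{i=0}^q\Sym^i\bigl(W^*\otimes V\oplus\bigwedge^2W^*\otimes V\bigr)$; the range $i\le q$ is already guaranteed by Lemma~\ref{thm:a5}.

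I expect the one point requiring care to be purely organizational: identifying correctly, in the decomposition of $\Sym^i$ of a direct sum, which $W^*$-slots are forced to be mutually symmetric---in particular noticing that the two $W^*$-slots of a $j_k=1$ factor are \emph{a priori independent}, so that one must first decompose $W^*\otimes W^*=\Sym^2W^*\oplus\bigwedge^2W^*$ before invoking Weyl. Everything else is a routine application of Lemma~\ref{thm:35}, carrying $V^{\otimes i}$ along as a trivial representation.
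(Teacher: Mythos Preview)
Your proposal is correct and follows essentially the same route as the paper: pass from $\Sym^i$ to $\otimes^i$, decompose into summands indexed by $(j_1,\dots,j_i)$, apply Lemma~\ref{thm:35} with the $V$-factors as spectators to identify the only possible $GL(W)$-equivariant map as antisymmetrization, and then observe that antisymmetrization kills any summand with $j_k\ge 2$ and, for $j_k=1$, the $\Sym^2W^*$ half of $W^*\otimes W^*$. Your write-up is in fact a bit more explicit than the paper's on the $j=1$ step, correctly flagging that the two $W^*$-slots there are a priori independent and must be decomposed before invoking Weyl.
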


  \begin{proof}
 Naturality implies that \eqref{eq:a19}~is invariant under~$GL(W)$.  The
domain of~$\TW$ is a direct sum of quotients of subspaces~$U$ of vector
spaces of the form~$(\otimes ^kW^*)\otimes (\otimes ^\ell V)$ for
some~$k,\ell $.  By Weyl's theorem any equivariant map must have~$k=q$ and be
a multiple of antisymmetrization $\otimes ^qW^*\to{\textstyle\bigwedge}
^qW^*$ tensored with a multilinear form on~$V$.  The restriction
of~\eqref{eq:a19} to terms with~$j\ge 2$ must vanish since for these terms
the lift of~\eqref{eq:a19} to a map $U\subset (\otimes ^kW^*)\otimes (\otimes
^\ell V)\to {\textstyle\bigwedge} ^qW^*$ has~$k\ge2$ and would be symmetric
in at least two of the factors of~$W^*$ in the domain, whereas
antisymmetrization is completely antisymmetric.  Similarly, for~$j=1$ it must
factor through the quotient~${\textstyle\bigwedge} ^2W^*$ of~$W^*\otimes
W^*$.
  \end{proof}

\noindent
 It follows from Lemma~\ref{thm:a6} that for each finite dimensional real
vector space~$W$ there is a linear map 
  \begin{equation}\label{eq:59}
     L\mstrut _W\:\df\OtV\longrightarrow \Hom\mstrut _{GL(W)}\Bigl( \Sym
     \bigl(W^*\otimes V\bigr)\;\otimes \; 
     \Sym\bigl({\textstyle\bigwedge} ^2W^*\otimes 
     V\bigr)\;;\; {\textstyle\bigwedge} ^{\bullet }W^*\Bigr),
  \end{equation}
where `$\Sym$'~denotes the entire symmetric algebra.
As $W$~varies both of
  \begin{equation}\label{eq:67}
     \begin{aligned} F_1\: W&\longmapsto \Sym \bigl(W^*\otimes
      V\bigr)\;\otimes \; \Sym\bigl({\textstyle\bigwedge} ^2W^*\otimes
      V\bigr) \\ F_2\: W&\longmapsto {\textstyle\bigwedge} ^{\bullet
      }W^*\end{aligned} 
  \end{equation}
are polynomial functors $\Vect\op\to\Vect$, and \eqref{eq:59}~defines a
linear map~$L$ from~$\Omega ^{\bullet }(\OtV)$ to the vector space of natural
transformations $F_1\to F_2$.
  Recall the linear map~$\eta $ in~\eqref{eq:a22}. 

  \begin{lemma}[]\label{thm:51}
 $L\circ \eta $~is injective. 
  \end{lemma}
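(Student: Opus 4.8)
The plan is to prove injectivity by showing that for every nonzero $P\in\Kos V^*$ there is a finite dimensional real vector space $W$ with $L_W(\eta(P))\neq 0$. First I would make $\eta$ and $L_W\circ\eta$ completely explicit. Fix a basis $e_1,\dots,e_n$ of $V$ with dual basis $e^1,\dots,e^n$, and write the generators of $\Kos V^*$ as the degree-one elements $\theta^a:=e^a\in{\textstyle\bigwedge}^1V^*$ and the degree-two elements $u^a:=\widetilde{e^a}\in\Sym^1V^*$. Since $\eta$ is a homomorphism of differential graded algebras, \eqref{eq:33}--\eqref{eq:35} give, for a test manifold $M$ and $\alpha=\sum_a\alpha^a\otimes e_a\in\Omega^1(M;V)$ with $\alpha^a\in\Omega^1(M)$, the identities $\eta(\theta^a)(\alpha)=\alpha^a$ and $\eta(u^a)(\alpha)=d\alpha^a$; hence $\eta(P)(\alpha)$ is obtained from $P$ by the substitutions $\theta^a\mapsto\alpha^a$, $u^a\mapsto d\alpha^a$, with products read as wedge products. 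By Lemma~\ref{thm:a4} and the construction of the maps $L_W$ in \eqref{eq:a14}--\eqref{eq:59}, $L_W(\eta(P))$ is the polarization of the polynomial map sending a jet datum $(A,\Omega)\in(W^*\otimes V)\oplus({\textstyle\bigwedge}^2W^*\otimes V)$, which is realized as $(\alpha|_0,d\alpha|_0)$ for some $\alpha\in\Omega^1(W;V)$, to the value at the origin of $\eta(P)_W(\alpha)$; writing $A=\sum_aA^a\otimes e_a$ and $\Omega=\sum_a\Omega^a\otimes e_a$ with $A^a\in W^*$, $\Omega^a\in{\textstyle\bigwedge}^2W^*$, that value is $P$ with $\theta^a\mapsto A^a$ and $u^a\mapsto\Omega^a$, products becoming wedge products. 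As every $(A,\Omega)$ arises this way, it is enough to find $W$ and $(A,\Omega)$ for which this element of ${\textstyle\bigwedge}^{\bullet}W^*$ does not vanish.

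Since $L_W(\eta(P))$ restricted to the summand $\Sym^p(W^*\otimes V)\otimes\Sym^q({\textstyle\bigwedge}^2W^*\otimes V)$ of its domain depends only on the component of $P$ of bidegree $(p,q)$ (exterior degree $p$ in the $\theta^a$, polynomial degree $q$ in the $u^a$), I may assume $P$ is homogeneous of bidegree $(p,q)$ and write $P=\sum_{I,J}c_{I,J}\,\theta^Iu^J$ in the monomial basis, with $I=(i_1<\cdots<i_p)$, $J=(j_1\le\cdots\le j_q)$, and $c_{I_0,J_0}\neq 0$ for some pair. I would then choose $W$ with an ordered basis of $W^*$ partitioned into disjoint blocks: one coordinate $x^a$ for each $a$, and a further block of $2q$ coordinates $\xi^a_1,\dots,\xi^a_{2q}$ for each $a$; set $A^a:=x^a$ and $\Omega^a:=\sum_{l=1}^q\xi^a_{2l-1}\wedge\xi^a_{2l}$, a $2$-form of rank $2q$. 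Then $L_W(\eta(P))(A,\Omega)=\sum_{I,J}c_{I,J}\,\mathcal M_{I,J}$ with $\mathcal M_{I,J}=x^{i_1}\wedge\cdots\wedge x^{i_p}\wedge\Omega^{j_1}\wedge\cdots\wedge\Omega^{j_q}$, and I would finish by two observations. First, each $\mathcal M_{I,J}\neq 0$: the $x^{i_r}$ are distinct, an index $a$ occurring in $J$ with multiplicity $m\le q$ contributes the factor $(\Omega^a)^{\wedge m}\neq 0$ (here the rank $2q$ of $\Omega^a$ is used), and the blocks involved are mutually disjoint. Second, grading ${\textstyle\bigwedge}^{\bullet}W^*$ by the multidegree in the blocks, $\mathcal M_{I,J}$ is homogeneous of multidegree recording $I$ (degree $1$ in block $\{x^a\}$ exactly when $a\in I$) and $J$ (twice the multiplicity of $a$ in $J$ in the block $\{\xi^a_{\bullet}\}$), so distinct pairs $(I,J)$ yield $\mathcal M_{I,J}$ of distinct multidegree, hence linearly independent. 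Therefore $\sum_{I,J}c_{I,J}\mathcal M_{I,J}\neq 0$, so $L_W(\eta(P))\neq 0$ and $L\circ\eta$ is injective.

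The step I expect to be the main obstacle is the treatment of repeated indices in the symmetric factor: the obvious choice of \emph{decomposable} $2$-forms $\Omega^a$ makes $(\Omega^a)^{\wedge m}$ vanish once $m>1$, so one is forced to take the $\Omega^a$ of rank $2q$, and then one must rule out cancellation among the $\mathcal M_{I,J}$, which is exactly the purpose of the block multigrading. By contrast, making $L_W\circ\eta$ explicit and reducing to a fixed bidegree are routine.
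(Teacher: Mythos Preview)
Your argument is correct, but it takes a different route from the paper's. The paper works directly with the \emph{polarized} (multilinear) map $L_W\circ\eta$: it writes down the explicit formula for the image of $\phi\in\Koss^{p,q}V^*$ as the multilinear map
\[
[w^*_1\otimes v_1],\dots,[w^*_p\otimes v_p],[(w^*_{p+1}\wedge w^*_{p+2})\otimes v_{p+1}],\dots
\longmapsto \phi(v_1,\dots,v_{p+q})\,w^*_1\wedge\cdots\wedge w^*_{p+2q},
\]
chooses $\dim W=p+2q$ with a fixed basis of~$W^*$, and then varies the~$v_i\in V$ to recover~$\phi$. Because the slots in the multilinear map are distinct, decomposable $2$-forms suffice and the repeated-index issue you anticipated never arises.

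By contrast, you stay at the \emph{polynomial} level: you evaluate the unpolarized map at a single point $(A,\Omega)$ in a larger space $W$ (dimension $n(1+2q)$, with a block for each basis vector of~$V$), and separate the monomials of~$P$ by a block multigrading on~${\textstyle\bigwedge}^\bullet W^*$. This forces you to take each $\Omega^a$ of rank~$2q$ so that $(\Omega^a)^{\wedge m}$ survives for $m\le q$. The trade-off: the paper's approach is more economical (smaller $W$, decomposable $2$-forms, and the same formula is reused in the surjectivity proof), while yours avoids writing out the full polarization and is arguably more hands-on. Both arguments are clean once one reduces to a fixed bidegree $(p,q)$, which you do correctly.
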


  \begin{proof}
 The Koszul algebra~$\Kos V^*$ is doubly graded with homogeneous components 
  \begin{equation}\label{eq:60}
     \Koss^{p,q}V^* = {\textstyle\bigwedge} ^pV^*\otimes \Sym^qV^*,\qquad
     p,q\in \ZZ^{\ge0}. 
  \end{equation}
Identify $\Koss^{p,q}V^*$~with the space of multilinear maps $\phi
\:V^{\times (p+q)}\to\RR$ which are skew-symmetric in the first $p$~arguments
and symmetric in the last $q$~arguments.  The codomain~$A$ of~$L$ is doubly
graded with homogeneous components
  \begin{equation}\label{eq:61}
     A^{p,q}(W) = \Hom\mstrut _{GL(W)}\Bigl( \Sym ^{p }\bigl(W^*\otimes
     V\bigr)\;\otimes \;\Sym^{q }\bigl({\textstyle\bigwedge} ^2W^*\otimes
     V\bigr)\;;\; {\textstyle\bigwedge} ^{p+2q }W^*\Bigr),\qquad p,q\in
     \ZZ^{\ge0}. 
  \end{equation}
(Invariance under scaling dictates the degree~$p+2q$.)  The map $L\circ \eta
$ preserves the bigrading.  Fix a bidegree~$p,q$.  The image of a multilinear
map 
$\phi \in\Koss^{p,q}V^*$ under $L_W\circ \eta $ is the multilinear map
  \begin{multline}\label{eq:62}
     [w^*_1\otimes v_1],\dots ,[w^*_p\otimes v_p],[(w^*_{p+1}\wedge
     w^*_{p+2}),v_{p+1}],\dots , [(w^*_{p+2q-1}\wedge w^*_{p+2q}),v_{p+q}]
     \\ \longmapsto \phi (v_1,\dots ,v_p,v_{p+1},\dots
     ,v_{p+q})\;w^*_1\wedge \cdots\wedge w^*_{p+2q}, 
  \end{multline}
where $v_i\in V$, $w^*_i\in W^*$.  To prove that $L\circ \eta $~is injective
in bidegree~$p,q$ choose $\dim W=p+2q$, fix a basis $w_1^*,\dots ,w_{p+2q}^*$
of~$W^*$, and evaluate~\eqref{eq:62} on all~$v_i\in V$ to see that if
$(L_W\circ \eta )\phi =0$, then~$\phi =0$.
  \end{proof}

  \begin{proof}[Proof of Theorem~\ref{thm:a3}]
 Lemma~\ref{thm:a4} and the subsequent arguments prove that $L$~is injective.
To prove Theorem~\ref{thm:a3}, which asserts that $\eta $~is an isomorphism,
it suffices now to show that $L\circ \eta $~is surjective.  We again fix a
bidegree~$p,q$ and a vector space~$W$.  Let $h\in A^{p,q}(W)$
(see~\eqref{eq:61}).  Then $h$~lifts to an element~$\tih$ of
     \begin{multline}\label{eq:64} 
       \Hom\mstrut _{GL(W)}\Bigl( \Tens ^{p
       }\bigl(W^*\otimes V\bigr)\;\otimes \;\Tens^{q }\bigl(\otimes
       ^2W^*\otimes V\bigr)\;;\; {\textstyle\bigwedge} ^{p+2q }W^*\Bigr) \\
       \cong \Hom\mstrut _{GL(W)}\Bigl( \Tens ^{p+2q }(W^*)
       \;\otimes \;\Tens^{p+q }(V)\;;\; {\textstyle\bigwedge} ^{p+2q
     }W^*\Bigr) \end{multline}  
By Lemma~\ref{thm:35}, $\tih$~ must have the form 
  \begin{multline}\label{eq:65}
     [w^*_1\otimes v_1],\dots ,[w^*_p\otimes v_p],[(w^*_{p+1}\wedge
     w^*_{p+2}),v_{p+1}],\dots , [(w^*_{p+2q-1}\wedge w^*_{p+2q}),v_{p+q}] \\
     \longmapsto \phi(v_{1},\dots
     ,v_{p+q})\;w^*_1\wedge \cdots\wedge w^*_{p+2q}, 
  \end{multline}
for some $\phi \in \Tens^{p+q}(V^*)$.  The fact that $\tih$~ factors
through~$A^{p,q}(W)$ in~\eqref{eq:61} implies that $\phi $~is skew-symmetric
in the first $p$~variables and symmetric in the last~$q$, so $h$~ is in the
image of $\Koss^{p,q}V^*\to A^{p,q}(W)$.
  \end{proof}

The following is a slight generalization of Theorem~\ref{thm:34}(i).

  \begin{corollary}[]\label{thm:a7}
 Let $V$~be a finite dimensional real vector space and $Y$~a smooth
manifold.  Then the de Rham complex of~$Y\,\times \,(\Omega ^1\otimes V)$ is
$\Omega (Y;\Koss V^*)^{\bullet }$ with differential the sum of the de Rham
differential on~$Y$ and the Koszul differential~\eqref{eq:a9}. 
  \end{corollary}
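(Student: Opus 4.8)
The plan is to mimic the proof of Theorem~\ref{thm:a3}, carrying the auxiliary smooth manifold $Y$ along as an inert parameter. First I would set up the analogue of the map~\eqref{eq:a22}: define a homomorphism of differential graded algebras $\eta_Y\:\Omega(Y;\Koss V^*)^{\bullet}\to\Omega^{\bullet}\bigl(Y\times(\OtV)\bigr)$ by combining the pullback of forms from $Y$ with the universal construction $\eta$ of~\eqref{eq:a22} on the $\OtV$-factor; concretely, an element $\beta\otimes x$ with $\beta\in\Omega(Y)$ and $x\in\Koss V^*$ maps to $p_Y^*\beta\wedge \eta(x)$, where $\eta(x)$ is regarded as a form on the product via the projection to $\OtV$. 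Since both projections are maps of sheaves, this is well-defined, and it is manifestly a dga map once one checks the Koszul and de Rham differentials match $d_Y+d_K$.

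Next I would prove the analogue of Lemma~\ref{thm:a4}: for $\omega\in\Omega^q\bigl(Y\times(\OtV)\bigr)$ and a test manifold $M$ with a map $M\to Y\times(\OtV)$ given by a smooth map $y\:M\to Y$ and $\alpha\in\Omega^1(M;V)$, the value of $\omega_M(y,\alpha)$ at $p\in M$ depends only on the $q$-jet at $p$ of the pair $(y,\alpha)$. The argument is the same naturality/restriction-to-submanifolds argument as before, now applied to the product $M\times\AA^{q+1}$ mapping to $Y\times(\OtV)$; the presence of $Y$ changes nothing since $y$ is just extra jet data to carry along. This reduces $\omega$ to a set-theoretic transformation of polynomial functors on $\Vect$, where now the source functor is $W\mapsto J^q(W;W^*\otimes V)\oplus J^q(W;W^*\otimes T_{y_0}Y)$-style data, or more cleanly: fix a point $y_0\in Y$, work in a coordinate chart so that jets of $y$ at $y_0$ live in a fixed finite-dimensional vector space $\Sym^{\le q}(W^*)\otimes\RR^{\dim Y}$, and apply Theorem~\ref{thm:mm1} exactly as in Lemma~\ref{thm:a5} to conclude $\tw_W$ is polynomial of degree $\le q$. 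Then Weyl's lemma (Lemma~\ref{thm:35}) and the argument of Lemma~\ref{thm:a6} force the dependence on the jet of $\alpha$ to factor through $W^*\otimes V\;\oplus\;{\textstyle\bigwedge}^2W^*\otimes V$ and — crucially — the dependence on the jet of $y$ to factor through $\Omega^1(Y)$-data at $y_0$ together with $\Omega^0(Y)$-data, because antisymmetrization in the $W^*$-slots kills all symmetric ($j\ge2$) jet components in the $Y$-directions just as it does in the $V$-directions. The bookkeeping here is the analogue of Lemma~\ref{thm:a6}, and assembling it gives a linear injection $L_Y$ from $\Omega^{\bullet}\bigl(Y\times(\OtV)\bigr)$ into a $\Hom_{GL(W)}$-space, and then the injectivity of $L_Y\circ\eta_Y$ and surjectivity of $L_Y\circ\eta_Y$ are checked exactly as in Lemma~\ref{thm:51} and the proof of Theorem~\ref{thm:a3}, now with an extra factor of $\Omega(Y)$ riding along in every bigraded piece.

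Alternatively — and this is the cleaner route I would actually present — I would deduce the corollary formally from Theorem~\ref{thm:a3} by establishing the general principle that the de Rham complex of a Cartesian product $Y\times\sF$ of a manifold with a sheaf is the (appropriately completed) tensor product $\Omega(Y)\;\widehat\otimes\;\Omega^{\bullet}(\sF)$. The point is that a $q$-form on $Y\times\sF$ evaluated on $(y,\phi)\:M\to Y\times\sF$ can be expanded, using the jet-locality from the generalized Lemma~\ref{thm:a4}, into a finite sum of terms $p_Y^*(\gamma)\wedge p_\sF^*(\delta)$; the finiteness in the $\sF$-direction comes from Theorem~\ref{thm:a3} telling us $\Omega^{\bullet}(\OtV)=\Koss V^*$ is finite-dimensional in each degree, so no completion is needed on that side and we get $\Omega^{\bullet}\bigl(Y\times(\OtV)\bigr)\cong\Omega(Y)\otimes_{\Omega^0(Y)}\bigl(\Omega^0(Y)\otimes_\RR\Koss V^*\bigr)=\Omega(Y;\Koss V^*)$. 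The differential decomposes as $d_Y+d_K$ because $d$ on the product restricts to $d_Y$ on $p_Y^*$-pullbacks and to $d_K$ on $\eta$-images, and these anticommute in the usual way.

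The main obstacle, in either approach, is the jet-locality statement in the $Y$-direction: one must be sure that the naturality argument of Lemma~\ref{thm:a4} genuinely controls dependence on the map to $Y$ and not just on the $V$-valued $1$-form, and then that the $GL(W)$-invariant-theory step (Lemma~\ref{thm:a6}) correctly eliminates all higher symmetric jets of $y$. I expect this to be routine but not entirely automatic, since $Y$ is a general manifold rather than a vector space, so one genuinely needs to pass to charts and invoke functoriality under \emph{local diffeomorphisms} of $Y$ (not merely linear maps) to run the argument — the invariance group fixing a point of $\AA^{\dim Y}$ is again large enough (it contains $GL_{\dim Y}\RR$) to kill the unwanted symmetric jet components, which is exactly the mechanism used for $\omega_M(0)=0$ in the proof of Lemma~\ref{thm:a4}. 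Once that is in hand, everything else is a transcription of the lemmas already proved, with an extra tensor factor of $\Omega(Y)$ carried through the bigraded bookkeeping.
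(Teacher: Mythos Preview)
Your first approach is exactly what the paper does: define $\eta_Y$ as the wedge of pullbacks from the two factors, extend Lemma~\ref{thm:a4} to show $q$-jet dependence on the pair $(\phi,\alpha)$, pass to a chart so $Y$ is locally $V'=T_yY$, and rerun Lemmas~\ref{thm:a5}--\ref{thm:a6} with the extra summand $W^*\otimes T_yY$ appearing in the $\Hom_{GL(W)}$ target (the paper's \eqref{eq:a24} and the triple grading \eqref{eq:70}). Your alternative ``product formula'' route is precisely what the paper flags just before stating the corollary and then declines in favor of the direct argument; as you note, it rests on the same jet-locality lemma anyway.

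One small correction to your obstacles paragraph: you do \emph{not} need functoriality under diffeomorphisms of~$Y$ to kill the higher symmetric jets of $\phi$. Passing to a chart is only used to make the target a vector space so that $\phi_2-\phi_1$ makes sense in the $q$-jet argument. The elimination of $\Sym^jW^*\otimes T_yY$ for $j\ge 2$ comes entirely from $GL(W)$-invariance on the \emph{source}~$W$ via Lemma~\ref{thm:35}, exactly as for the $\alpha$-jets---no invariance on the $Y$ side is invoked. (The $j=0$ term $T_yY$ is absent because $\phi(0)=y$ is fixed, so only $j=1$, namely $W^*\otimes T_yY$, survives.)
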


\noindent
 The de Rham complex of a Cartesian product of manifolds is a completed
tensor product of their individual de Rham complexes, and this corollary
would follow from that statement for generalized manifolds (where there is no
completion since the de Rham complex of~$\OtV$ is finite dimensional).

  \begin{proof}
 We indicate the modifications to the proof of Theorem~\ref{thm:a3} to
accommodate the factor of~$Y$.  Define a homomorphism
  \begin{equation}\label{eq:68}
     \eta \mstrut _Y\:\Omega (Y;\Koss V^*)^{\bullet }\longrightarrow \Omega
     ^{\bullet 
     }(Y\,\times\, \OtV) 
  \end{equation}
as the composition 
  \begin{equation}\label{eq:69}
     \Omega (Y;\Koss V^*)^{\bullet }\xrightarrow{\;\eta \;} \Omega
     \bigl(Y;\Omega (\OtV) \bigr)\xrightarrow{\;\wedge \;}\Omega (Y\,\times\,
     \OtV). 
  \end{equation}
We must prove that $\eta \mstrut _Y$~is an isomorphism.  In Lemma~\ref{thm:a4} for
each test manifold~$M$ we have a pair~$\phi ,\alpha $ consisting of a map
$\phi \:M\to Y$ and a 1-form $\alpha \in \Omega ^1(M;V)$; now $\omega
_M=\omega _M(\alpha ,\phi )$.  The locality argument implies that to
compute~$i^*_U\omega _M(\alpha ,\phi )$ for~$U\subset M$ a small open set
containing~$p\in M$, we can replace~$Y$ by an open set in a vector
space~$V'$.  In the paragraph containing~\eqref{eq:a12} define $\tf\:M\times
\AA^{q+1}\to V'$ as the composition $\phi \circ \pi _1$.  If $\phi _1,\phi
_2$~have the same $q$-jet at~$p$, then $\tf_2=\tf_1+t_0t_1\cdots
t_q\tilde\psi $ for some $\psi \:M\to V'$.  A similar argument to that
surrounding~\eqref{eq:a13} shows that $\omega _M(\alpha ,\phi )\res p$ only
depends on the $q$-jet of~$\phi $ at~$p$, and so we take~$V'=T_{y}Y$, where
$y=\phi (p)$.  Then in~\eqref{eq:a14} we replace~$J^q(W;W^*\otimes V)$ with
the $q$-jets of elements of~$\Omega ^1(W;V)\times \Map(W;T_yY)$ at the origin
of~$W$.  Lemma~\ref{thm:a5} is unchanged.  The isomorphism \eqref{eq:a18}~is
replaced by
  \begin{equation}\label{eq:a23}
     J^q(W;W^*\otimes V\;\oplus \;T_yY)\,\cong\, \bigoplus \limits_{j=0}^{q}
     \,\bigl(\Sym^jW^*\otimes 
     (W^*\otimes V \;\oplus \; T_yY \bigr) 
  \end{equation}
In the statement of Lemma~\ref{thm:a6} there is an extra term~$W^*\otimes T_yY$
under~$\Sym^i$ for each~$i$, and \eqref{eq:59} is now
  \begin{equation}\label{eq:a24}
     L_{y,W}\:\Omega ^{\bullet }(Y\,\times \,\OtV)\longrightarrow
     \Hom_{GL(W)}\Bigl( \Sym(W^*\otimes 
     V\;\oplus 
      \;{\textstyle\bigwedge} ^2W^*\otimes V\;\oplus \;W^*\otimes
     T_yY)\,;\,{\textstyle\bigwedge} ^{\bullet }W^* \Bigr)
  \end{equation}
The composite~$L_{y,W}\circ \eta \mstrut _Y$ preserves a triple grading: 
  \begin{multline}\label{eq:70}
      \Omega ^r(Y;\Koss^{p,q}V^*) \longrightarrow \\
      \Hom_{GL(W)}\Bigl( \Sym^p(W^*\otimes V)\;\otimes \Sym^q(
      \;{\textstyle\bigwedge} ^2W^*\otimes V)\;\otimes \;\Sym^r(W^*\otimes
      T_yY)\,;\,{\textstyle\bigwedge} ^{p+2q+r }W^* \Bigr) 
  \end{multline} 
The proofs that $L_y\circ \eta \mstrut _Y$~is injective and surjective are
similar to those above, so are omitted.
  \end{proof}

  \begin{proof}[Proof of Lemma~\ref{thm:55}]
 We use~\eqref{eq:73}.  So if $\alpha \:M\to\Otg$ is a $\mathfrak{g}$-valued
1-form on a test manifold~$M$, and $\lambda \in \mathfrak{g}^*$, then
by~\eqref{eq:33} the pullback of $\eta (\alpha )$ to~$M$ is the scalar 1-form
$\langle \alpha ,\lambda \rangle\in \Omega ^1(M)$.  The action~\eqref{eq:31}
of~$G$ gives the 1-form $\langle \theta +\Ad_{g\inv }\alpha ,\lambda
\rangle\in \Omega ^1(G\times M)$, and the contraction with~$\xi \in G$
along~$\{e\}\times M$ is~$\langle \xi ,\lambda \rangle$, as claimed in the
first formula of~\eqref{eq:44}.  By~\eqref{eq:35} the functional $\lambda \in
\mathfrak{g}^*$ also determines, via~$\eta $, a scalar 2-form $\langle
d\alpha ,\lambda \rangle\in \Omega ^2(M)$, and the action of~$G$ produces the
2-form
  \begin{equation}\label{eq:78}
     \bigl\langle \frac 12[\theta ,\theta ] +\Ad_{d(g\inv )}\wedge  \alpha
     +\Ad_{g\inv 
     }d\alpha \;,\;\lambda \bigr\rangle\in \Omega ^2(G\times M). 
  \end{equation}
Only the second term contributes to the restriction to $\{e\}\times M$ of the
contraction with~$\xi $, which equals $\langle -\Ad_\xi \alpha ,\lambda
\rangle =\langle \alpha ,-\Ad_\xi^* \lambda \rangle\in \Omega ^1(M)$, the
1-form corresponding to $-\Ad_\xi^* \lambda \in \mathfrak{g}^*$.  This proves
the second line of~\eqref{eq:44}.
  \end{proof}

  \begin{proof}[Proof of Theorem~\ref{thm:a2}]
 By Proposition~\ref{thm:29} and the fact that Definition~\ref{thm:75} takes
place in the homotopy category, de Rham complex of~$\BNG$ equals that
of~$\BNGt$.  Now $\BNGt$~is the simplicial sheaf~\eqref{eq:26} which
represents the action of~$G$ on~$\Otg$.  By Proposition~\ref{thm:54} and
Proposition~\ref{thm:76} its de Rham complex is the basic subcomplex of
$\Omega ^{\bullet }(\Otg)$.  By Theorem~\ref{thm:32} the latter is the Weil
algebra $(\Koss^{\bullet }\mathfrak{g}^*,d_K)$.  Thus we are reduced to
computing the basic subcomplex of the Weil algebra, which is standard.

Following~\cite[\S5]{MQ}, choose a basis~$\{e_i\}$ of~$\mathfrak{g}$ and
corresponding dual basis~$\{e^i\}$ of~$\mathfrak{g}^*$.  Let $\iota _i,\theta
^i$ be interior multiplication by~$e_i$ and exterior multiplication by~$e^i$,
respectively, on~${\textstyle\bigwedge} ^{\bullet }\mathfrak{g}^*$.  Then it
is easy to see that $\prod_{i}(1-\theta ^i\iota _i)$ is projection
onto the \emph{horizontal} elements of~${\textstyle\bigwedge} ^{\bullet
}\mathfrak{g}^*$---those which satisfy Definition~\ref{thm:53}(ii)---and that
the image of this projector is in fact ${\textstyle\bigwedge}
^0\mathfrak{g}^*\subset {\textstyle\bigwedge} ^{\bullet }\mathfrak{g}^*$.
Now ${d\theta ^i}\subset \Sym^1\mathfrak{g}^*\subset \Koss^{\bullet
}\mathfrak{g}^*$, and we set
  \begin{equation}\label{eq:77}
     \Omega ^i=d\theta ^i+\frac 12 f^i_{jk}\theta ^j\wedge \theta ^k, 
  \end{equation}
where $[e_j,e_k]=f^i_{jk}e_i$ and we use the summation convention.  A short
computation from~\eqref{eq:44} shows that $\iota _\ell \Omega ^i=0$ for
all~$i,\ell $.  By a change of basis we may identify the Weil algebra as the
exterior algebra on the span of~$\{\theta ^i\}$ tensor the symmetric algebra
on the span of~$\{\Omega ^i\}$.  It follows that the horizontal elements of
the Weil algebra form the subspace ${\textstyle\bigwedge}
^0\mathfrak{g}^*\otimes \Sym^{\bullet }\mathfrak{g}^*$, where the second
factor is the symmetric algebra on the span of~$\{\Omega ^i\}$.  Therefore,
the basic subalgebra of the Weil algebra are the $G$-invariants in that
symmetric algebra, as claimed.
  \end{proof}

  \begin{remark}[]\label{thm:57}
 The computation of the basic subcomplex of the Weil algebra is a special
case of the computation of the basic subcomplex in Theorem~\ref{thm:34}(ii),
which results in the \emph{Cartan model} for equivariant de Rham cohomology;
see \cite[\S5]{MQ}, \cite[\S4]{GS}. 
  \end{remark}

\appendix

   \section{Transformations of polynomial functors}\label{sec:11}
% lastsubsec@000

%\subsection{Polynomial functors}

Polynomial functors are intimately related to Schur's representation theory
of the symmetric group; for example, see~\cite[\S I,Appendix]{Mac}.  In this
appendix we prove that every set-theoretic natural transformation of
polynomial functors is polynomial.

Let $\vect$ be the category of finite dimensional vector spaces
over\footnote{With the exception of the proof of Lemma~\ref{thm:m2}, the
arguments in this appendix work over any field of characteristic
zero.}~$\RR$.

\begin{definition}
A functor $F:\vect\to \vect$ is {\em polynomial of degree $d$} if for every $V$ and every $f_{1},\dots, f_{n}\in
\End{V}$, the map  
  \begin{equation}\label{eq:124}
     F(\lambda_{1}f_{1}+\dots + \lambda_{n}f_{n}) 
  \end{equation}
is a polynomial of degree $d$ in~$\lambda _1,\dots ,\lambda _n$ with
coefficients in $\End(F(V))$.  A functor $F$ is {\em homogeneous of degree
$d$} if the above polynomial is homogeneous of degree $d$.
\end{definition}

\noindent
 To keep the language simple we make the convention that a polynomial of
degree $d$ might also be a polynomial of lower degree.

Suppose that $F$ is polynomial functor of degree $d$, and write  
  \begin{equation}\label{eq:125}
     F(\lambda\id_V) = \sum_{i=0}^{d} \lambda^{i}\mathbf{e}_{i}(V), \qquad
     \lambda \in k,\quad \mathbf{e}_i(V)\in \End\bigl(F(V) \bigr).  
  \end{equation}
Using 
  \begin{equation}\label{eq:126}
      F(\lambda_{1}\lambda_{2}\id) = F((\lambda_{1} \id)\circ(\lambda_{2}\id))
     = F(\lambda_{1}\id)\circ F(\lambda_{2}(\id)) 
  \end{equation}
one easily checks that the $\mathbf{e}_{i}(V):F(V)\to F(V)$ are orthogonal
idempotents.  Write $F_{i} =\mathbf{e}_{i}F$.  Then $F_{i}$ is homogeneous of
degree $i$, and $F=\bigoplus F_{i}$.

Here is a useful fact about polynomial functors.   For a subset
$I\subset \{1,\dots,n \}$ let  
  \begin{equation}\label{eq:127}
      V^{I} =\{(v_{1},\dots, v_{n})\in V^{n}: v_{i} = 0, i \ne I \} 
  \end{equation}
be the ``$I$-axis,''  and $\epsilon\mstrut _{I}:V^{n}\to V^{n}$ the projection
operator to $V^{I}$.   We write $\epsilon_{i}$ instead of
$\epsilon_{\{i \}}$, so that  
  \begin{equation}\label{eq:128}
      \epsilon_{i}(v_{1},\dots, v_{n}) = (0,\dots, v_{i}, \dots, 0), 
  \end{equation} 
  \begin{equation}\label{eq:129}
      \epsilon\mstrut _{I} = \sum_{i\in I} \epsilon_{i} 
  \end{equation}
and 
\begin{equation}
\label{eq:m2}
\id \mstrut _{V^n}= \epsilon_{1}+\dots + \epsilon_{n}.   
\end{equation}
Write $|I|$ for the number of elements of $I$.

\begin{proposition}
\label{thm:m3} Suppose that $F:\vect\to\vect$ is a polynomial functor
of degree $d$, and let $n>d$.  For every non-zero $x\in F(V^{n})$ there is
a subset $I\subset \{1,\dots,n \}$ with $|I|\le d$ and $F(\epsilon\mstrut
_{I})(x)\ne 0$. 
Equivalently, the product of the restriction maps 
  \begin{equation}\label{eq:130}
     \prod_{|I|=d}F(\epsilon \mstrut _{I})\: F(V^{n}) \to \prod_{|I|=d}
     F(V^{I})  
  \end{equation}
is a monomorphism.
\end{proposition}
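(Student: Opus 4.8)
The plan is to write the identity operator of $F(V^{n})$ as an explicit integer linear combination of the operators $F(\epsilon_{I})$ with $|I|\le d$; evaluating such a relation on a nonzero vector then immediately exhibits an $I$ with $F(\epsilon_{I})(x)\ne 0$, and a short ``padding'' argument upgrades $|I|\le d$ to $|I|=d$ and yields the monomorphism reformulation.

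First I would apply the defining property of a polynomial functor of degree $d$ to the vector space $V^{n}$ and the fixed endomorphisms $\epsilon_{1},\dots,\epsilon_{n}\in\End(V^{n})$ of \eqref{eq:128}, obtaining that
  \[
     P(\lambda_{1},\dots,\lambda_{n}):=F(\lambda_{1}\epsilon_{1}+\dots+\lambda_{n}\epsilon_{n})
  \]
is a polynomial of degree $\le d$ in the $\lambda_{i}$ with coefficients in $\End\bigl(F(V^{n})\bigr)$. Writing $\mathbf 1_{I}\in\{0,1\}^{n}$ for the indicator vector of $I\subset\{1,\dots,n\}$, we have $P(\mathbf 1_{I})=F(\epsilon_{I})$ because $\epsilon_{I}=\sum_{i\in I}\epsilon_{i}$, and $P(\mathbf 1_{\{1,\dots,n\}})=F(\id_{V^{n}})=\id_{F(V^{n})}$ by \eqref{eq:m2}.

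Next comes the main step, an inclusion--exclusion over the Boolean lattice. Set $R_{I}:=\sum_{J\subseteq I}(-1)^{|I\setminus J|}P(\mathbf 1_{J})\in\End\bigl(F(V^{n})\bigr)$, so that M\"obius inversion gives $P(\mathbf 1_{I})=\sum_{J\subseteq I}R_{J}$ for every $I$. I claim $R_{I}=0$ whenever $|I|>d$. Indeed, expand $P=\sum_{m}c_{m}\,m$ into monomials $m=\prod_{i}\lambda_{i}^{a_{i}}$ with $c_{m}\in\End\bigl(F(V^{n})\bigr)$; since $\sum_{i}a_{i}\le d$, the support $S_{m}=\{i:a_{i}>0\}$ satisfies $|S_{m}|\le d<|I|$. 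For a single monomial one has $m(\mathbf 1_{J})=1$ if $S_{m}\subseteq J$ and $m(\mathbf 1_{J})=0$ otherwise; if $S_{m}\not\subseteq I$ then $\sum_{J\subseteq I}(-1)^{|I\setminus J|}m(\mathbf 1_{J})=0$ termwise, while if $S_{m}\subseteq I$ then, substituting $J=S_{m}\cup L$ with $L\subseteq I\setminus S_{m}$,
  \[
     \sum_{J\subseteq I}(-1)^{|I\setminus J|}m(\mathbf 1_{J})
       =(-1)^{|I\setminus S_{m}|}\sum_{L\subseteq I\setminus S_{m}}(-1)^{|L|}=0,
  \]
the last equality because $I\setminus S_{m}\ne\emptyset$. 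Summing over $m$ gives $R_{I}=0$. Taking $I=\{1,\dots,n\}$ and dropping the vanishing terms,
  \[
     \id_{F(V^{n})}=P(\mathbf 1_{\{1,\dots,n\}})=\sum_{|J|\le d}R_{J}
        =\sum_{|K|\le d}\Bigl(\ \sum_{\substack{K\subseteq J\\ |J|\le d}}(-1)^{|J\setminus K|}\Bigr)F(\epsilon_{K}),
  \]
the promised relation. Evaluating on any nonzero $x\in F(V^{n})$ forces $F(\epsilon_{K})(x)\ne 0$ for some $K$ with $|K|\le d$.

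Finally I would upgrade to $|I|=d$ and record the monomorphism reformulation. Since $\epsilon_{J}\epsilon_{K}=\epsilon_{K}$ whenever $K\subseteq J$, functoriality gives $F(\epsilon_{K})=F(\epsilon_{K})\circ F(\epsilon_{J})$, so $F(\epsilon_{J})(x)\ne 0$ for every $J\supseteq K$ with $|J|=d$ — and such $J$ exists because $n>d$; this proves the first assertion with $|I|=d$. For the reformulation, factor $\epsilon_{I}$ as $V^{n}\xrightarrow{\,p_{I}\,}V^{I}\xrightarrow{\,\iota_{I}\,}V^{n}$; from $p_{I}\iota_{I}=\id_{V^{I}}$ one gets $F(p_{I})F(\iota_{I})=\id_{F(V^{I})}$, so $F(\iota_{I})$ is a split monomorphism and hence $\ker F(\epsilon_{I})=\ker F(p_{I})$. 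Therefore the kernel of $\prod_{|I|=d}F(p_{I})$ equals $\bigcap_{|I|=d}\ker F(\epsilon_{I})$, which the preceding paragraph shows is $0$; the converse implication is immediate. The one step that carries content is the vanishing of $R_{I}$ for $|I|>d$, and there the entire point is the elementary observation that the degree bound forces every monomial of $P$ to depend on at most $d$ of the $n$ variables.
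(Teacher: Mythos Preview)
Your proof is correct and takes a genuinely different route from the paper's. The paper expands $F(\lambda_{1}\epsilon_{1}+\cdots+\lambda_{n}\epsilon_{n})=\sum_{J}\lambda^{J}e_{J}$ over multi-indices~$J$, observes (reusing the earlier argument for the homogeneous decomposition) that the $e_{J}$ are orthogonal idempotents summing to the identity, picks for a nonzero~$x$ some $J$ with $e_{J}(x)\ne 0$, and checks directly that $F(\epsilon_{I})(x)\ne 0$ for $I=\{i:j_{i}\ne 0\}$. You bypass the idempotent structure entirely: you only use that $P(\lambda)=F(\sum\lambda_{i}\epsilon_{i})$ has degree~$\le d$, and apply M\"obius inversion on the Boolean lattice to write $\id=P(\mathbf 1_{\{1,\dots,n\}})$ as an explicit integer combination of the values $P(\mathbf 1_{K})=F(\epsilon_{K})$ with $|K|\le d$. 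The paper's argument is more structural and dovetails with the idempotent machinery already in the appendix; yours is more elementary, needing only the vanishing of alternating sums over nonempty sets, and it makes the coefficients explicit. You also spell out the padding step to $|I|=d$ and the equivalence with the monomorphism statement, both of which the paper leaves to the reader. One harmless slip: from $\epsilon_{J}\epsilon_{K}=\epsilon_{K}$ functoriality yields $F(\epsilon_{J})\circ F(\epsilon_{K})=F(\epsilon_{K})$, not the order you wrote; but since $\epsilon_{K}\epsilon_{J}=\epsilon_{K}$ holds as well, the identity you actually need and the conclusion you draw are both correct.
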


 \begin{proof} 
 Since $F$ is polynomial we have \begin{equation} \label{eq:mm1}
F(\lambda_{1}\epsilon_{1}+\dots+\lambda_{n}\epsilon_{n}) = \sum_{J}
\lambda^{J} e\mstrut _{J}
\end{equation}
in which $J=(j_{1}, \dots, j_{n})$, $\lambda^{J}=
\lambda_{1}^{j_{1}}\dots \lambda_{n}^{j_{n}}$, and $e\mstrut _{J}$ is an
endomorphism of $F(V^{n})$.  As above, the $e\mstrut _{J}$ are orthogonal
idempotents.  Setting all of the $\lambda_{i}=1$ and
using~\eqref{eq:m2} one sees that  
  \begin{equation}\label{eq:131}
      \sum_{J}e\mstrut _{J} = \id. 
  \end{equation}
Let $x$ be a non-zero element of $F(V^{n})$.  Apply~ \eqref{eq:131} to~$x$ to
conclude that $e\mstrut _{J}(x)\ne 0$ for some $J$.  Since the degree of $F$
is $d$, at most $d$ of the $j_{i}\in J$ are non-zero.  Let $I=\{i\mid
j_{i}\ne 0 \}$.  Set
  \begin{equation}\label{eq:153}
     \lambda _i=\begin{cases} 1,&i\in I;\\0,&i\notin I\end{cases} 
  \end{equation}
in~\eqref{eq:mm1} to deduce
  \begin{equation}\label{eq:132}
      e\mstrut _{J}\bigl(F(\epsilon\mstrut _{I} )(x)\bigr) = e\mstrut
      _{J}\bigl(e\mstrut _{J}(x) + \cdots \bigr) = e\mstrut _{J}(x)
      \not= 0, 
  \end{equation}
whence $F(\epsilon\mstrut _{I} )(x)\not= 0$.
\end{proof}

  \begin{definition} \label{def:m1}
 Suppose that $V$ and $W$ are real vector spaces.  A function $f:V\to W$ is
{\em polynomial (of degree $d$)} if for every set ${v_{1},\dots, v_{n}}$ of
elements of $V$, the map
  \begin{equation}\label{eq:133}
      f(\lambda_{1}v_{1}+\cdots +\lambda_{n}v_{n}) 
  \end{equation}
is a polynomial in the $\lambda_{i}$ (of degree $d$) with coefficients in
$W$, i.e., there exists a polynomial $g(t_{1},\dots, t_{n})\in
W\otimes\mstrut _{\RR}{\RR[t_{1},\dots, t_{n}]}$ with the property that for
all $\lambda_{1},\dots, \lambda_{n}\in \RR$, and all $v_{1},\dots, v_{n}\in
V$
  \begin{equation}\label{eq:134}
      f(\lambda_{1}v_{1}+\cdots +\lambda_{n}v_{n}) = g(\lambda_{1}, \dots,
     \lambda_{k}). 
  \end{equation} 
 \end{definition}

The following lemma gives a useful criterion for a map to be polynomial.

\begin{lemma} \label{thm:m2} Let $k>0$ be an integer.  Let $f:V\to W$ be a
map, and assume that for every finite set ${v_{1},\dots, v_{n}}$ of elements
of $V$ the map
  \begin{equation}\label{eq:135}
      f(\lambda_{1}^{k}v_{1}+\dots +\lambda_{n}^{k}v_{n}) 
  \end{equation}
is a polynomial in the $\lambda_{i}$ (of degree $k d$) with coefficients
in $W$.  Then $f$~is a polynomial map of degree~$d$.
\end{lemma}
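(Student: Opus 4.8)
The plan is to reduce the "$k$-th power" hypothesis to the ordinary polynomial condition of Definition~\ref{def:m1} by exploiting the multiplicativity of Definition~\ref{eq:135} in the scalars together with a rational-substitution argument. First I would record the obvious consequence of the hypothesis: given $v_1,\dots,v_n\in V$, the function $f(\lambda_1^k v_1+\dots+\lambda_n^k v_n)$ equals some polynomial $g(\lambda_1,\dots,\lambda_n)$ of degree $\le kd$ with coefficients in $W$. My goal is to show this $g$ is actually a polynomial in the monomials $\lambda_1^k,\dots,\lambda_n^k$, i.e.\ that $g(\lambda_1,\dots,\lambda_n)=h(\lambda_1^k,\dots,\lambda_n^k)$ for some polynomial $h$ of degree $\le d$; evaluating on $k$-th roots then shows $f(\mu_1 v_1+\dots+\mu_n v_n)=h(\mu_1,\dots,\mu_n)$ for all $\mu_i\ge 0$, and a separate sign/extension step handles all real $\mu_i$.

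The key algebraic step is the invariance argument. For a primitive $k$-th root of unity $\zeta$ (here is the one place characteristic zero, or more precisely the presence of $\zeta\in\RR$ only when $k\le 2$, forces the stated restriction—so over $\RR$ one argues instead with the substitution $\lambda_i\mapsto\zeta_i\lambda_i$ only after complexifying $W$, or one uses that replacing $v_i$ by itself and $\lambda_i$ by $-\lambda_i$ already gives invariance when $k$ is even, and when $k$ is odd the map $\lambda_i\mapsto\lambda_i$ on $k$-th powers is a bijection of $\RR$, which is cleaner). Concretely: since $(\zeta\lambda_i)^k=\lambda_i^k$, the hypothesis gives $g(\lambda_1,\dots,\zeta\lambda_i,\dots,\lambda_n)=g(\lambda_1,\dots,\lambda_n)$ as polynomial identities, and a polynomial invariant under each coordinate scaling by a primitive $k$-th root of unity is a polynomial in the $k$-th powers of the coordinates. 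This is the standard fact that the invariants of $(\ZZ/k)^n$ acting by scaling are generated by $\lambda_1^k,\dots,\lambda_n^k$. Writing $g=h(\lambda_1^k,\dots,\lambda_n^k)$, the degree bound $\deg g\le kd$ forces $\deg h\le d$.

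Having produced, for each finite tuple $(v_1,\dots,v_n)$, a polynomial $h$ of degree $\le d$ with $f(\lambda_1 v_1+\dots+\lambda_n v_n)=h(\lambda_1,\dots,\lambda_n)$ for all $\lambda_i\ge 0$, I would next upgrade this from nonnegative scalars to all real scalars. One clean way: apply the already-established nonnegative statement to the enlarged list $v_1,-v_1,\dots,v_n,-v_n$, which lets one write any real linear combination of the $v_i$ as a nonnegative combination of the $2n$ vectors; the resulting polynomial in $2n$ nonnegative variables, when restricted along $\mu_i\mapsto(\max(\mu_i,0),\max(-\mu_i,0))$, agrees with a single polynomial of degree $\le d$ in the $\mu_i$ on each orthant, hence globally since polynomials agreeing on a set of positive measure (or Zariski-dense set) coincide. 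This verifies Definition~\ref{def:m1} and shows $f$ is polynomial of degree $d$.

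I expect the main obstacle to be the first reduction step—extracting that $g$ depends only on the $k$-th powers—handled correctly over $\RR$ rather than over an algebraically closed field, since a primitive $k$-th root of unity is not real for $k\ge 3$. The fix (alluded to above) is to note that the hypothesis only constrains $f$ through the substitution $\lambda_i\mapsto\lambda_i^k$, and the image of $t\mapsto t^k$ on $\RR$ is all of $\RR$ when $k$ is odd and is $[0,\infty)$ when $k$ is even; in either case, once $g(\lambda_1,\dots,\lambda_n)$ is known to agree on a Zariski-dense subset with a polynomial in the $\lambda_i^k$ (which one can obtain by tensoring $W$ with $\CC$, running the root-of-unity argument there, and then observing the resulting $h$ has real coefficients because $g$ does), the identity persists over $\RR$. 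The remaining steps are routine polynomial bookkeeping and the degree count.
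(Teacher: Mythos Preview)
There is a genuine gap at your ``key algebraic step.''  The invariance
$g(\lambda_1,\dots,\zeta\lambda_i,\dots,\lambda_n)=g(\lambda_1,\dots,\lambda_n)$
does \emph{not} follow from the hypothesis, even after complexifying $W$.
The problem is on the domain side, not the codomain: $f$ is only a
set-map $V\to W$, so there is no way to evaluate it at the complex input
$(\zeta\lambda_1)^k v_1+\cdots$, and hence no way to relate
$g(\zeta\lambda_1,\dots)$ to a value of $f$.  Concretely, the single-tuple
hypothesis alone does not force $g$ to depend only on the $\lambda_i^k$:
with $k=3$, $V=W=\RR$, $v=1$, and $g(\lambda)=\lambda$, one has
$f(\lambda^3)=\lambda$ a perfectly good polynomial in $\lambda$, yet $g$ is
not a polynomial in $\lambda^3$ and $f(\mu)=\mu^{1/3}$ is not polynomial.
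What rules this $f$ out is the hypothesis applied to the \emph{pair}
$(v,v)$, since $(\lambda_1^3+\lambda_2^3)^{1/3}$ is not polynomial.  So the
divisibility of the exponents by $k$ must be extracted from the hypothesis
for a \emph{larger} tuple, not from a root-of-unity symmetry of a single
$g$.

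The paper's proof does exactly this.  After reducing to $f:\RR^m\to\RR$,
it applies the hypothesis both to the standard basis $e_1,\dots,e_m$ and to
the doubled list $e_1,\dots,e_m,-e_1,\dots,-e_m$, obtaining two polynomial
expressions: one for $f(\lambda_1^k,\dots,\lambda_m^k)$ and one for
$f(\mu_1^k-\nu_1^k,\dots,\mu_m^k-\nu_m^k)$.  On the region
$|\mu_i|\ge|\nu_i|$ one may substitute
$\lambda_i=(\mu_i^k-\nu_i^k)^{1/k}$ and compare; separating by
multi-homogeneity in each pair $(\mu_i,\nu_i)$ and invoking Taylor's
theorem forces every exponent $r_i$ in the first expansion to be divisible
by $k$.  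Your later idea of doubling the list $v_1,-v_1,\dots,v_n,-v_n$ is
thus the essential input, but it must be used \emph{at} the key step
(to prove divisibility of exponents), not merely afterwards to pass from
nonnegative to arbitrary scalars.
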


  \begin{proof}
 By choosing bases of~$V$ and~$W$ we immediately reduce to the case
$f\:\RR^m\to\RR$ for $m=\dim V$.  The hypothesis implies that 
  \begin{equation}\label{eq:149}
     f(\lambda _1^k,\dots ,\lambda _m^k) = \sum\limits_{r_1,\dots
     ,r_m}a_{r_1\cdots r_m}\,\lambda _1^{r_1}\cdots \lambda _m^{r_m} 
  \end{equation}
is a polynomial of degree~$kd$ in~$\lambda _1,\dots ,\lambda _m\in \RR$ and 
  \begin{equation}\label{eq:150}
     f(\mu _1^k-\nu _1^k,\dots ,\mu _m^k-\nu _m^k) =
     \sum\limits_{\substack{p_1,\dots ,p_m\\q_1,\dots ,q_m}}b_{p_1\cdots
     p_mq_1\cdots q_m}\,\mu _1^{p_1}\cdots \mu 
     _m^{p_m}\nu _1^{q_1}\cdots \nu _m^{q_m} 
  \end{equation}
is a polynomial of degree~$kd$ in~$\mu _1,\dots ,\mu _m,\nu _1,\dots ,\nu
_m\in \RR$.  For~$|\mu _i|\ge|\nu _i|$ set $\lambda _i=(\mu _i^k-\nu
_i^k)^{1/k}$.  Comparing~\eqref{eq:149} and~\eqref{eq:150} we find for
each~$r_1,\dots ,r_m$ that
  \begin{equation}\label{eq:151}
     a_{r_1\cdots r_m}(\mu _1^k-\nu _1^k)^{r_1/k}\cdots(\mu _m^k-\nu
     _m^k)^{r_m/k} = \sum\limits_{p_i+q_i=r_i}b_{p_1\cdots p_mq_1\cdots
     q_m}\mu _1^{p_1}\cdots \mu _m^{p_m}\nu _1^{q_1}\cdots \nu _m^{q_m} .
  \end{equation}
If $a_{r_1\cdots r_m}\not= 0$ we conclude that 
  \begin{equation}\label{eq:152}
     (\mu _1^k-\nu _1^k)^{r_1/k}\cdots(\mu _m^k-\nu _m^k)^{r_m/k} 
  \end{equation}
is a homogeneous polynomial in $\mu _1,\dots ,\mu _m,\nu _1,\dots ,\nu _m$ on
the region where~$|\mu _i|\ge|\nu _i|$ for all~$i$.  Apply Taylor's theorem
at any point of that region to deduce that each of $r_1,\dots ,r_m$ is
divisible by~$k$ (else \eqref{eq:152}~is not a polynomial).  It follows
from~\eqref{eq:151} that $b_{p_1\cdots p_mq_1\cdots q_m}$~vanishes unless
each of $p_1,\dots ,p_m,q_1,\dots ,q_m$ is divisible by~$k$.  If $k$~is odd,
the vanishing now implies from~\eqref{eq:149} that $f(x_1,\dots ,x_m)$~is a
polynomial of degree~$d$ in~$x_1,\dots ,x_m$, but if $k$~is even we only
deduce this on the region where all~$x_i\ge0$.  From~\eqref{eq:150} we see
that $f(y_1-z_1,\dots ,y_m-z_m)$ is (i)~a polynomial of degree~$d$
in~$y_1,\dots ,y_m,z_1,\dots ,z_m$ and (ii)~a polynomial of degree~$d$ in
$y_1-z_1,\dots ,y_m-z_m$ in the region where all~$y_i\ge z_i$.  From these
two facts it follows that (ii)~holds for all~$y_1,\dots ,y_m,z_1,\dots z_m$,
whence $f(x_1,\dots ,x_m)$~is a polynomial of degree~$d$ in~$x_1,\dots
,x_m$. 
  \end{proof}

  \begin{definition}[]\label{thm:78}
  A polynomial functor
  \begin{equation}\label{eq:136}
      F=\bigoplus_{i\ge 0} F_{i} 
  \end{equation}
is {\em reduced} if $F_{0}=0$.
  \end{definition}

We now prove the main result of this appendix.

\begin{theorem}
\label{thm:mm1}
 Let $F$ and $G$ be polynomial functors over~$\RR$, and suppose that $F$ is
reduced.   Any set-theoretic transformation $T:F\to G$ is polynomial.  If
$G$~is polynomial of degree~$d$, then $T$~is polynomial of degree~$d$.
\end{theorem}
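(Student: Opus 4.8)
The plan is to verify Definition~\ref{def:m1} for the map $T_V\:F(V)\to G(V)$ directly, for each fixed $V$; naturality of $T$ then assembles these into a polynomial transformation, and the first assertion follows because a polynomial functor $G$ automatically has some finite degree $d$. To get $T_V$ polynomial I would apply Lemma~\ref{thm:m2} with a single integer $N$ chosen once and for all, so the thing to establish is: for every finite set $z_1,\dots,z_p\in F(V)$, the map $\lambda\mapsto T_V(\sum_l\lambda_l^{N}z_l)$ is polynomial of degree $\le Nd$. Since $F$ is polynomial it splits as $F=\bigoplus_{i}F_i$, with no $i=0$ summand ($F$ is reduced) and $i\le e:=\deg F$; decomposing each $z_l$ into its homogeneous components $(z_l)_i$ and afterwards specializing the extra parameters, it is enough to treat the case that the $z_l$ are \emph{homogeneous}.

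So fix homogeneous $u_1,\dots,u_m\in F(V)$ with $u_j\in F_{i_j}(V)$, $1\le i_j\le e$. The key construction is a universal element realizing arbitrary linear combinations. Write $V^m=V\oplus\cdots\oplus V$ ($m$ copies), let $\iota_j\:V\hookrightarrow V^m$ be the $j$-th inclusion and $p_j\:V^m\to V$ the $j$-th projection, and for $\lambda=(\lambda_1,\dots,\lambda_m)$ set $D_\lambda=\sum_j\lambda_j p_j\:V^m\to V$, so that $D_\lambda\circ\iota_j=\lambda_j\id_V$. Put $w=\sum_j F(\iota_j)(u_j)\in F(V^m)$, a fixed element. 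Using linearity and functoriality of $F$, together with the fact that $F(\lambda_j\id_V)$ acts as $\lambda_j^{i_j}$ on $F_{i_j}(V)$, one computes $F(D_\lambda)(w)=\sum_j F(D_\lambda\circ\iota_j)(u_j)=\sum_j\lambda_j^{i_j}u_j$. Naturality of $T$ at the morphism $D_\lambda$ gives $T_V\bigl(\sum_j\lambda_j^{i_j}u_j\bigr)=G(D_\lambda)\bigl(T_{V^m}(w)\bigr)$. Now a standard consequence of the definition of a polynomial functor is that for fixed linear maps $g_j\:A\to B$ the assignment $\lambda\mapsto G\bigl(\sum_j\lambda_j g_j\bigr)$ is a polynomial of degree $\le d$ with coefficients in $\Hom\bigl(G(A),G(B)\bigr)$ — realize each $g_j$ as the square-zero endomorphism $(a,b)\mapsto(0,g_j(a))$ of $A\oplus B$, apply the definition there, and pre- and post-compose with $G$ applied to the structure maps $A\hookrightarrow A\oplus B\twoheadrightarrow B$. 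Taking $g_j=p_j$ and evaluating the coefficient endomorphisms on the fixed vector $T_{V^m}(w)$, we conclude that $\lambda\mapsto T_V\bigl(\sum_j\lambda_j^{i_j}u_j\bigr)$ is a polynomial of degree $\le d$ in $\lambda_1,\dots,\lambda_m$.

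To erase the exponents $i_j$, take $N=\operatorname{lcm}(1,2,\dots,e)$, a fixed integer depending only on $F$. Because $F$ is reduced we have $i_j\ge1$, hence $i_j\mid N$ and $N/i_j\le N$. Substituting $\lambda_j\mapsto\mu_j^{N/i_j}$ (a legitimate substitution, since each $N/i_j$ is a positive integer) shows that $\mu\mapsto T_V\bigl(\sum_j\mu_j^{N}u_j\bigr)$ is a polynomial of degree $\le d\cdot\max_j(N/i_j)\le Nd$ in $\mu$. Running this through the homogeneous pieces of general elements $z_l$ and specializing the auxiliary variables, the hypothesis of Lemma~\ref{thm:m2} is met with this fixed $N$, and the lemma yields that $T_V$ is a polynomial map of degree $\le d$, completing the proof.

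The conceptual content is concentrated in the universal element $w$ of the second paragraph — essentially a cross-effect construction — and its compatibility with the homogeneity grading of $F$; everything else is degree bookkeeping, and that bookkeeping is where I expect the only real care to be needed. Two points deserve attention: reducedness of $F$ is genuinely essential (a constant summand of $F$ would falsify the statement) and it is used exactly in the inequality $N/i_j\le N$ and in forming the substitution $\lambda_j\mapsto\mu_j^{N/i_j}$; and the integer $N$ handed to Lemma~\ref{thm:m2} must be chosen uniformly, as $\operatorname{lcm}(1,\dots,e)$, rather than depending on the particular finite set of vectors under test. One could alternatively invoke Proposition~\ref{thm:m3} to cut the number of auxiliary variables down to at most $d$ before applying Lemma~\ref{thm:m2}, which streamlines the degree estimate.
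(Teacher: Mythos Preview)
Your proof is correct and follows essentially the same route as the paper: reduce to homogeneous $u_j\in F_{i_j}(V)$, build the universal element $w=\sum_j F(\iota_j)(u_j)\in F(V^m)$, use naturality of~$T$ along a scaled sum map $V^m\to V$ to express $T_V(\sum_j\lambda_j^{i_j}u_j)$ as $G$ of a linear map applied to the fixed vector $T_{V^m}(w)$, then substitute $\lambda_j\mapsto\mu_j^{N/i_j}$ and invoke Lemma~\ref{thm:m2}. The only packaging difference is that the paper factors your $D_\lambda$ as $\Sigma\circ f$ with $f=\sum_j\lambda_j^{k/k_j}\epsilon_j\in\End(V^n)$ and $\Sigma$ the sum map, which lets it apply the definition of polynomial functor directly to an endomorphism and avoids your embedding trick for $G(\sum_j\lambda_j g_j)$ with $g_j\:A\to B$; you are also a bit more explicit than the paper about choosing $N=\operatorname{lcm}(1,\dots,e)$ uniformly before quoting Lemma~\ref{thm:m2}.
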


\noindent
 A \emph{set-theoretic} transformation of functors $T\:F\to G$ is a natural
transformation of the underlying set-valued functors.  In other words, the
map $T(V)\:F(V)\to G(V)$, $V\in \Vect$, is a map of sets (which is not
assumed to be linear).

\begin{proof}
Let $V$ be a vector space, $v_{1},\dots, v_{n}\in F(V)$.  We wish to show
that  
  \begin{equation}\label{eq:137}
      T_{V}(\lambda_{1} v_{1} + \dots + \lambda_{n}v_{n}) \in G(V) 
  \end{equation}
is a polynomial function of $\lambda_{i}\in k$.  For each $i$, write
  \begin{equation}\label{eq:138}
      v_{i} = \sum v_{i}^{(j)}, 
  \end{equation}
with $v_{i}^{(j)}\in F_{j}(V)$.   Then since 
  \begin{equation}\label{eq:139}
      \sum \lambda_{i} v_{i} = \sum \lambda\mstrut _{i}v_{i}^{(j)} 
  \end{equation}
we might as well assume from the outset that each $v_{i}$ is homogeneous in
the sense that $v_{i}\in F_{k_{i}}(V)$ for some $k_{i}$.  Since $F$ is
reduced, none of the $k_{i}$ is zero.  We may therefore choose an integer $k$
which is divisible by all the $k_{i}$.  We will show that
  \begin{equation}\label{eq:140}
      T_{V}(\lambda_{1}^{k} v_{1} + \dots + \lambda_{n}^{k}v_{n}) \in G(V) 
  \end{equation}
is a polynomial function of the $\lambda_{i}$, of degree~$kd$ if $G$~has
degree~$d$.  By Lemma~\ref{thm:m2} this implies that $T_{V}$ is polynomial of
degree~$d$. 

Consider the following commutative diagram, in which $\Sigma:V^n\to V$ is
the sum map:
  \begin{equation}\label{eq:141}
  \begin{split}
      \xymatrix{ F(V^{n}) \ar[r]^-{F(\Sigma)}\ar[d]_{T_{V^{n}}} & F(V)
     \ar[d]^{T_{V}}\\ G(V^{n}) \ar[r]^-{G(\Sigma)} & G(V) .} 
  \end{split}
  \end{equation}
Let $j_{i}:V\to V^n$ be the inclusion of the $i^{\textnormal{th}}$~subspace
$0\oplus \cdots\oplus V\oplus \cdots\oplus 0\subset V^n$.  Under the top
arrow ~$F(\Sigma )$ the vector
  \begin{equation}\label{eq:142}
      \lambda_{1}^{k}F(j_{1})(v_{1})+\dots +
     \lambda_{n}^{k}F(j_{n})(v_{n}) \in F(V^{n}) 
  \end{equation}
maps to $ \lambda^{k}_1v_{1} + \dots +\lambda^{k}_nv_{n}\in F(V)$.  By
commutativity of the diagram, it therefore suffices to show that
  \begin{equation}\label{eq:143}
      T_{V^{n}}\bigl(\lambda_{1}^{k}F(j_{1})(v_{1})+\dots +
     \lambda_{n}^{k}F(j_{n})(v_{n})\bigr) \in G(V^n)
  \end{equation}
is a polynomial function of the $\lambda_{i}$ with coefficients in~$G(V^n)$.

Let $f:V^{n}\to V^{n}$ be the map
  \begin{equation}\label{eq:144}
      f= \lambda_{1}^{k/k_{1}}\epsilon_{1}+\dots +
     \lambda_{n}^{k/k_{n}}\epsilon_{n},
  \end{equation}
where $\epsilon _i(v_1,\dots ,v_n) = (0,\dots ,v_i,\dots ,0)$.  Since $G$ is
polynomial, it follows that that $G(f)$ is a polynomial in
$\lambda_{i}^{k/k_{i}}$ (and hence in the $\lambda_{i}$) with coefficients in
$\End\bigl(G(V^{n})\bigr)$.  Now follow the element
  \begin{equation}\label{eq:145}
      x = F(j_{1})(v_{1})+\dots + F(j_{n})(v_{n})\in F(V^{n}) 
  \end{equation}
around the commutative diagram
  \begin{equation}\label{eq:146}
  \begin{split}
      \xymatrix{ F(V^{n}) \ar[r]^{F(f)}\ar[d]_{T_{V^{n}}} & F(V^{n})
     \ar[d]^{T_{V^{n}}} \\ G(V^{n}) \ar[r]_{G(f)} & G(V^{n}), } 
  \end{split}
  \end{equation}
starting in the upper left corner.   It is sent by the top horizontal
arrow to  
  \begin{equation}\label{eq:147}
      \lambda_{1}^{k}F(j_{1})(v_{1}) + \dots +
     \lambda_{n}^{k}F(j_{n})(v_{n})\in F(V^n)
  \end{equation}
which in turn is sent by the right vertical arrow to~\eqref{eq:143}.  Under
the left vertical arrow $x$ is sent to $T_{V^{n}}(x)$ which, since $G(f)$ is
a polynomial endomorphism (in the $\lambda_{i}$), is sent by the bottom
horizontal arrow to a polynomial in the $\lambda_{i}$ with coefficients in
$G(V^{n})$.  If $G$~is polynomial of degree~$d$, then $G(f)$~is a polynomial
in $\lambda _1^{k/k_1},\dots ,\lambda _n^{k/k_n}$ of degree~$d$, so a
polynomial in $\lambda _1,\dots ,\lambda _n$ of degree~$kd$.  This completes
the proof.
\end{proof}

 \bigskip\bigskip
\providecommand{\bysame}{\leavevmode\hbox to3em{\hrulefill}\thinspace}
\providecommand{\MR}{\relax\ifhmode\unskip\space\fi MR }
% \MRhref is called by the amsart/book/proc definition of \MR.
\providecommand{\MRhref}[2]{%
  \href{http://www.ams.org/mathscinet-getitem?mr=#1}{#2}
}
\providecommand{\href}[2]{#2}

  \end{document}